\newtheorem{definition}{Definition}[section]
\newtheorem{proposition}[definition]{Proposition}
\newtheorem{theorem}[definition]{Theorem}
\newtheorem{lemma}[definition]{Lemma}
\newtheorem{corollary}[definition]{Corollary}
\newtheorem{remark}[definition]{Remark}
\theoremstyle{plain}
\newtheorem*{Problem A}%[definition]
{Problem A}
\newtheorem*{Problem B}%[definition]
{Problem B}
\newtheorem*{Problem C}%[definition]
{Problem C}
\theoremstyle{definition}
\newcommand\R{\mathbb{R}}
\newcommand\N{\mathbb{N}}
\newcommand\wh{\widehat}
\newcommand{\norm}[1]{ \left\lVert#1\right\rVert}
\newcommand{\mathbbm}[1]{\text{\usefont{U}{bbm}{m}{n}#1}}
\author{}
\date{}
\begin{document}
\title{%1. The extremal  boundedness property of the Fourier operator
%\\Fourier basis is extremal (has the worst properties)
%An extremal property of the Fourier operator and applications 
%\\Fourier transform is an extremal bounded operator
%Is  Fourier transform the worst linear bounded operator?
%\\2.
%Extremality of the Fourier transform as a bounded operator
%\\
%Extremality of the Fourier transform   in a class of bounded operators
%\\Weighted Fourier inequalities by transference of boundedness
%\\The Fourier testing condition for linear operators
%\\
The Fourier transform is an extremizer of a class of bounded operators }

 %AVERAGED INTEGRAL TRANSFORMS

\thanks{M. Saucedo is supported by  the Spanish Ministry of Universities through the FPU contract FPU21/04230.
 S. Tikhonov is supported
by PID2023-150984NB-I00, 2021 SGR 00087. This work is supported by
the CERCA Programme of the Generalitat de Catalunya and the Severo Ochoa and Mar\'ia de Maeztu
Program for Centers and Units of Excellence in R\&D (CEX2020-001084-M)}

\author{Miquel Saucedo}
\address{M.  Saucedo,  Centre de Recerca Matemàtica\\
Campus de Bellaterra, Edifici C
08193 Bellaterra (Barcelona), Spain}
\email{miquelsaucedo98@gmail.com }

\author{Sergey Tikhonov}
\address{S. Tikhonov, Centre de Recerca Matem\`{a}tica\\
Campus de Bellaterra, Edifici C
08193 Bellaterra (Barcelona), Spain;
ICREA, Pg. Lluís Companys 23, 08010 Barcelona, Spain,
 and Universitat Autònoma de Barcelona.}
\email{ stikhonov@crm.cat}

\subjclass[2010]{Primary  42B10, 42B35; Secondary 46E30.}
\keywords{Fourier transforms,
    Fourier inequalities,
    Weighted Lebesgue space,
    Rearrangements}

\begin{abstract}
We show that, for a natural class of \textit{rearrangement admissible} spaces $X$ and $Y$, the Fourier operator is bounded between $X$ and $Y$ if and only if any operator of joint strong type $(1,\infty; 2,2)$ is also bounded between $X$ and $Y$. 

By using this result, we fully characterize 
the weighted Fourier inequalities of the form 
$$\qquad\qquad\norm{\widehat{f}u}_q   \leq C %\left(\int_{\R^d} |v(x){f}(x)|^p  dx \right)^{1/p} =:C 
\norm{fv}_p,\quad 1\leq p\leq \infty,\,0<q\le \infty,$$
for radially monotone weights $(u,v)$.
%This answers a long-standing problem posed by Benedetto--Heinig--Muckenhoupt forty years ago.
%Such a characterization  was known  only in the case of $p\le q$.
%and   it was an open question in the case  $q<p$.
This answers a long-standing problem posed by Benedetto--Heinig, Jurkat--Sampson, and Muckenhoupt. In the case of $p\le q$,
such a characterization  has been known since the 1980s.

\end{abstract}

\maketitle

%\tableofcontents

\section{Introduction}
\subsection{Motivation}
We define the Fourier series of a summable sequence $(f(n))_{n\in \mathbb{Z}}$ by
\begin{equation}\label{sumF}
\widehat{f}(x)=\sum_{n\in \mathbb{Z}} f(n) e^{2 \pi i nx}, \quad 0\leq x \leq 1.
\end{equation} 
We begin by recalling the basic
 yet crucial 
 results for Fourier series, namely the triangle inequality and Parseval's theorem:
\begin{align}
\label{ineq:l1}
    &\|\widehat{f}\,\|_{L^\infty} \leq \norm{f}_{\ell^1},\\
    \label{ineq:l2}
    &\|\widehat{f}\,\|_{L^2}\leq \norm{f}_{\ell^ 2}.
\end{align}
The Hausdorff-Young inequality extends 
 these results to  %the whole range of exponents 
  $p\in(2,\infty)$:
%  The extension of these results to  the whole range of exponents $p\in(2,\infty)$ is known as the Hausdorff-Young inequality
\begin{align}
\label{ineq:hy}
    &\|\widehat{f}\,\|_{L^p}\leq \norm{f}_{\ell^{p'}}.
\end{align}
It is worth mentioning that, although the standard proof derives \eqref{ineq:hy} from \eqref{ineq:l1} and \eqref{ineq:l2} through interpolation, the original arguments by Young and Hausdorff were different, relying on particular properties of the Fourier coefficients.
First, Young \cite{young} established \eqref{ineq:hy} for even values of $p$ by combining inequality \eqref{ineq:l2} with the formula $(\widehat{f}\,)^2=\widehat{f*f}$; Hausdorff \cite{hausdorff} subsequently extended Young's result to all $p$ % real numbers 
 by a careful analysis of the extremizers of \eqref{ineq:hy}. The modern interpolative proof, due to M. Riesz \cite{riesz} and
 based solely on \eqref{ineq:l1} and \eqref{ineq:l2},
  appeared 15 years after Young's first result. 
 
Interestingly, a similar course of events took place in the development of the Hardy-Littlewood-Pitt inequality
\cite{hardylittlewood,pitt} 
\begin{align}
\label{ineq:pitt2}
\big\||x|^{-\lambda} \widehat{f}\,\big\|
%\norm{\widehat{f} x^{-\lambda}}
_{L^q}\lesssim \norm{ f(1+|n|)^\alpha}_{\ell^p},
\end{align}
where $\lambda=\frac 1p +\frac 1q+ \alpha -1$, $1<p\leq q<\infty$, and $0\leq \alpha<\frac{1}{p'}$.
 Hardy-Littlewood and Pitt originally established  \eqref{ineq:pitt2} through complex  arguments relying on  properties of the Fourier coefficients. 
 Decades later, Zygmund and Stein \cite{stein} discovered that \eqref{ineq:pitt2} could be derived from \eqref{ineq:l1} and \eqref{ineq:l2} using interpolation.

 Finally, we note that  an interpolation argument \cite{hunt}  refines 
 %  \cite{hunt} that the interpolative method can be refined further to yield the following improvement of 
 \eqref{ineq:hy} and 
 \eqref{ineq:pitt2}
 in the Lorentz space scale:
\begin{align}
\label{ineq:hylor}
    &\|\widehat{f}\,\|_{L^{p,q}}\lesssim \norm{f}_{\ell^{p',q}},\quad 1<p<2, \;1\leq q\leq \infty.
\end{align}

Nowadays, the well-established  approach for deriving estimates of the forms
\begin{align}
\label{ineq:pittgen}
\big\|u\widehat{f}\,\big\|
_{q}&\lesssim \norm{ f v}_{p}, \quad \text{ with $u,v^{-1}$ even, non-increasing weights;}\\
\label{eq:pittgenX}
\big\|\widehat{f} \,\big\|
_{X}&\lesssim \norm{ f }_{Y}, \quad \text{with rearrangement-invariant function spaces $X,Y$}
\end{align} 
is Calderón's interpolative method 
\cite{Calderon}. 
This technique has been applied to the problem at hand by Jodeit--Torchinsky and 
Benedetto--Heinig--Johnson, see the survey \cite{BHJFAA} and Subsection \ref{section:preliminary} for more details. Importantly, every known inequality of  type \eqref{ineq:pittgen} or \eqref{eq:pittgenX}, in particular  \eqref{ineq:pitt2}
and 
\eqref{ineq:hylor}, can be derived through this method. This fact means that, up to now, every established Fourier estimate of the  type \eqref{ineq:pittgen} or \eqref{eq:pittgenX} is equivalent to the more general facts
\begin{align}
\label{ineq:Tpittgen}
\norm{u T(f)}_{q}&\lesssim \norm{ f v}_{p}, \quad \text{ with $u,v^{-1}$ even, non-increasing weights;}\\
\label{eq:TpittgenX}
\norm{T(f)}_{X}&\lesssim \norm{ f }_{Y}, \quad \text{with $X,Y$ rearrangement-invariant function spaces, }
\end{align}
respectively; where $T$ is any operator of joint strong type $(1,\infty;2,2)$, that is, it satisfies the inequalities
\begin{align}
\label{ineq:Tl1}
    &\norm{T({f})}_{\infty} \lesssim \norm{f}_{1},\\
    \label{ineq:Tl2}
    &\norm{T({f})}_{2}\lesssim \norm{f}_{2}.
\end{align}
\subsection{Statement of the problem}
\label{statement}
In light of all these facts, it is natural to wonder whether {\it all} Fourier inequalities of  type \eqref{ineq:pittgen} or \eqref{eq:pittgenX} always follow from the basic inequalities \eqref{ineq:l1} and \eqref{ineq:l2}.
In other words, does the boundedness of the Fourier operator imply the same for any other operator? 
More precisely, we state
\begin{Problem A}
   Do inequalities \eqref{ineq:pittgen} and \eqref{eq:pittgenX} always imply
    \eqref{ineq:Tpittgen} and \eqref{eq:TpittgenX}, respectively, for every operator of joint strong type $(1,\infty;2,2)$?
\end{Problem A}

Several remarks are  in order here.
\\
1. It is clear that if either the rearrangement invariance condition or the monotonicity assumption on $u$ and $v$ is removed, one can easily find instances of Fourier inequalities that cannot be obtained from \eqref{ineq:l1} and \eqref{ineq:l2}. For example, setting $$v(n)=\begin{cases}
    1,\quad & n\in \{2^k:k \in \mathbb{N}\}\\
    |n|+1,\quad &\text{otherwise},
\end{cases} $$ from the properties of lacunary Fourier series it is immediate to deduce the inequality
$$\norm{\widehat{f}\,}_{L^4}\lesssim \norm{vf}_
{\ell^{2}}.$$ However, there are operators of joint strong type $(1,\infty;2,2)$, such as the "rearranged" Fourier series operator $$T(f)(x)=\sum_{k\in \mathbb{N}} f({2^k}) e^{2 \pi i k x},$$  for which the inequality
$$\norm{T(f)}_{L^4}\lesssim \norm{vf}_
{\ell^{2}}$$ fails.
\\
2. There exists (see \cite{nielsen} and \cite[Theorem 3.3]{nielsen2}) an orthonormal, bounded basis $(\psi_k)_{k=0}^\infty$ of $L^2$ which satisfies a stronger Hausdorff-Young inequality
\begin{equation}
    \norm{\sum_{k=0}^\infty f(k) \psi_k}_p \lesssim_{p,\varepsilon} \norm{f}_{\ell^{2-\varepsilon}}, \mbox{ for any } \varepsilon>0 \mbox{ and }1<p<\infty.
\end{equation}
The latter  is clearly false for the Fourier basis.
%Second, it is in general not true that if a given operator of joint strong type $(1,\infty;2,2)$ is bounded between a pair of rearrangement-invariant spaces $X$ and $Y$, then every operator of joint strong type $(1,\infty;2,2)$ must also be bounded between $X$ and $Y$.
Thus, a positive answer to Problem A would imply that the Fourier basis is in a certain sense extremal among all orthonormal bounded bases. %??FIND SOME INTERESTING EXAMPLE BESIDES HARDY AND LAPLACE. IDEALLY ORTHONORMAL BOUNDED BASIS DEMOCRATIC BASIS An example of an almost greedy uniformly bounded
\\
3. 
Olevskii  \cite[page 59]{olevskiibook} noted that 
"in a number of problems [related to almost everywhere convergence of Fourier expansions] the Haar
system has the best properties among all complete orthogonal systems. Roughly
speaking, if a Fourier expansion divergence phenomenon occurs with the Haar system,
then such a phenomenon is unavoidable for any complete orthonormal system (or basis)."
Likewise, % We can informally think of 
Problem A 
can be understood  as the question of  whether the Fourier basis has the worst boundedness properties among all  orthonormal, bounded systems.
\\
4. %Analogous considerations
The previous discussion also 
applies to the Fourier coefficient operator and the Fourier transform, namely, to the operators (which we also denote by $\widehat{f}$)
\begin{equation}
    f \mapsto \left(\widehat{f}(n)\right)_{n\in \mathbb{Z}},
\end{equation}
where
$$\widehat{f}(n)=\int_0^1 f(x) e^{-2 \pi i x n} dx;$$ and 

\begin{equation*}\label{intF}
\wh f(\xi)=\int_{\mathbb{R}^d} {f}(x) e^{-2\pi i \langle x, \xi \rangle} d x.
\end{equation*} 
\begin{comment}
For both of these operators, we reformulate Problem A as follows:
\begin{Problem A}[Coefficient/Transform]
Do the inequalities
   \begin{align}
\label{ineq:pittgen2}
\norm{u \widehat f}_{q}&\lesssim \norm{ f v}_{p}, \quad \text{ with $u,v^{-1}$ even, non-increasing weights, and}\\
\label{eq:pittgenX2}
\norm{\widehat f}_{X}&\lesssim \norm{ f }_{Y}, \quad \text{with $X,Y$ rearrangement-invariant function spaces }
\end{align} imply
   \begin{align}
\label{ineq:Tpittgen2}
\norm{u T(f)}_{q}&\lesssim \norm{ f v}_{p}, \quad \text{ with $u,v^{-1}$ even, non-increasing weights, and also}\\
\label{eq:TpittgenX2}
\norm{T(f)}_{X}&\lesssim \norm{ f }_{Y}, \quad \text{with $X,Y$ rearrangement-invariant function spaces, }
\end{align}respectively, for every operator $T$ of joint strong type $(1,\infty;2,2)$?
\end{Problem A}

\end{comment}
\\
5.
In several particular cases the answer to Problem A is known to be positive. Let us give some examples.

First, for Fourier coefficients,
% To give a few examples, we first mention that in the case of coefficients, 
 inequality \eqref{eq:pittgenX} implies \eqref{eq:TpittgenX}
when  $Y=L^{p,q}([0,1])$ with $p<2$.
This follows from the Hardy-Littlewood theorem for monotone coefficients \cite{hunt}.
 Second, from the Kahane-Katznelson-de Leeuw theorem \cite{KKdL},
 the same holds for any space $Y$ such that
 ${L^\infty\subset Y\subset L^2}$. 
 We emphasize  that the result is not known for $L^{2,q}$ when $q>2$.

%or ${L^\infty\subset Y\subset L^2}$, the first case follows from the Hardy-Littlewood Theorem for monotone Fourier coefficients \cite{hunt}; and the second one, from the Kahane-Katznelson-de Leeuw Theorem \cite{KKdL}. We note that the result is not known for $L^{2,q}$ when $q>2$.
Third, for Fourier transforms, Benedetto and Heinig  \cite{BHJFAA} proved that when $1\leq p\leq q\leq \infty$
inequality \eqref{ineq:pittgen} implies \eqref{ineq:Tpittgen}. More precisely, they showed that %, for these parameters,
\begin{align}
    \eqref{ineq:pittgen} \implies \norm{u \int_0^{x^{-1}} f}_q\lesssim \norm{fv}_p \implies \eqref{ineq:Tpittgen},
    \qquad 1\leq p\leq q\leq \infty.
\end{align}
We would also like to point out the recent article \cite{kerman}, where it is shown that whenever $X$ is an interpolation space between $L^\infty$ and $L^2$, the chain of implications
\begin{align}
    \eqref{eq:pittgenX} \implies   \norm{ \int_0^{x^{-1}} f}_X\lesssim \norm{f}_Y \implies \eqref{eq:TpittgenX}
\end{align} holds true. Importantly, the second implication is known to fail in general, see for instance \cite{sin}.

Our main result, Theorem \ref{theorem:mainabstract}, gives an affirmative answer to Problem A in its full generality.
The solution to Problem A proceeds through the detailed study of \eqref{ineq:pittgen} in the extreme case $q\leq 1<p=\infty$, for which a Maurey-type factorization result  is employed.

\subsection{Notation}
We normalize the Lebesgue measure on $\mathbb{R}^d$ so that the measure of the unit ball is $1$. We let $\mathbbm{1}_E$ denote the characteristic function of the set $E$. 
Also,
$\mathbbm{1}_E^{-1}$ stands for 
$1/\mathbbm{1}_E.$
For a cube $A$  in $\R^d$ we denote its sidelength by $|A|$.

For a function $h: \mathbb{R}^d \to \mathbb{C}$ we denote its one-dimensional symmetric non-increasing rearrangement by $h^*$ and its multidimensional symmetric non-increasing rearrangement by $h^\circ$, i.e., the unique radial and non-increasing functions 
defined on $\mathbb{R}$ and $\mathbb{R}^d,$ respectively,
which are equimeasurable with $|h|$. 
We also set $f^{**}(t)=\frac1t\int_0^ t f^*$.  Observe that since a sequence $F:\mathbb{Z}\to \mathbb{C}$ can be regarded as a function $F(t):=\sum_{n\in \mathbb{Z}} F(n)\mathbbm{1}(t)_{[n-\frac 12, n+ \frac 12]}$, all these notions extend naturally to sequences.
%and  $a_n^{**}(t)=\frac1n\sum_{j=1}^ n a_j^* $, respectively

For a radial function $h$,
we denote its radial part  by $h_0$, that is, $h(x)=h_0(|x|)$ for $x\in\mathbb{R}^d$. 
We observe that 
$h^*(x)=(h^\circ)_0(|x|^{\frac{1}{d}})$ for $x\in \mathbb{R}^d$.

We 
say that a function $h$  is radial monotone if it is radial and $h_0$ is monotone.
Thus, if $h$ is radial and non-increasing, $h^*(x)=h_0(|x|^{\frac{1}{d}})$ for $x\in \mathbb{R}$  and $h^\circ(x)=h(x)$ for $x\in \mathbb{R}^d$. We also define the non-decreasing symmetric rearrangement of $h$, $h_*$,  by $1/h_* = (1/h)^*$.

We use the following special notation for powers of rearrangements:
\begin{equation*}
    f^{*,p}(x)
:=(f^{*}(x))^p,\quad f^{\circ,p}(x)
:=(f^{\circ}(x))^p,\quad f_{*}^p(x)
:=(f_{*}(x))^p.
\end{equation*}
For a weight function $v$ satisfying $0\leq v\leq \infty$,
%$\norm{fv}_p:=\Big(\int |fu|^p\Big)^\frac1p$ %is defined in \eqref{ineq:pittgen} 
 %and
 we let $L^p(v):=\Big\{f: \norm{fv}_p=\Big(\int |fv|^p\Big)^\frac1p<\infty\Big\}$.
As usual, we understand $\Big(\int_I |fv|^p\Big)^\frac1p$ for $p=\infty$
as $\sup_I |fv|.$

For  $0<p,q\le\infty$, we define $p', r$, and $p^{\sharp}$ through the relations 
$$\frac1p+\frac1{p'}=1,\quad\frac1r:= \frac1q-\frac1{p},\quad\mbox{ and}\quad \frac{1}{p^{\sharp}}:= \left|\frac12-\frac1{p}\right|.
$$

We  use the symbol $F \lesssim G$  to mean that $F \le K G$ with a constant $K=K(d)$ that may change from
line to line.
If the constant $K$ depends on a given parameter $\lambda$ other than the dimension, 
then we write
$F\lesssim _{\lambda} G$.
The symbol $F \approx G$ (similarly, $F \approx_\lambda G$) means that both $F \lesssim G$ and $G \lesssim F$ hold. 
\subsection{Main results}
Before stating our main results, we give some definitions.
\begin{definition}
    Let $\norm{\cdot}_X$ be a non-negative functional defined on functions or sequences.
    We say that $f\in X$ if $\norm{f}_{X}<\infty$.
    We define its associate functional $\norm{\cdot}_{X'}$ by
 \begin{equation*}
        \norm{f}_{X'}:=\sup \left\{\int _{\mathbb{R}^d}\left | fg \right| : \norm{g}_X\leq 1\right\}
    \end{equation*} in the function case, and
 \begin{equation*}
        \norm{f}_{X'}:=\sup \left\{\sum_{n\in \mathbb{Z}} \left | f(n)g(n) \right| : \norm{g}_X\leq 1\right\}
    \end{equation*} in the sequence case.
\end{definition}

\begin{definition} We say that $\norm{\cdot}_X$ is left-admissible if the following properties hold:
\begin{enumerate}[label=(\roman*)]\item  for any $F \in X''$, $\norm{F}_X= \norm{F}_{X''};$

    \item  for any $h\in X'$, $\norm{h^\circ}_{X'}\leq \norm{h}_{X'}.$
\end{enumerate}
   
\end{definition}

\begin{definition} We say that $\norm{\cdot}_Y$ is right-admissible if the following properties hold:
\begin{enumerate}[label=(\roman*)]

\item  for any $G\in Y$ and $m\in L ^\infty$, $\norm{m G}_Y \leq \norm{G}_Y \norm{m}_\infty;$

    \item for any $G\in Y$, $\norm{G^\circ}_Y \leq \norm{G}_Y.$

\end{enumerate}
    
\end{definition}

\begin{remark}
\begin{enumerate}[label=(\roman*)]
%\item For $q\geq 1 $, if we set $\norm{F}_X=\norm{uF}_q$, then  $\norm{h}_{X'}=\norm{u^{-1}h}_{q'}$.
    \item Examples of left-admissible $\norm{\cdot}_X$ include 
$\norm{F}_X=\norm{Fu}_q$ for $1\leq q\leq  \infty$ and $u$ radial, non-increasing { (in this case $\norm{h}_{X'}=\norm{u^{-1}h}_{q'}$) }and also $\norm{F}_X=\rho(F)$ for  a rearrangement-invariant function space norm $\rho$.
\item Examples of right-admissible $\norm{\cdot}_Y$ include 
$\norm{G}_Y=\norm{Gv}_p$ for $0< q \leq \infty$ and $v$ radial, non-decreasing; $\norm{G}_Y=\norm{G^\circ v}_p$ for  $0< q \leq \infty$ and any $v$; and also $\norm{G}_Y=\rho(G)$  for  a rearrangement-invariant function space norm  $\rho$.
\item Under the additional condition that
$\norm{\cdot}_Y= \norm{\cdot}_{Y''}$, the associates of left- and right-admissible spaces are right- and left-admissible, respectively.
\item In the definitions of left- and right-admissible,
the symbols
$\leq$ and $=$ can be replaced by $\lesssim$ and $\approx$ at the expense of 
 potentially increasing 
 the constants in Theorem \ref{theorem:mainabstract}.
\end{enumerate}

\end{remark}

We are now in  a position to state  our main result.
\begin{theorem} 
\label{theorem:mainabstract}Let $\norm{\cdot}_X $ and $\norm{\cdot}_Y$ be left- and right-admissible, respectively, and $0< \beta\leq 1$.

   Then, the following are equivalent$:$
   \begin{enumerate}[label=(\roman*)] 
        \item the inequality \begin{equation}
   \label{ineq:pittXY}
       \norm{|\widehat{f}|^\beta}_X^{\frac1\beta} \leq C_1 \norm{ f}_Y
          \end{equation}
          holds for any $f\in L^1;$
 
   \item the inequality
   
          \begin{equation}
              \label{ineq:Tfxy}
              \norm{|T(f)|^\beta}_X^{\frac1\beta}\leq C_2 \norm{f}_Y
          \end{equation} holds for any $f\in L^1+L^2$ and  sublinear operator $T$ of joint strong type $(1,\infty;2,2)$.
          \end{enumerate}
          Moreover, the optimal constants satisfy $C_1 \approx_{\beta} C_2$.
\end{theorem}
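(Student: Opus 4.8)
The plan is as follows. The implication (ii) $\Rightarrow$ (i) is immediate, since by \eqref{ineq:l1} and \eqref{ineq:l2} the Fourier transform is itself a linear --- hence sublinear --- operator of joint strong type $(1,\infty;2,2)$, so \eqref{ineq:pittXY} is the instance of \eqref{ineq:Tfxy} in which $T$ is the Fourier transform; this gives $C_1\le C_2$. For the converse I would interpose a scalar inequality for a single \emph{model} operator $\mathcal{G}$. Every sublinear $T$ of joint strong type $(1,\infty;2,2)$ satisfies the elementary estimate $K(t,Tf;L^\infty,L^2)\lesssim K(t,f;L^1,L^2)$, and by Holmstedt's formula this is a relation purely between the rearrangements: it is equivalent to
\begin{equation}\label{eq:sketch-G}
\Big(\tfrac1s\int_0^s \big((Tf)^*\big)^2\Big)^{1/2}\ \lesssim\ \int_0^{1/s}f^*+s^{-1/2}\Big(\int_{1/s}^\infty (f^*)^2\Big)^{1/2},\qquad s>0 .
\end{equation}
Let $\mathcal{G}f$ denote the largest radial decreasing function whose profile satisfies \eqref{eq:sketch-G}; it is a concrete functional of $f^*$, combining a Hardy-type average with an $L^2$-tail term. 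The plan is then to prove
\[
\eqref{ineq:pittXY}\ \Longrightarrow\ \norm{|\mathcal{G}f|^\beta}_X^{1/\beta}\lesssim_\beta\norm{f}_Y\ \Longrightarrow\ \eqref{ineq:Tfxy}.
\]

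\emph{From the middle inequality to \eqref{ineq:Tfxy}.} Given $T$ sublinear of joint strong type $(1,\infty;2,2)$ and $f\in L^1+L^2$: for any splitting $f=f_0+f_1$ one has $|Tf|\le\min(|Tf|,|Tf_0|)+(|Tf|-|Tf_0|)_+$, the first summand being $\le|Tf_0|$ and the second $\le|Tf_1|$, so \eqref{ineq:Tl1} and \eqref{ineq:Tl2} give $K(t,Tf;L^\infty,L^2)\lesssim\norm{f_0}_1+t\norm{f_1}_2$, hence the displayed $K$-estimate and, by homogeneity of \eqref{eq:sketch-G} in $g$, the pointwise bound $(Tf)^*\lesssim(\mathcal{G}f)^*$. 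Since $X$ is left-admissible, $\norm{\cdot}_X$ agrees on $X''$ with the lattice norm $\norm{\cdot}_{X''}$, and the Hardy--Littlewood inequality together with property (ii) of left-admissibility gives $\norm{h}_X\le\norm{h^\circ}_X$ for every $h$; therefore $\norm{|Tf|^\beta}_X\le\norm{(|Tf|^\beta)^\circ}_X\lesssim_\beta\norm{|\mathcal{G}f|^\beta}_X$, as wanted. I would also reduce here, once and for all, to $f$ radial decreasing: $\mathcal{G}f$ depends only on $f^*$, and $\norm{f^\circ}_Y\le\norm{f}_Y$ by right-admissibility.

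\emph{From \eqref{ineq:pittXY} to the middle inequality.} Fix $f$ radial decreasing. Dualizing in $X$ via left-admissibility and Hardy--Littlewood,
\[
\norm{|\mathcal{G}f|^\beta}_X=\sup\Big\{\int(\mathcal{G}f)^{\circ,\beta}\,h:\ h\ge0\ \text{radial decreasing},\ \norm{h}_{X'}\le1\Big\},
\]
and \eqref{ineq:pittXY} says precisely that $\int(\widehat{F})^{\circ,\beta}\,h\le C_1^\beta\norm{F}_Y^\beta$ for all $F\in L^1$ and all such $h$. Hence it suffices to construct, for each radial decreasing $f$, a test function $F$ with $\norm{F}_Y\lesssim\norm{f}_Y$ and $(\widehat{F})^*\gtrsim(\mathcal{G}f)^*$ pointwise. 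I would split $(\mathcal{G}f)^*$ according to the two terms on the right of \eqref{eq:sketch-G}. For the Hardy-average part I would take $F=\sum_k a_k\,e^{2\pi i\eta_k\cdot x}\,\mathbbm{1}_{A_k}(x)$ with the $A_k$ pairwise disjoint and arranged so that $|F|\le f^\circ$ pointwise --- whence $\norm{F}_Y\le\norm{f^\circ}_Y\le\norm{f}_Y$, using that a right-admissible $\norm{\cdot}_Y$ satisfies $\norm{F}_Y=\norm{|F|}_Y$ and is monotone under $0\le G_1\le G_2$ (both from property (i), applied with unimodular and with $[0,1]$-valued multipliers $m$) --- while the frequencies $\eta_k$ are so separated that $\widehat{F}$ is a union of essentially disjoint plateaus, of height $\approx a_k|A_k|$ over frequency balls of volume $\approx|A_k|^{-1}$; a dyadic choice of the data then makes $(\widehat{F})^*$ dominate $\int_0^{1/s}f^*$. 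The role of property (i) of right-admissibility (invariance under the modulations $e^{2\pi i\eta_k\cdot x}$) is exactly to let the pieces of $F$ sit in far-apart frequency bands at no cost in $\norm{\cdot}_Y$. For the $L^2$-tail part I would argue similarly, but with each dyadic-scale piece of $F$ a randomized (Rudin--Shapiro type) sign pattern over a lattice inside a cube, so that $\widehat{F}$ is spread in frequency as evenly as the identity $\norm{\widehat{F}}_2=\norm{F}_2$ permits, and then stack such pieces over scales and separate them in frequency.

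The hard part will be this last construction --- the $L^2$-tail part --- in the extreme range of exponents: after raising to the power $\beta\le1$ this is the range in which $\norm{\cdot}_X$ is of type $L^q$ with $q\le1$ and $\norm{\cdot}_Y$ of type $L^\infty$, i.e.\ precisely the case $q\le1<p=\infty$ of \eqref{ineq:pittgen}. There the linearization and duality available when $p\le q$ fail, and I would instead obtain the required characterization of the Fourier inequality through a Maurey-type factorization of the $L^q$-valued map $f\mapsto u\,\widehat{f}$ through an $L^1$-space; the criterion it yields is exactly the one that also governs \eqref{ineq:Tfxy}. This extreme case is where the genuine difficulty lies; a general pair of left- and right-admissible functionals is then reduced to it using only the lattice, rearrangement and modulation properties from the definitions, together with $\norm{\cdot}_X=\norm{\cdot}_{X''}$. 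Tracking constants through the two implications gives $C_2\lesssim_\beta C_1$, and with $C_1\le C_2$ we conclude $C_1\approx_\beta C_2$.
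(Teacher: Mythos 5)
Your overall architecture is right and matches the paper: (ii)$\Rightarrow$(i) is immediate; the general $X,Y$ are reduced by duality (via $\norm{\cdot}_X=\norm{\cdot}_{X''}$ and the admissibility properties) to the scalar model $\norm{F^\beta}_X^{1/\beta}=\norm{Fu}_q$ with $q=\beta\le 1$ and $\norm{\cdot}_Y=\norm{\cdot\,v}_\infty$; and Maurey-type factorization is the correct tool for that extreme regime. But there is a genuine gap in the mechanism you propose for the scalar special case. You want to construct, for each radial non-increasing $f$, a \emph{single} test function $F$ with $\norm{F}_Y\lesssim\norm{f}_Y$ and $(\widehat F)^*\gtrsim(\mathcal{G}f)^*$ pointwise, where $\mathcal{G}f$ realizes the full Calder\'on envelope of $f$. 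The paper never establishes — and it is far from clear that one can establish — such a pointwise realization of the whole envelope by the Fourier transform of a single admissible function. What the paper proves instead is two \emph{necessary conditions} on the weight pair, each extracted from the Fourier inequality by testing it against a \emph{family} of randomized (Khinchin) test functions: the Hardy-type condition \eqref{ineq:necstep1}, from Theorem~\ref{lemma:necesnew}, and the tail condition \eqref{ineq:necstep2}, from Lemma~\ref{prop:nopacific}(ii) applied on a dyadic family of annuli and then re-randomized. These two inequalities are then fed into a careful decomposition argument (Step 2 of the proof): one splits $(0,\infty)$ into a region $Z$ on which the first (Hardy) term of the Holmstedt form of the $K$-functional dominates, so that $F^*(x)\lesssim\int_0^{1/x}v_*^{-1}$ and \eqref{ineq:necstep1} closes the estimate, and its complement $Z^c$, covered by the dyadic intervals indexed by $n\in I$, on which H\"older against $u^{*,q^\sharp}$ together with \eqref{ineq:necstep2} closes it. None of this combinatorics appears in your sketch, and the Maurey factorization by itself does not yield ``exactly the criterion that governs~\eqref{ineq:Tfxy}'': it yields only one of the two necessary conditions, after which the substantial remaining argument of the paper is still needed. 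Until you either prove your single-test-function domination claim or supply an equivalent of the paper's $Z/Z^c$ decomposition, the crux of the theorem is missing.

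A smaller inaccuracy: the Maurey factorization you need (Theorem~\ref{coro:usedfactor}) factors the $L^q$-valued map $f\mapsto u\widehat f$ as $L^p(v)\to L^2\to L^q$, i.e.\ through $L^2$ by means of a multiplication operator with $\norm{h^{-1}}_{q^\sharp}=1$, not through an $L^1$-space.
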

\begin{remark}
    The parameter $\beta$ allows us to apply our result, for example, to inequality \eqref{ineq:pittgen} with $q<1$. Indeed, even though $f \mapsto \norm{f u}_q$ is not left-admissible for $q<1$, we have  $\norm{fu}_q=\norm{f^qu^q}_1^{\frac1q}$ and $f \mapsto \norm{f u^q}_1$ is left-admissible. 
\end{remark}
Theorem \ref{theorem:mainabstract} has the following immediate consequence:
\begin{corollary} 
    \label{coro:mainbis}
    For a left-admissible space $X$, define
    the rearrangement-invariant right-admissible functional

    \begin{equation*}
        \norm{f}_{\widetilde{X}}:= \sup_T %\norm{T(f)}_{X}
         \norm{|T(f)|^\beta}_X^{\frac1\beta},
    \end{equation*} where the supremum is taken over all sublinear operators of joint strong type $(1, \infty;2,2)$. Then,  
for any right-admissible $Y$ for which the inequality
    \begin{equation}
        \norm{|\widehat{f}|^\beta\,}^{\frac1\beta}_X \leq \norm{f}_Y
    \end{equation} holds for any $f\in L^1$, we have   $$\norm{f}_{\widetilde{X}}\lesssim_\beta\norm{f}_Y$$
     for any $f\in L^1$.
\end{corollary}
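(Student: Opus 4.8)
The plan is to deduce the corollary almost directly from Theorem~\ref{theorem:mainabstract}, and then to verify separately that $\norm{\cdot}_{\widetilde{X}}$ really is a rearrangement-invariant right-admissible functional. For the quantitative part, I would observe that the hypothesis of the corollary — that $\norm{|\widehat{f}|^\beta}_X^{1/\beta}\le\norm{f}_Y$ for every $f\in L^1$ — is precisely inequality \eqref{ineq:pittXY} with $C_1=1$. Since $X$ is left-admissible and $Y$ is right-admissible, Theorem~\ref{theorem:mainabstract} applies verbatim and yields \eqref{ineq:Tfxy} with a constant $C_2$ satisfying $C_2\approx_\beta C_1=1$; that is, there is $c_\beta>0$, depending only on $\beta$ and the dimension, such that
\[
\norm{|T(f)|^\beta}_X^{1/\beta}\le c_\beta\,\norm{f}_Y
\]
for every $f\in L^1+L^2$ and every sublinear operator $T$ of joint strong type $(1,\infty;2,2)$. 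Restricting to $f\in L^1$ and taking the supremum over all such $T$ gives $\norm{f}_{\widetilde{X}}\le c_\beta\,\norm{f}_Y$, i.e.\ $\norm{f}_{\widetilde{X}}\lesssim_\beta\norm{f}_Y$, which is the claimed estimate. In particular this shows the supremum defining $\norm{f}_{\widetilde{X}}$ is finite whenever $\norm{f}_Y<\infty$.

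It remains to check that $\norm{\cdot}_{\widetilde{X}}$ is right-admissible and rearrangement-invariant, and the key is to isolate two elementary closure properties of the class $\mathcal{T}$ of sublinear operators of joint strong type $(1,\infty;2,2)$. First, if $T\in\mathcal{T}$ and $m\in L^\infty\setminus\{0\}$, then $S:=\norm{m}_\infty^{-1}\,T(m\,\cdot\,)$ again lies in $\mathcal{T}$: sublinearity is inherited from $T$, while $\norm{Sh}_\infty\le\norm{m}_\infty^{-1}\norm{mh}_1\le\norm{h}_1$ and $\norm{Sh}_2\le\norm{m}_\infty^{-1}\norm{mh}_2\le\norm{h}_2$. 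Since $|T(mG)|^\beta=\norm{m}_\infty^\beta\,|SG|^\beta$ and $\norm{\cdot}_X$ is homogeneous, taking the supremum over $T\in\mathcal{T}$ gives $\norm{mG}_{\widetilde{X}}\le\norm{m}_\infty\norm{G}_{\widetilde{X}}$, which is property~(i) of right-admissibility. Second, if $\sigma\colon\R^d\to\R^d$ is measure-preserving, then $h\mapsto h\circ\sigma$ is an isometry of $L^1$ and of $L^2$, so $T'h:=T(h\circ\sigma)$ defines a map $\mathcal{T}\to\mathcal{T}$. Choosing $\sigma$ so that $G^\circ=G\circ\sigma$ almost everywhere — a standard measure-preserving representation of the rearrangement, available because $\R^d$ is nonatomic and $\sigma$-finite — one gets $T(G^\circ)=T'(G)$ and hence $\norm{|T(G^\circ)|^\beta}_X^{1/\beta}=\norm{|T'(G)|^\beta}_X^{1/\beta}\le\norm{G}_{\widetilde{X}}$ for every $T\in\mathcal{T}$, so $\norm{G^\circ}_{\widetilde{X}}\le\norm{G}_{\widetilde{X}}$, which is property~(ii). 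Applying the same argument with $G$ replaced by an arbitrary $g$ equimeasurable with $G$, and then interchanging their roles, shows $\norm{g}_{\widetilde{X}}=\norm{G}_{\widetilde{X}}$; thus $\norm{\cdot}_{\widetilde{X}}$ is rearrangement-invariant.

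I do not expect a genuine obstacle: all the substance is contained in Theorem~\ref{theorem:mainabstract}, and what remains is bookkeeping. The two points that deserve a line of justification are the homogeneity of $\norm{\cdot}_X$ (immediate from left-admissibility, since $\norm{\cdot}_X=\norm{\cdot}_{X''}$ and associate functionals are homogeneous) and the existence of the measure-preserving change of variables realizing $G\mapsto G^\circ$, for which one cites the classical Ryff-type representation of a function as a measure-preserving rearrangement of its decreasing rearrangement. The one thing to stay careful about is that the constant $C_2$ furnished by Theorem~\ref{theorem:mainabstract} is uniform over $T\in\mathcal{T}$ — which is exactly the content of the assertion $C_1\approx_\beta C_2$ there — since it is this uniformity that makes the supremum in the definition of $\norm{\cdot}_{\widetilde{X}}$ compatible with the stated inequality.
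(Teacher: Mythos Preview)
Your proof is correct and matches the paper's approach: the paper presents the corollary as an ``immediate consequence'' of Theorem~\ref{theorem:mainabstract}, and your argument fills in exactly the expected details, including the verification (sketched identically in the paper's draft) that $\widetilde{X}$ is right-admissible via the substitution $T\mapsto T(m\,\cdot/\norm{m}_\infty)$. One small caveat: the standard Ryff representation gives $G=G^\circ\circ\sigma$ rather than $G^\circ=G\circ\sigma$; the cleanest way to get rearrangement invariance here is to invoke Lemma~\ref{lemma:kfunc} and note that $\norm{G}_{\widetilde X}\approx\sup_{F\prec G}\norm{F^{\circ,\beta}}_X^{1/\beta}$ depends on $G$ only through $G^*$.
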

\subsection{Applications}
As a consequence of Theorem \ref{theorem:mainabstract}  we are able to answer the following long-standing question:
\begin{Problem B}
   Characterize the radial non-increasing weights $u$ and $v^{-1}$ for which the inequality 
   $\big\|u\widehat{f}\,\big\|
_{q}\lesssim \norm{ f v}_{p}$
   %   \eqref{ineq:pittgen}
    holds.
\end{Problem B}
Problem B was posed by Benedetto-Heinig, 
Jurkat–Sampson,
and Muckenhoupt in the 1980s, see
\cite{h,JS1,Muc1}. A complete solution of this problem is only known in the case  $p\le q$.

In the case $q<p$ and $\max(q,p')\geq 2$, sufficient conditions have been obtained in \cite{BHJFAA} and \cite{nachr}, however
%, no necessary conditions were known other than the trivial ones. 
%other than those from the case $p<q.$ However
 it is not known whether they are necessary.
Regarding the most difficult case $q<2<p$, not much is known, besides the fact that the conditions in \cite{BHJFAA} are no longer sufficient
unless the weights 
satisfy some additional conditions,
see \cite{sin}. 

We note that weighted Fourier inequalities have been used, among other applications (see, e.g., \cite{beckner2008, bui}), to obtain 
Carleman-type estimates  that are crucial  for studying unique continuation problems (see \cite{gradient}).

%unique continuation Carleman-type results for solutions of first order differential equations and systems, see \cite{gradient}.

%Since the characterization of such weights is quite lengthy, we give it in Section \ref{section:finalremarks}.

The solution to Problem B
is given in Section \ref{section:finalremarks} and 
proceeds through the explicit computation of $\widetilde{X}$ for  $\norm{|\widehat{f}|^\beta\,}_X^{\frac1\beta}= \norm{u\widehat{f}\,}_q,$ a result which is of independent interest
(see Corollary \ref{coro:optiHL}). 

Furthermore, by computing $\widetilde{X}$,  
 %As a result of computation of $\widetilde{X}$,  
% As an example of an application of the explicit computation of $\widetilde{X}$, 
  we find 
  (see Theorem \ref{th:52})
  that the smallest rearrangement invariant function space which contains the Fourier coefficients of every $f\in L^{2,p}(\mathbb{T})$ for $2<p\leq \infty$ is given by the norm $$
\left(\sum_{n=1}^\infty \left(\sum_{j=1}^n c_n^{*,2 } \right)^{\frac{p}{2}} \frac{1}{n \log^{\frac{p}{2}}(n+1)}\right)^{\frac{1}{p}},
$$ thus solving the problem of describing  optimal Fourier inequalities in Lorentz spaces mentioned in Subsection \ref{statement}.

 \subsection{Structure of the article}
%We have organized this article in the following way.

We now proceed to outline the structure of this work.
In Section \ref{section:aux} we collect some preliminary  results which will be used throughout the paper: Calderón's rearrangement method, duality principle, 
Maurey's factorization, Khinchin's inequality, and 
Hardy's inequalities.

In Section \ref{section:necessary} we obtain new necessary conditions on the general  (not necessarily monotonic) weights $u$ and $v$ for the inequality $\big\|u\widehat{f}\,\big\|
_{q}\leq C \norm{ f v}_p$
 %\begin{equation}
%\label{ineq:pittnomono}   \big\|u\widehat{f}\,\big\|_{q}\leq C \norm{ f v}_p
%\end{equation}
to hold for $f\in L^1$.
%We note that for the proof of Theorem \ref{theorem:mainabstract} it suffices to study inequality \eqref{ineq:pittgen} for $p=\infty$ and $q\leq 1$ with $u$ and $v^{-1}$ monotone, whenever possible we obtain necessary conditions for a wider range of $p,q$ under no monotonicity restrictions on the weights, as the more general proofs are not substantially longer.
We focus on the case of the Fourier transform; the results for series and coefficients can be obtained similarly.  

Section \ref{section:mainproofs} contains the proof of our main result, Theorem \ref{theorem:mainabstract}. In more detail, first we prove Theorem \ref{theorem:mainabstract} for $X=L^1(u)$ and $Y=L^\infty(v)$ by making use of results in Sections \ref{section:aux}  and \ref{section:necessary}. 
Then we obtain the result in full generality by combining the previous particular case with the properties of left- and right-admissible spaces.

In Section \ref{section:finalremarks} we  present the solution to
Problem B given in Theorem \ref{theorem:mainh}. In order to prove it, 
as an application of the main Theorem \ref{theorem:mainabstract},
we first 
find  the largest right-admissible space $Y$  for which
$\norm{u\widehat{f}\,}_q\leq \norm{f}_Y$
        holds with 
 radial non-increasing 
 $u$ (see 
 Corollary \ref{coro:optiHL}).
 %by combining  the main Theorem \ref{theorem:mainabstract} with the  Hardy's inequalities in %relying heavily on 
 %the results in Subsection \ref{subsec:hardy}. %Theorem \ref{theorem:mainh} follows from  Corollary \ref{coro:optiHL} and Hardy's inequalities.
We also discuss the corresponding results for Fourier series on $\mathbb{T}$.

 Finally, as a further application of Theorem \ref{theorem:mainabstract}, in Section  \ref{section:optimalineq}, 
 we 
prove
 optimal Fourier inequalities for 
 Lorentz, Morrey and  Orlicz spaces. 

\section{Auxiliary  results}
\label{section:aux}
In order to make this article  self-contained, 
in this section we collect several known results which will be needed in our proofs.

\subsection{The method of Calderón}
\label{section:preliminary} 
We now explain the Calderón-type interpolative technique to obtain inequalities \eqref{ineq:pittgen} and 
\eqref{eq:pittgenX}. Before giving the results, we introduce some notation.

\begin{definition}
\label{def:prec}
    We write $F\prec G$ whenever
    \begin{equation*}\int_0 ^x F^{*,2} dt  \leq\int_0 ^x \left(\int_0 ^{t^{-1}} G^* ds \right)^2 dt.
\end{equation*}
\end{definition}
The main idea of the method is the following characterization of sublinear operators of joint strong type $(1,\infty;2,2).$
\begin{lemma}[\protect {\cite{bennett1988interpolation} and \cite{jodeit}}]
\label{lemma:kfunc}
Let $T$ be a sublinear operator. Then $T$ is of joint strong type $(1, \infty;2,2)$ if and only if for any $x \in \mathbb{R}_+$ and $f\in L^1 +L^2$,
 \begin{equation}
 \label{ineq:Kfunc}
        \int_0 ^x T(f)^{*,2} \lesssim  \int_0 ^x \left(\int_0 ^{t^{-1}} f^*\right)^2 dt, %\approx x \left(\int_0 ^{x^{-1}} f^*\right)^2 + \int_{x^{-1}}^\infty f^{*,2},
    \end{equation}
   % Moreover, if $T(f)=\widehat{f}$, the same holds for $f\in W(L^1, \ell^2)$. 
% In particular, using the notation introduced in Definition \ref{def:prec}, the Fourier transform satisfies
% $$\widehat{f}\prec K_d f,$$ where $K_d$ is a dimensional constant.
which, in the notation of Definition \ref{def:prec}, means that the operator $T$ satisfies $$T(f)\prec K f$$ with some constant $K$.

Conversely, for any $F\prec G$ with $G\in L^1+L ^2$, there exists a sublinear operator of joint strong type $(1,\infty;2,2)$ such that $T(G)=F$.
\end{lemma}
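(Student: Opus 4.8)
The plan is to prove the two directions of Lemma~\ref{lemma:kfunc} separately, treating the reduction to the classical $K$-functional as the conceptual core. Recall that for the couple $(L^1, L^2)$ the $K$-functional satisfies, up to absolute constants,
\begin{equation*}
K(t,f;L^1,L^2) \approx \Bigl(\int_0^{t^2} f^{*,2}\Bigr)^{1/2} + t\,\Bigl(\int_{t^2}^\infty f^{*,2}\Bigr)^{1/2},
\end{equation*}
and, after a change of variables, the quantity $\bigl(\int_0^x T(f)^{*,2}\bigr)^{1/2}$ is comparable to $\sup_{t\le \sqrt x} K(t,T(f);L^1,L^2)$ divided by an appropriate power, while $\bigl(\int_0^x (\int_0^{1/t} f^*)^2\,dt\bigr)^{1/2}$ is the analogous expression for the Calderón operator applied to $f$. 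The first step is therefore to record these equivalences precisely, so that \eqref{ineq:Kfunc} becomes a statement purely about $K$-functionals.

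For the forward direction, suppose $T$ is of joint strong type $(1,\infty;2,2)$, i.e. it satisfies \eqref{ineq:Tl1} and \eqref{ineq:Tl2} with $p=1,2$. By the definition of the $K$-functional, writing $f = f_1 + f_2$ with $f_1 \in L^1$, $f_2 \in L^2$ and using sublinearity, $T(f)^* \le T(f_1)^* + T(f_2)^*$ pointwise after rearrangement (in the Calderón sense); bounding $\|T(f_1)\|_\infty$ and $\|T(f_2)\|_2$ by the operator norms and optimizing over the decomposition gives $K(t, T(f); L^2, L^\infty) \lesssim K(t,f;L^2,L^\infty)$ in the relevant parametrization. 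Translating back through the $K$-functional formula for $(L^1,L^2)$ (equivalently $(L^2,L^\infty)$ after the standard duality/scaling bookkeeping) yields exactly \eqref{ineq:Kfunc}. The only care needed here is matching the normalizations of the measure and the factor $t^{-1}$ inside the inner integral; this is routine but must be done carefully.

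For the converse, given $F \prec G$ with $G \in L^1 + L^2$, I would construct $T$ explicitly. The natural candidate is a composition: first map $G$ to its "level function" or to the Calderón-type majorant, then rescale to match $F$. Concretely, since $F\prec G$ means $\int_0^x F^{*,2} \le \int_0^x \bigl(\int_0^{1/t}G^*\bigr)^2 dt$ for all $x$, one can realize $F$ as the image of the radial data built from $G^*$ under a monotone rearrangement-type map composed with multiplication by a fixed function of modulus controlled by the ratio of the two nondecreasing integrals — this is precisely the construction used in the references \cite{bennett1988interpolation,jodeit}, where one checks directly that the resulting $T$ satisfies the two endpoint bounds because the defining inequality on the integrals is exactly the $K$-functional domination that characterizes membership in the interpolation orbit of $G$ for the couple $(L^1,L^2)$. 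I would invoke the orbit/co-orbit theorem (Brudnyi–Krugljak, or the Sedaev–Semenov description for $L^p$ couples): since $F$ lies in the $K$-orbit of $G$ with constant $K$, there is a bounded operator on the couple carrying $G$ to $F$, and one verifies it can be taken sublinear. The main obstacle is this converse direction: producing a genuinely \emph{sublinear} (not merely bounded linear) operator with $T(G) = F$ \emph{exactly}, rather than up to equivalence, and ensuring it is defined on all of $L^1 + L^2$ consistently. I expect the cleanest route is to cite the explicit construction from \cite{jodeit} verbatim and merely verify it produces the two endpoint estimates from the hypothesis $F \prec KG$, rather than re-deriving the orbit theorem.
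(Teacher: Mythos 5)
The forward direction of your proposal agrees in spirit with what the paper does: both defer to the $K$-functional characterization in \cite{jodeit}, and your discussion of the $(L^1,L^2)$ couple is the standard route. That half is fine.

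The gap is in the converse. You correctly identify the two difficulties — obtaining a \emph{sublinear} operator and obtaining $T(G)=F$ \emph{exactly} — but you do not resolve them, and the strategies you sketch would not. A "multiplication by a fixed function of modulus controlled by the ratio of the two integrals" cannot carry $G$ to $F$, because $F\prec G$ imposes only an integrated relation between $F^{*,2}$ and $\bigl(\int_0^{1/t}G^*\bigr)^2$, not any pointwise relation between $F$ and $G$; in general $F$ can be supported entirely where $G$ vanishes. Invoking the orbit theorem produces a bounded operator on the couple but not obviously a sublinear one, and not one hitting $F$ exactly; you acknowledge this without filling the hole. You also misattribute the source: the explicit construction you hope to cite is not in \cite{jodeit} (that reference supplies only the forward characterization); the paper's converse rests on the Calderón--Mityagin theorem from \cite{bennett1988interpolation}.

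What the paper actually does is reduce the couple $(1,\infty;2,2)$ to $(1,1;\infty,\infty)$ by squaring: set $f=F^{*,2}$ and $g=\bigl(\int_0^{t^{-1}}G^*\bigr)^2$, so that $F\prec G$ is exactly the Calderón--Mityagin majorization $\int_0^x f^*\lesssim\int_0^x g^*$. Calderón--Mityagin then yields a \emph{positive linear} operator $S$ of joint strong type $(1,1;\infty,\infty)$ with $S(g)=f$. Defining
\begin{equation*}
\widetilde T(h)=\Bigl(S\Bigl(\Bigl[\int_0^{t^{-1}}h\Bigr]^2\Bigr)\Bigr)^{1/2}
\end{equation*}
gives $\widetilde T(G^*)=F^*$, and sublinearity follows from positivity of $S$ together with the pointwise $L^2$-triangle inequality. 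Two more applications of Calderón--Mityagin supply positive linear $S_1, S_2$ with $S_1(F^*)=F$ and $S_2(G)=G^*$, and $T=S_1\circ\widetilde T\circ S_2$ is the required operator. Without this specific square--conjugate--square-root device, your proposal does not reach a complete proof of the converse.
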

\begin{proof}The first part of the result is given in \cite[Theorem 4.7]{jodeit}.
    For the proof of the converse we first find $T$ such that $T(G^*)=F^*$. Let $f=F^{*,2}$ and $g=\left(\int_0^{t^{-1}} G^{*}\right)^2$.
    By \cite[Chapter 3, Theorem 2.10]{bennett1988interpolation}, there exists a positive linear operator $S$ of joint strong type $(1,1;\infty,\infty)$, equivalently, with the property that for any $h:\mathbb{R} \to \mathbb{C}$
    \begin{align}
    \label{11}
      \int_0^x S(h)^{*}\lesssim  \int_0^x h^{*},
    \end{align}
    such that \begin{align*}
       S(g)=f.
    \end{align*}
    Define
    \begin{align*}
      \widetilde{T}(h)=\Big(S\Big(\Big[\int_0^{t^{-1}} h\Big]^2\Big)\Big)^\frac12
    \end{align*} and observe that $\widetilde{T}(G^*)=F^{*}$. Moreover,
  since $S$ is positive, 
  $\widetilde{T}$ is sublinear. Finally, from the property \eqref{11} we see that $\widetilde{T}$ satisfies condition \eqref{ineq:Kfunc}, whence we deduce that it is of joint strong type $(1,\infty;2,2)$.

  Next, once again by \cite[Chapter 3, Theorem 2.10]{bennett1988interpolation} there exist  positive linear operators $S_1$ and $S_2$ of joint strong type $(1,1;\infty,\infty)$ such that $S_1(F^*)=F$ and $S_2(G)=G^*$. Thus, $T=S_1 \circ \widetilde{T} \circ S_2$ is the desired operator.

  We note that if $F$ is a sequence,
  it can be interpreted as
$\sum_{n \in \mathbb{Z}} F(n) \mathbbm{1}_{[n-\frac 12, n+\frac 12]}$.
To ensure that the operator is sequence-valued, we define
 $$T_{seq}(h)=\sum_{n \in \mathbb{Z}} \left(\int_{n-\frac 12}^{n+\frac 12} T(h)\right) \mathbbm{1}_{[n-\frac 12, n+\frac 12]}.$$
\end{proof}
In light of Lemma \ref{lemma:kfunc}, inequality \eqref{ineq:pittXY} can be obtained by establishing an inequality for non-increasing functions/sequences. More precisely, 
\begin{corollary}
\label{coro:method} Let $X$ and $Y$ be left- and right-admissible spaces
and $0< \beta\leq 1$.
 Then, inequality \eqref{ineq:Tfxy} holds for any sublinear $T$ of joint strong type $(1,\infty;2,2)$ and any $f$ if and only if for any $F\prec G$
\begin{align}
\label{ineq:FGxy}
    \norm{F^{\circ,\beta}}_X^\frac1\beta\lesssim \norm{G^\circ}_Y.
\end{align} In particular, for monotone $u$ and $v^{-1}$, inequality \eqref{ineq:Tpittgen} holds for any $T$ of joint strong type $(1,\infty;2,2)$ and any $f$ if and only if for any $F\prec G$
\begin{align}
\label{ineq:FGuv}
    \norm{F^\circ u}_q\lesssim \norm{G^\circ v}_p.
\end{align}
\end{corollary}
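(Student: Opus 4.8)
The plan is to prove Corollary \ref{coro:method} as a direct application of Lemma \ref{lemma:kfunc}, together with the defining properties of left- and right-admissible functionals. The statement is an equivalence, so I will establish the two implications separately, and the ``in particular'' clause will then follow by specializing $X$ and $Y$ to the appropriate weighted Lebesgue functionals listed in the Remark after the definitions.

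\emph{From \eqref{ineq:Tfxy} to \eqref{ineq:FGxy}.} Suppose \eqref{ineq:Tfxy} holds for every sublinear $T$ of joint strong type $(1,\infty;2,2)$. Fix $F\prec G$ with $G\in L^1+L^2$. By the converse direction of Lemma \ref{lemma:kfunc} there is a sublinear operator $T$ of joint strong type $(1,\infty;2,2)$ with $T(G)=F$. Applying \eqref{ineq:Tfxy} to this $T$ and to the function $f=G$ gives $\norm{|F|^\beta}_X^{1/\beta}\leq C_2\norm{G}_Y$. Now I invoke the admissibility properties to pass to rearrangements: since $\norm{\cdot}_X$ is left-admissible, $\norm{\cdot}_X=\norm{\cdot}_{X''}$ and the associate controls circular rearrangements, which (by the duality/rearrangement argument behind property (i) of left-admissibility) yields $\norm{F^{\circ,\beta}}_X\leq\norm{|F|^{\beta}}_X$ — indeed $|F|^\beta$ and $F^{\circ,\beta}$ are equimeasurable, so one needs only that $\norm{\cdot}_X$ does not increase under circular rearrangement on its domain, which is exactly what (i)+(ii) provide via $\norm{H^\circ}_X=\norm{H^\circ}_{X''}\le\norm{H}_{X''}=\norm{H}_X$. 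Likewise, right-admissibility property (ii) gives $\norm{G^\circ}_Y\leq\norm{G}_Y$. However, the inequality I want has $\norm{G^\circ}_Y$ on the \emph{right}, so I must run this more carefully: I should instead start from an arbitrary $F\prec G$, replace $G$ by $G^\circ$ (noting $F\prec G$ depends only on $G^*$, hence $F\prec G^\circ$ as well), and apply the previous paragraph with $G^\circ$ in place of $G$, obtaining $\norm{F^{\circ,\beta}}_X^{1/\beta}\leq C_2\norm{G^\circ}_Y$, which is \eqref{ineq:FGxy}.

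\emph{From \eqref{ineq:FGxy} to \eqref{ineq:Tfxy}.} Conversely, assume \eqref{ineq:FGxy}. Let $T$ be any sublinear operator of joint strong type $(1,\infty;2,2)$ and let $f\in L^1+L^2$. By the forward direction of Lemma \ref{lemma:kfunc} we have $T(f)\prec Kf$ for some constant $K$. Applying \eqref{ineq:FGxy} with $F=T(f)$ and $G=Kf$ gives $\norm{T(f)^{\circ,\beta}}_X^{1/\beta}\lesssim\norm{(Kf)^\circ}_Y=\norm{K f^\circ}_Y$. Using right-admissibility property (i) with the constant function $m\equiv K$ (or $m\equiv K$ times a scalar, observing $f^\circ\ge 0$), this is $\lesssim K\norm{f^\circ}_Y\leq K\norm{f}_Y$ by right-admissibility (ii). It remains to replace $T(f)^{\circ,\beta}$ by $|T(f)|^\beta$ on the left: since $|T(f)|^\beta$ and $T(f)^{\circ,\beta}$ are equimeasurable, and left-admissibility gives $\norm{H}_X=\norm{H^{\circ}}_X$ for the modulus of a function via $\norm{H}_X=\norm{H}_{X''}$ and the fact that $X''$ is a genuine rearrangement-invariant-type functional on its domain — concretely, $\norm{|T(f)|^\beta}_X=\norm{|T(f)|^\beta}_{X''}$, and $X''$ depends only on the rearrangement, so equals $\norm{T(f)^{\circ,\beta}}_{X''}=\norm{T(f)^{\circ,\beta}}_X$. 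This yields \eqref{ineq:Tfxy} with $C_2\lesssim_\beta C_1$.

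\emph{The ``in particular'' clause and the main obstacle.} For monotone $u$ and $v^{-1}$, the functional $\norm{F}_X=\norm{Fu}_q$ ($1\le q\le\infty$, $u$ radial non-increasing) is left-admissible and $\norm{G}_Y=\norm{Gv}_p$ ($0<q\le\infty$, $v=1/v^{-1}$ radial non-decreasing) is right-admissible, by the Remark following the definitions; moreover in this case $\norm{F^\circ u}_q=\norm{(Fu)^\circ}_q$ because $u=u^\circ$ is itself radial non-increasing, so specializing the general equivalence (with $\beta=1$, or with the $\beta$-trick of the Remark when $q<1$) to these choices gives exactly the equivalence of \eqref{ineq:Tpittgen} with \eqref{ineq:FGuv}. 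The only genuinely delicate point in the whole argument — and the step I would be most careful about — is the bookkeeping that lets one freely replace $|H|^\beta$ by $H^{\circ,\beta}$ inside $\norm{\cdot}_X$: this is not literally axiom (i) or (ii) of left-admissibility but is a short consequence of them, since the biconjugate norm $\norm{\cdot}_{X''}$ is automatically monotone under rearrangement (being a supremum against $X'$, whose elements may be replaced by their circular rearrangements by (ii)), and hence invariant on equimeasurable nonnegative functions; I would state this as a one-line sub-lemma before running the two implications so that both directions can cite it cleanly.
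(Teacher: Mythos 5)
Your overall strategy matches the paper's: both directions rest on Lemma \ref{lemma:kfunc} together with the admissibility axioms. But the ``one-line sub-lemma'' you propose is false, and the direction it points matters. You claim that $\norm{\cdot}_{X''}$ ``depends only on the rearrangement'' and hence $\norm{H^\circ}_X\leq\norm{H}_X$ (in one place you even assert equality $\norm{H}_X=\norm{H^\circ}_X$). This is not a consequence of left-admissibility, and it is not true: take $X=L^1(u)$ with $u=\mathbbm{1}_{[-1/2,1/2]}$ (radial, non-increasing, hence left-admissible) and $H=\mathbbm{1}_{[1,2]}$; then $\norm{H}_X=0$ while $\norm{H^\circ}_X=1$. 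What left-admissibility actually yields — and what the paper proves via the chain $\norm{H}_X=\norm{H}_{X''}=\sup_{\norm{h}_{X'}\leq 1}\int|Hh|\leq\sup\int H^\circ h^\circ\leq\norm{H^\circ}_X\sup\norm{h^\circ}_{X'}\leq\norm{H^\circ}_X$ — is the one-sided bound $\norm{H}_X\leq\norm{H^\circ}_X$, with no reverse inequality in general.

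This error is harmless in the direction \eqref{ineq:FGxy}$\implies$\eqref{ineq:Tfxy}, because there you only need $\norm{|T(f)|^\beta}_X\leq\norm{T(f)^{\circ,\beta}}_X$, which is exactly the correct inequality; your invocation of a (false) equality happens to land on the right side. But in the direction \eqref{ineq:Tfxy}$\implies$\eqref{ineq:FGxy} your argument genuinely breaks: you obtain $\norm{|F|^\beta}_X^{1/\beta}\leq C_2\norm{G^\circ}_Y$ and then try to pass to $\norm{F^{\circ,\beta}}_X^{1/\beta}$ using $\norm{F^{\circ,\beta}}_X\leq\norm{|F|^\beta}_X$, which is the reverse of the true inequality — so the step does not close. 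The repair is to not rearrange after the fact at all: since $\prec$ depends only on $F^*$ and $G^*$, you have $F^\circ\prec G^\circ$, so Lemma \ref{lemma:kfunc} gives a sublinear $T$ of joint strong type with $T(G^\circ)=F^\circ$, and \eqref{ineq:Tfxy} applied to this $T$ and $f=G^\circ$ gives $\norm{F^{\circ,\beta}}_X^{1/\beta}\leq C_2\norm{G^\circ}_Y$ directly. (You already noticed the need to pass to $G^\circ$; you must pass to $F^\circ$ at the same time, for the same reason.) With that one change, both directions are correct and coincide with the paper's argument.
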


\begin{proof}
    First, by Lemma \ref{lemma:kfunc}, there exists a constant $K$ such that setting $F=|T(f)|$ and $G=K|f|$, we have $F\prec G$. Thus, assuming that  inequality \eqref{ineq:FGxy} holds for any $F\prec G$,  by right-admissibility of $Y$ we have
    $$\norm{T(f)^{\circ , \beta}}_X^{\frac{1}{\beta}} \lesssim \norm{K f^\circ}_Y \lesssim K \norm{f^\circ}_Y.$$  Second, by left-admissibility of $X$, for any $F$ we have $\norm{F}_X\leq \norm{F^\circ}_X$. Indeed,
    \begin{eqnarray*}
        \norm{F}_X &=& \norm{F}_{X''} = \sup_{\norm{h}_{X'}\leq 1} \int  \left | F h \right| \leq \sup_{\norm{h}_{X'}\leq 1}  \int F^\circ h^\circ  \\ &\leq&\sup_{\norm{h}_{X'}\leq 1} \norm{F^\circ}_X \norm{h^\circ}_{X'}\leq \sup_{\norm{h}_{X'}\leq 1}\norm{F^\circ}_X \norm{h}_{X'} \leq \norm{F^\circ}_X,
    \end{eqnarray*}
    where the first inequality is the Hardy-Littlewood rearrangement inequality.
    Therefore, if inequality \eqref{ineq:FGxy} holds,  by right-admissibility of $Y$, for any $f \in L^1+L^2$, we have
    $$\norm{|T(f)|^{ \beta}}_X^{\frac{1}{\beta}} \leq \norm{T(f)^{\circ , \beta}}_X^{\frac{1}{\beta}}\lesssim
    \norm{f^\circ}_Y \lesssim \norm{f}_Y,$$ that is, inequality \eqref{ineq:Tfxy} is valid.

    The proof of the converse follows from the second part of Lemma \ref{lemma:kfunc}.
\end{proof}
\subsection{Duality}

We recall here that because the Fourier operator is self-adjoint, its boundedness between two spaces implies its boundedness between their respective associate spaces. 
%for $(u,v,p,q)$ and $(v^{-1},u^{-1},q',p')$. More precisely,
\begin{lemma}
\label{lemma:duality} Assume that $\norm{\cdot}_Y$ satisfies property (i) of right-admissibility. If
\begin{equation*}
    \big\|\widehat{f}\,\big\|_X\leq \norm{f}_Y
\end{equation*} holds for any $f\in L^1$, then, for any $g\in L^1$,
\begin{equation*}
    \norm{\widehat{g}}_{Y'}\leq \norm{g}_{X'}.
\end{equation*}In particular, 
for $1\leq p,q \leq \infty$,
 the inequality $\big\|u\widehat{f}\,\big\|
_{q}\leq C \norm{ f v}_p$ holds for any $f\in L ^1$, if and only if for any $g \in L^1$,
$%    \begin{equation*}\label{eq:dual}
    \norm{v^{-1} \widehat{g} }_{p'}\leq C \norm{u^{-1}g}_{q'}.
$%\end{equation*}
\end{lemma}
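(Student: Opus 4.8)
The statement to prove is Lemma \ref{lemma:duality}, which is a standard duality fact for self-adjoint operators. Here is my plan.

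\medskip

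The plan is to exploit the fact that the Fourier transform is (formally) self-adjoint, i.e.\ $\int \widehat{f}\,g = \int f\,\widehat{g}$ for suitable $f,g$, which is just Fubini's theorem applied to the absolutely convergent double integral $\iint f(x)g(\xi)e^{-2\pi i\langle x,\xi\rangle}\,dx\,d\xi$ when $f,g\in L^1$. First I would fix $g\in L^1$ and unwind the definition of the associate functional: $\norm{\widehat{g}}_{Y'} = \sup\{\int |\widehat{g}\,\phi| : \norm{\phi}_Y\le 1\}$. The key point is that, by replacing $\phi$ with $\phi \cdot \overline{\operatorname{sgn}\widehat{g}}$ (a unimodular multiplier), property (i) of right-admissibility guarantees $\norm{\phi\cdot m}_Y \le \norm{\phi}_Y\norm{m}_\infty = \norm{\phi}_Y$, so the supremum is unchanged if we instead write $\norm{\widehat{g}}_{Y'} = \sup\{\operatorname{Re}\int \widehat{g}\,\overline{\phi} : \norm{\phi}_Y\le 1\}$, and moreover it suffices to take the supremum over $\phi\in L^1\cap Y$ by a density/truncation argument (truncating $\phi$ on large balls and by height keeps it in $L^1$, does not increase $\norm{\cdot}_Y$ again by property (i), and recovers the full supremum in the limit).

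\medskip

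Then, for $\phi\in L^1$, I would apply self-adjointness: $\int \widehat{g}\,\overline{\phi} = \int g\,\overline{\widehat{\phi}}$ (being slightly careful with the complex conjugate, using that $\overline{\widehat{\phi}}(\xi) = \widehat{\overline{\phi}}(-\xi)$, or simply working with the bilinear pairing $\int \widehat{g}\,\phi = \int g\,\widehat{\phi}$ and inserting the unimodular factor afterwards). This gives
\begin{equation*}
\Big|\int \widehat{g}\,\overline{\phi}\Big| = \Big|\int g\,\overline{\widehat{\phi}}\Big| \le \int |g\,\widehat{\phi}| \le \norm{g}_{X'}\,\norm{\widehat{\phi}}_X \le \norm{g}_{X'}\norm{\phi}_Y,
\end{equation*}
where the second inequality is the definition of $\norm{\cdot}_{X'}$ and the last is the hypothesis $\norm{\widehat{f}\,}_X\le\norm{f}_Y$ applied to $f=\phi$. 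Taking the supremum over $\phi$ with $\norm{\phi}_Y\le1$ yields $\norm{\widehat{g}}_{Y'}\le\norm{g}_{X'}$, which is the claim. For the ``in particular'' statement, one only needs to recall from the Remark that when $\norm{F}_X=\norm{uF}_q$ with $1\le q\le\infty$ one has $\norm{h}_{X'}=\norm{u^{-1}h}_{q'}$ (standard $L^q$--$L^{q'}$ duality with a weight), and similarly $\norm{\cdot}_{Y'}=\norm{v^{-1}\cdot}_{p'}$ when $\norm{G}_Y=\norm{vG}_p$; substituting these into the general statement gives exactly $\norm{v^{-1}\widehat{g}}_{p'}\le C\norm{u^{-1}g}_{q'}$, and the converse direction is symmetric since $(v^{-1})^{-1}=v$, $(u^{-1})^{-1}=u$, $p''=p$, $q''=q$.

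\medskip

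I do not expect any serious obstacle here: the only mild technical points are (a) justifying that the supremum defining $\norm{\widehat{g}}_{Y'}$ can be restricted to $L^1$ test functions $\phi$ so that Fubini/self-adjointness applies verbatim — this is where property (i) of right-admissibility (the $L^\infty$-multiplier bound) does real work, both for the unimodular correction and for the truncation argument — and (b) bookkeeping with complex conjugates in the self-adjointness identity. Neither requires more than routine care, so the proof will be short.
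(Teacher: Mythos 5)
Your proof is correct and takes essentially the same route as the paper's (which is a short, routine duality argument via $\int \widehat{g}\,\phi = \int g\,\widehat{\phi}$). Two small remarks on bookkeeping: the cleanest way to avoid the conjugate issue you flag is to work throughout with the bilinear Parseval identity $\int \widehat{g}\,\phi=\int g\,\widehat{\phi}$ and only insert the unimodular factor $m=\overline{\operatorname{sgn}(\widehat{g}\,\phi)}$ (so that $\int|\widehat{g}\,\phi|=\int\widehat{g}\,(m\phi)$ with $\|m\phi\|_Y\le\|\phi\|_Y$ by property (i)), which removes any need to compare $\widehat{\phi}$ with $\check{\phi}$; and for the density step, the truncation $\phi_N=\phi\,\mathbbm{1}_{B_N}\mathbbm{1}_{\{|\phi|\le N\}}$ indeed satisfies $|\phi_N|\uparrow|\phi|$, so monotone convergence together with property (i) gives $\sup_{\phi\in L^1,\ \|\phi\|_Y\le 1}\int|\widehat{g}\,\phi|=\|\widehat{g}\|_{Y'}$, exactly as you say.
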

\begin{comment}
\begin{proof}[Proof to remove] If $f,g\in L^1$,
    $$|\int \hat{g} \bar{f}|= |\int {g} \bar{\hat{f}}| \norm{g}_{X'}\norm{\hat{f}}_X \leq \norm{g}_{X'}\norm{f}_Y.$$
    Thus, by (i), if $g\in L^1$,
    $$\norm{g}_{X'}\geq \sup_{f \in L^1, \norm{f}_Y=1}|\int \hat{g} \bar{f}|= \sup_{f \in L^1, \norm{f}_Y=1}\int |\hat{g} f|.$$ 
    Finally, once again by (i),  $$\norm{\hat{g}}_{Y'}= \sup_{\norm{f}_Y=1}\int |\hat{g} f| =\sup_{f c. support, \norm{f}_Y=1}\int |\hat{g} f|.$$ 
    
\end{proof}
\end{comment}

\subsection{Khinchin's inequality and factorization} 
%This  subsection contains some needed results from functional analysis. %which are mainly taken from \cite{albiac2006topics} and \cite{Lindenstrauss1996}.
%The Khinchin inequality is a simple tool which allows the use of randomization techniques in a variety of contexts.
\begin{lemma} [Khinchin's inequality,   \protect{\cite[Theorem 6.2.3]{albiac2006topics}}]
\label{theorem:khintchine}
For any $0<p<\infty$ we have, for any $n$ and $a_1, \dots, a_n \in \mathbb{C}$,
\begin{equation}\label{khi}
    %A_p \left(\sum_{j=1}^n |a_j|^2 \right)^{\frac{1}{2}} \leq 
    \left(\mathbb{E} \left [\left|\sum_{j=1}^n a_j \varepsilon_j\right|^p\right ] \right)^{\frac{1}{p}} \approx_p
    \left(\sum_{j=1}^n |a_j|^2 \right)^{\frac{1}{2}},
\end{equation} where 

\begin{equation*}\mathbb{E} \left [\left|\sum_{j=1}^n a_j \varepsilon_j\right|^p\right ] = 2^{-n} \sum_{\varepsilon_1= \pm 1, \dots, \varepsilon_n=\pm 1} \left|\sum_{j=1}^n a_j \varepsilon_j\right|^p.\end{equation*}
\end{lemma}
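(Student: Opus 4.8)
The plan is to prove the two bounds $\norm{S}_p \lesssim_p \norm{a}_2$ and $\norm{a}_2 \lesssim_p \norm{S}_p$ separately, where I write $S=\sum_{j=1}^n a_j\varepsilon_j$ and $\norm{\cdot}_p$ for the $L^p$-norm with respect to the uniform probability measure on $\{\pm1\}^n$, and $\norm{a}_2=(\sum_j|a_j|^2)^{1/2}$. A preliminary reduction disposes of complex coefficients: writing $a_j=b_j+ic_j$ one has $S=U+iV$ with $U=\sum_j b_j\varepsilon_j$ and $V=\sum_j c_j\varepsilon_j$ real-valued, and since $\max(|U|,|V|)\le |S|\le |U|+|V|$ pointwise — and $|S|^p\le |U|^p+|V|^p$ when $0<p\le1$ — the estimate for $S$ follows from the estimates for $U$ and $V$ together with the elementary chain $\tfrac{1}{\sqrt2}(x^2+y^2)^{1/2}\le\max(x,y)\le(x^2+y^2)^{1/2}\le x+y$. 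So I would assume $a_j\in\mathbb{R}$ from this point on.

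The case $p=2$ is the identity $\mathbb{E}S^2=\sum_j a_j^2=\norm{a}_2^2$, immediate from independence and $\mathbb{E}\varepsilon_i\varepsilon_j=\delta_{ij}$. For the upper bound I would establish the sub-Gaussian estimate $\mathbb{E}e^{\lambda S}=\prod_j\cosh(\lambda a_j)\le\prod_j e^{\lambda^2 a_j^2/2}=e^{\lambda^2\norm{a}_2^2/2}$ for all $\lambda\in\mathbb{R}$, using $\cosh t\le e^{t^2/2}$; a Chernoff bound then gives $\mathbb{P}(|S|>t)\le 2e^{-t^2/(2\norm{a}_2^2)}$, and integrating $\mathbb{E}|S|^p=\int_0^\infty pt^{p-1}\mathbb{P}(|S|>t)\,dt$ produces $\norm{S}_p\le C_p\norm{a}_2$ for every $p\in(0,\infty)$. (For $0<p\le2$ one could instead simply invoke the power-mean inequality $\norm{S}_p\le\norm{S}_2=\norm{a}_2$.)

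For the lower bound, the range $p\ge2$ is immediate from monotonicity of $L^r$-norms on a probability space: $\norm{S}_p\ge\norm{S}_2=\norm{a}_2$. The range $0<p<2$ is where an actual argument is needed, and I would run the classical interpolation trick: fix $\theta\in(0,1)$ with $\tfrac12=\tfrac{\theta}{p}+\tfrac{1-\theta}{4}$, so that log-convexity of $r\mapsto\norm{S}_r$ (H\"older's inequality) gives $\norm{a}_2=\norm{S}_2\le\norm{S}_p^{\theta}\norm{S}_4^{1-\theta}\le\norm{S}_p^{\theta}\bigl(C_4\norm{a}_2\bigr)^{1-\theta}$, and hence $\norm{S}_p\ge C_4^{-(1-\theta)/\theta}\norm{a}_2$, a constant depending only on $p$.

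The one genuinely delicate point is this last step: below $p=2$ the monotonicity bound is vacuous, and the lower bound is recovered only by feeding back the already-proven upper bound at the larger exponent $4$. Everything else — the $\cosh$ computation, the tail integration, and the complex-to-real reduction — is routine.
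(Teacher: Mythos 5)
Your argument is correct, and it is essentially the standard textbook proof of Khintchine's inequality (sub-Gaussian moment-generating-function bound for the upper estimate, reduction from $L^2$ via a higher moment for the lower estimate when $p<2$). The paper does not prove this lemma; it simply cites \cite[Theorem 6.2.3]{albiac2006topics}, which proceeds along the same lines as your write-up, so there is nothing substantively different to compare. Two small sanity checks that your text passes but are worth making explicit: (a) in the complex-to-real reduction for the lower bound you use $\|S\|_p\ge\max(\|U\|_p,\|V\|_p)$, which follows from $|S|\ge\max(|U|,|V|)$ pointwise and then from $\max(\|b\|_2,\|c\|_2)\ge\tfrac{1}{\sqrt2}\|a\|_2$, so the constant degrades only by an absolute factor; and (b) in the interpolation step one has $\tfrac12=\tfrac\theta p+\tfrac{1-\theta}4$ with $\theta=p/(4-p)\in(0,1)$ for $0<p<2$, so the exponent $(1-\theta)/\theta$ is finite and the resulting constant depends only on $p$.
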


\begin{theorem}
    
\label{coro:usedfactor}
   Let $T$ be a linear operator, $u$ and $v$ be non-negative functions, and 
    $0<q\leq 2\leq p\leq \infty$. Assume that the inequality
    \begin{equation}
        \norm{u T(f)}_{q} \leq C \norm{fv}_p
    \end{equation} holds for any function $f\in L^p(v)$. Then, there exists $h$ with $\norm{h^{-1}}_{q^{\sharp}}=1$ such that, for any $f\in L^p(v)$,
    \begin{equation}
        \norm{u h T(f)}_2 \lesssim_{q} C \norm{fv}_p.
    \end{equation} 
\end{theorem}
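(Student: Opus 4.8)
The plan is to prove this by a Maurey-type factorization argument applied to the linear operator $S(f) := uT(f)$, viewed as a map into a weighted $L^q$ space, combined with Khinchin's inequality to convert an $\ell^2$-sum estimate into an $L^p$-norm estimate. Since $0<q\le 2\le p\le\infty$, we have $\frac1{q^\sharp} = \frac1q-\frac12 \ge 0$, so $q^\sharp$ is the exponent conjugate (in the Maurey sense) to the pair $(q,2)$; concretely $\frac1q = \frac12 + \frac1{q^\sharp}$, which is exactly the Hölder relation we want, so that an estimate of the form $\norm{uhT(f)}_2 \lesssim C\norm{fv}_p$ with $\norm{h^{-1}}_{q^\sharp}=1$ would recover the hypothesis by Hölder's inequality in the reverse direction. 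The goal is the forward direction.

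First I would reduce to a finite-dimensional / finitely-supported situation and set up the factorization: one wants to produce a single weight $h$ (equivalently, a probability-type density $w = h^{-q^\sharp}$, so that $\norm{h^{-1}}_{q^\sharp}=1$ means $\int w = 1$) such that $\int |uT(f)|^2 w^{-1+\cdots}$... more precisely such that $\norm{uT(f)}_{L^2(h)}\lesssim_q C\norm{fv}_p$ uniformly in $f$. The standard route is Maurey's theorem on factorization of operators through $L^2$: an operator from a Banach space (here the relevant piece is that the $L^p(v)$ side with $p\ge 2$ has enough convexity, and the target weak-type/strong-type $(q,q)$ behaviour with $q\le 2$) factors through a Hilbert space with a change of density. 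I would invoke the version of Maurey factorization collected in Section~\ref{section:aux} (the lemma on factorization referenced there alongside Khinchin). The key analytic input making $p\ge 2$ usable is that $L^p(v)$ for $p\ge 2$ is $2$-convex, and the key input making $q\le 2$ usable is that the target estimate, after randomization, gives control of $\big(\sum_j \norm{uT(f_j)}_q^2\big)^{1/2}$ type quantities.

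The concrete steps I would carry out: (1) Fix finitely many $f_1,\dots,f_n\in L^p(v)$ and signs $\varepsilon_j$; apply the hypothesis to $f = \sum_j \varepsilon_j f_j$ and use linearity of $T$ to get $\norm{u\sum_j\varepsilon_j T(f_j)}_q \le C\norm{v\sum_j\varepsilon_j f_j}_p$. (2) Average over $\varepsilon\in\{\pm1\}^n$ and apply Khinchin's inequality (Lemma~\ref{theorem:khintchine}) on the left in $L^q$ to get $\norm{u\big(\sum_j |T(f_j)|^2\big)^{1/2}}_q \lesssim_q C\,\big(\text{average of }\norm{v\sum_j\varepsilon_j f_j}_p\big)$, and on the right, using $p\ge 2$ and the triangle inequality in $L^{p/2}$ together with Khinchin again (or directly the $2$-convexity of $L^p$), bound the right side by $\lesssim_q C\big(\sum_j\norm{f_jv}_p^2\big)^{1/2}$. (3) This is precisely the hypothesis of Maurey's factorization theorem: a linear operator $f\mapsto uT(f)$ from $L^p(v)$ into $L^q$ ($q\le 2$) satisfying a $(2,2)$-square-function bound factors as $L^p(v)\to L^2(h^{q})\subset$-with-weight, yielding a nonnegative $h$ with $\int h^{-q^\sharp}=1$ (normalizing) such that $\norm{h\,uT(f)}_2\lesssim_q C\norm{fv}_p$. (4) Remove the finite-dimensional reduction by a weak-$*$ compactness / exhaustion argument to obtain a single $h$ working for all $f\in L^p(v)$.

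The main obstacle I expect is Step~(3): getting the factorization to land with exactly the exponent $q^\sharp$ and with the normalization $\norm{h^{-1}}_{q^\sharp}=1$, rather than some weaker statement with an extra loss. The delicate point is that $q<2$ (strict) is what forces $q^\sharp<\infty$ and makes the change-of-density nontrivial; one must verify that the square-function inequality from Step~(2) is genuinely of the "cotype $2$ / factorization through $L^2$" form that Maurey's theorem requires, and track that the auxiliary density $w$ one extracts has $\int w^{?}$ finite with the right power. A secondary technical nuisance is the case $p=\infty$ (where $\norm{fv}_\infty = \sup|fv|$ and $2$-convexity is trivial but one should check the finite-subset argument still closes) and the case $q\le 1$ where $L^q$ is only a quasi-Banach space, so one should either work with $|uT(f)|^q$ and $L^1$, or cite a quasi-Banach version of Khinchin (which Lemma~\ref{theorem:khintchine} provides for all $0<p<\infty$) and of Maurey factorization; I would handle $q<1$ by noting the relevant factorization statements hold in the lattice (quasi-)Banach setting once $2$-convexity is in place.
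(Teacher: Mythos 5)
You correctly identify Maurey's factorization theorem as the central tool and correctly reduce the problem to verifying its square-function hypothesis for $\widetilde T = uT : L^p(v)\to L^q$. However, your verification of that hypothesis in Step~(2) has a genuine gap that the paper's argument is specifically designed to avoid.

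After randomizing and applying Khinchin on the left (which is fine, with a constant depending only on $q$), you must bound $\mathbb{E}\bigl[\|v\sum_j\varepsilon_j f_j\|_p^q\bigr]^{1/q}$ by $\bigl(\sum_j\|f_jv\|_p^2\bigr)^{1/2}$. This is a type-$2$ estimate for $L^p(v)$. For $2\le p<\infty$ it holds, but the type-$2$ constant of $L^p$ grows like $\sqrt p$, so you get $\lesssim_{p,q}$ rather than the claimed $\lesssim_q$; and for $p=\infty$ the estimate fails outright, since $L^\infty$ has only trivial type (e.g. take $f_j$ supported on disjoint sets to see $\mathbb E\|\sum\varepsilon_j f_j\|_\infty$ cannot be controlled by $(\sum\|f_j\|_\infty^2)^{1/2}$). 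You flag $p=\infty$ as a ``secondary technical nuisance'', but it is in fact the heart of the matter: in Section~\ref{section:mainproofs} the theorem is applied precisely with $p=\infty$ (through Lemma~\ref{prop:nopacific}), so an argument that only works for $p<\infty$ with a $p$-dependent constant does not serve the paper's purpose.

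The paper avoids randomization on the right entirely. It first notes the $2$-convexity inequality $\bigl\|(\sum_n|f_n|^2)^{1/2}\bigr\|_{L^p(v)}\le\bigl(\sum_n\|f_n\|_{L^p(v)}^2\bigr)^{1/2}$ (constant $1$, valid for all $2\le p\le\infty$), and then invokes the general Grothendieck inequality for Banach lattices (Theorem~\ref{theorem:groth}), which asserts that \emph{any} bounded operator between Banach lattices automatically satisfies the vector-valued bound $\|(\sum|\widetilde T(y_n)|^2)^{1/2}\|_X\le K_G\|\widetilde T\|\,\|(\sum|y_n|^2)^{1/2}\|_Y$ with a universal constant $K_G$. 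Combining these two gives the Maurey hypothesis with a constant independent of $p$, including $p=\infty$. A separate issue is your treatment of $q<1$: Grothendieck's theorem is stated for Banach lattices, so the paper has to do real work here (the finite-matrix factorization $A=DB$ with $\|A\|_{\ell^\infty\to\ell^q}\approx_q\|D\|_{\ell^1\to\ell^q}\|B\|_{\ell^\infty\to\ell^1}$, then Khinchin plus Jensen, then Grothendieck applied with target $\ell^1$), whereas you assert that ``the relevant factorization statements hold in the lattice (quasi-)Banach setting'' without substantiating it. That extension is exactly what requires proof.
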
 
Even though this result is somewhat folklore, we have not been able to find an exact reference. For the readers' convenience,
we give a proof here.
The argument is based on the following factorization criterion due to Maurey:
    \begin{theorem}[\protect{\cite[Theorem 1]{Maurey1973}}] 
\label{theorem:factorization}
Let $E$ be a quasi-normed space and $0<q\leq 2$.
Assume that 
 $\widetilde{T}:E \to L^q$ is a bounded linear operator.
Then, the following are equivalent:
\begin{enumerate}[label=(\roman*)]
\item there exists $h$ with $\norm{h^{-1}}_{q^{\sharp}}=1$ such that for every $f\in E$ the inequality \begin{equation}
\label{eq:facto2}\norm{h\widetilde{T}(f)}_{2}\leq\norm{f}_E\end{equation} holds;
\item for every finite sequence $(f_n)_{n=1}^N$ from $E$ the inequality
\begin{equation}\label{vsp3}
    \norm{\left(\sum_{n=1}^N |\widetilde{T}(f_n)|^2\right)^{\frac{1}{2}}}_{q}\leq  \left(\sum_{n=1}^N\norm{ f_n}_E^2\right)^{\frac{1}{2}}
\end{equation} holds.
\end{enumerate}
\end{theorem}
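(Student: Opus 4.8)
The plan is to prove the two implications separately, dispatching $q=2$ as a trivial special case: there $q^{\sharp}=\infty$, one takes $h\equiv 1$, and both $(i)$ and $(ii)$ collapse to $\sum_n\norm{\widetilde{T}(f_n)}_2^2\le\sum_n\norm{f_n}_E^2$. So assume $0<q<2$ throughout. The direction $(i)\Rightarrow(ii)$ is the easy one and is pure Hölder: given $h$ with $\norm{h^{-1}}_{q^{\sharp}}=1$, I would write the pointwise identity $\big(\sum_n|\widetilde{T}(f_n)|^2\big)^{1/2}=h^{-1}\cdot\big(\sum_n|h\widetilde{T}(f_n)|^2\big)^{1/2}$ and apply Hölder with the exponent relation $\tfrac1q=\tfrac1{q^{\sharp}}+\tfrac12$, bounding the left side of \eqref{vsp3} by $\norm{h^{-1}}_{q^{\sharp}}\big(\sum_n\norm{h\widetilde{T}(f_n)}_2^2\big)^{1/2}\le\big(\sum_n\norm{f_n}_E^2\big)^{1/2}$, the last step being \eqref{eq:facto2}.

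For $(ii)\Rightarrow(i)$, the first step is a reduction: it suffices to produce a measurable $v>0$ with $\int v\,d\mu=1$ for which $\int|\widetilde{T}(f)|^2\,v^{-2/q^{\sharp}}\,d\mu\le\norm{f}_E^2$ for every $f\in E$, because then $h:=v^{-1/q^{\sharp}}$ satisfies $\norm{h^{-1}}_{q^{\sharp}}^{q^{\sharp}}=\int v\,d\mu=1$ and $\norm{h\widetilde{T}(f)}_2\le\norm{f}_E$. The second step is to reinterpret $(ii)$ as a statement about the convex hull $D$ of the set $\{\,|\widetilde{T}(f)|^2:\norm{f}_E\le 1\,\}$ inside $L^{q/2}(\mu)$: if $\phi=\sum_n\lambda_n|\widetilde{T}(f_n)|^2$ with $\lambda_n\ge0$, $\sum_n\lambda_n=1$, $\norm{f_n}_E\le1$, then setting $g_n:=\lambda_n^{1/2}f_n$ (this is where linearity of $\widetilde{T}$ enters) we get $\sum_n|\widetilde{T}(g_n)|^2=\phi$ and $\sum_n\norm{g_n}_E^2\le\sum_n\lambda_n=1$, so $(ii)$ forces $\norm{\phi}_{q/2}\le1$ on all of $D$.

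The third step is a minimax/duality argument. I would consider $G(\phi,v):=\int\phi\,v^{-2/q^{\sharp}}\,d\mu\in[0,\infty]$ for $\phi\in D$ and $v$ in the set $P$ of probability densities; $G$ is affine in $\phi$ and convex in $v$ (as $t\mapsto t^{-2/q^{\sharp}}$ is convex on $(0,\infty)$), so a Ky Fan–Sion type minimax theorem gives $\inf_{v\in P}\sup_{\phi\in D}G=\sup_{\phi\in D}\inf_{v\in P}G$. It then remains to bound the right-hand side, and for this I would use an elementary optimization: for fixed $\phi\ge0$ and $a:=2/q^{\sharp}$, Hölder's inequality in the form $\int\phi^{1/(1+a)}\le\big(\int\phi\,v^{-a}\big)^{1/(1+a)}\big(\int v\big)^{a/(1+a)}$ shows $\inf_{v\in P}\int\phi\,v^{-a}\,d\mu=\big(\int\phi^{1/(1+a)}\,d\mu\big)^{1+a}$, attained at $v\propto\phi^{1/(1+a)}$; since the relation $\tfrac1{q^{\sharp}}=\tfrac1q-\tfrac12$ gives $1+a=\tfrac2q$, this infimum is exactly $\norm{\phi}_{q/2}\le1$. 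Combining, $\inf_{v\in P}\sup_{\phi\in D}G\le1$, and a near-optimal $v$ (renormalized so that $\int v\,d\mu=1$, which only decreases $G$) furnishes the weight needed in the first step.

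The step I expect to be the main obstacle is making the minimax rigorous. For $q<2$ the space $L^{q/2}(\mu)$ is only a quasi-Banach space, whose topological dual can be trivial, and $P$ is not weak-$*$ compact inside $L^1(\mu)$ — mass can escape to infinity in value or the limiting measure can acquire a singular part, and $t\mapsto t^{-a}$ is not weak-$*$ lower semicontinuous. I would resolve this as Nikishin and Maurey do: first reduce to a finite (indeed probability) measure by an equivalent change of density, noting that the $f$'s with $\widetilde{T}f\neq0$ are concentrated on a $\sigma$-finite set; then apply the minimax to finite subfamilies of $D$, where an elementary finite-dimensional minimax applies, and pass to the limit over this directed family while exhausting the space by sets on which the competing $\phi$'s are bounded so that no mass escapes. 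If one is content with equivalent rather than identical constants, an acceptable shortcut is to extract a weight only with $\norm{h^{-1}}_{q^{\sharp}}\le1$ and renormalize, avoiding the attainment issue altogether.
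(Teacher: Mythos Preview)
The paper does not prove this statement at all: it is quoted verbatim as \cite[Theorem~1]{Maurey1973} and used as a black box in the proof of Theorem~\ref{coro:usedfactor}. So there is no ``paper's own proof'' to compare against.

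Your sketch is essentially Maurey's original argument and is correct in outline. The easy direction $(i)\Rightarrow(ii)$ via H\"older with $\tfrac1q=\tfrac1{q^{\sharp}}+\tfrac12$ is exactly right. For $(ii)\Rightarrow(i)$, the reinterpretation of $(ii)$ as ``the convex hull $D$ of $\{|\widetilde T(f)|^2:\norm{f}_E\le1\}$ lies in the unit ball of $L^{q/2}$'' together with the minimax over probability densities and the elementary optimization $\inf_{v\in P}\int\phi\,v^{-a}=\bigl(\int\phi^{1/(1+a)}\bigr)^{1+a}$ is the standard route, and your exponent bookkeeping ($1+a=2/q$) is correct. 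The obstacle you identify --- making Ky~Fan/Sion rigorous when $L^{q/2}$ is not locally convex and $P$ is not compact --- is genuine; Maurey handles it by reducing to a finite measure and arguing through finite subfamilies, exactly as you indicate. One small correction: when you say a near-optimal $v$ ``renormalized so that $\int v=1$ only decreases $G$'', this is not automatic (scaling $v$ by $c>1$ multiplies $G$ by $c^{-a}<1$, but a near-optimal $v$ could have $\int v<1$); the clean fix is to take a sequence of near-minimizers and extract a limit, or to invoke the attained minimum on finite-dimensional truncations.
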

\begin{proof}[Proof of Theorem \ref{coro:usedfactor}]
We show that for $E=L^p(v)$ with $p\geq 2$, item (ii) in Theorem \ref{theorem:factorization} is true for any bounded linear operator $\widetilde{T}:E \to L^q$. The desired $h$ is then obtained through item (i) in Theorem \ref{theorem:factorization} with $\widetilde{T}=uT$.

%For $2\leq p<\infty$ item (ii) follows easily from Khinchin's and Minkowski's inequalities (see \cite[Theorem 7.1.3]{albiac2006topics}).  

For $2 \leq p\leq \infty$ and $q\geq1$ we first observe  that $\big\|{\left(\sum_{n=1}^N |f_n|^2\right)^{\frac{1}{2}}}\big\|_{L^p(v)} \leq  \left(\sum_{n=1}^N\norm{ f_n}_{L^p(v)}^2\right)^{\frac{1}{2}}$.
Then \eqref{vsp3} follows from the 
 Grothendieck inequality for  arbitrary Banach lattices
(see \cite[1.a.1]{Lindenstrauss1996} for the definition of a Banach lattice):

 \begin{theorem}[General Grothendieck inequality, \protect{\cite[1.f.14] {Lindenstrauss1996}}]
\label{theorem:groth}
    Let $X$ and $Y$ be Banach lattices. Assume that $\widetilde{T}:Y \to X$ is a bounded linear operator. Then, for every finite sequence $(y_n)_{n=1}^N$ of elements from $Y$, 
    \begin{equation}
\label{eq:groth}\norm{\left(\sum_{n=1}^N |\widetilde{T}(y_n)|^2\right)^{\frac{1}{2}}}_X\leq K_G \norm{\widetilde{T}}_{Y \to X} \norm{\left(\sum_{n=1}^N |y_n|^2\right)^{\frac{1}{2}}}_Y,\end{equation} where $K_G$ is an absolute constant known as the universal Grothendieck constant.
\end{theorem}

 In the non-Banach case, that is, when $q< 1$,
 %For the case $p=\infty$ and $q< 1$,
  we now show that inequality \eqref{eq:groth}   holds true for $X=L^q(u)$ and $Y=L^p(v)$. By Step 2 and the first part of Step 3 in the proof of {\cite[1.f.14] {Lindenstrauss1996}}, it suffices to obtain inequality \eqref{eq:groth} for the finite sequence spaces $Y=\ell^\infty(\{1, \dots, N\})$ and $X= \ell^q(\{1, \dots, N\}).$
To do so, observe  
    \cite[p.~52]{lacey1980notes}
    that
     for any  finite matrix $A$ there exist finite matrices $B$ and $D$ such that $$A=DB$$ and \begin{equation}
    \label{eq:abc}
    \norm{A}_{\ell^\infty \to \ell^q} \approx _q \norm{D}_{\ell^1\to \ell^q} \norm{B}_{\ell^\infty \to \ell^1}. \end{equation}

    Hence, using Khinchin's and Jensen's inequalities we deduce that, for any finite sequence $(y_n)_{n=1}^N\subset Y$,
    \begin{align*}
       \Big\|{\left(\sum_{n=1}^N |A(y_n)|^2\right)^{\frac{1}{2}}}
       \Big\|^q _{\ell^q}&=\norm{\left(\sum_{n=1}^N |DB(y_n)|^2\right)^{\frac{1}{2}}}^q _{\ell^q}\approx_q   \mathbb E \norm{\sum_{n=1}^N \varepsilon _n DB(y_n)}^q _{\ell^q}  \\
        &%\quad\quad
        \leq  \mathbb E \norm{\sum_{n=1}^N \varepsilon _n B(y_n)}^q _{\ell^1} \norm{D}_{\ell^1\to \ell^q}^q\\
        & \leq \left( \mathbb E \norm{\sum_{n=1}^N \varepsilon _n B(y_n)}_{\ell^1}\right)^q  \norm{D}_{\ell^1\to \ell^q}^q\\
        &%\quad\quad
        \approx_q  \norm{\left(\sum_{n=1}^N  |B(y_n)|^2\right)^\frac{1}{2} }^q _{\ell^1} \norm{D}_{\ell^1\to \ell^q}^q \\
        &%\quad\quad
        \leq  K_G^q \norm{B}_{\ell^\infty\to \ell^1}^q \norm{D}_{\ell^1\to \ell^q}^q \norm{\left(\sum_{n=1}^N  |y_n|^2\right)^\frac{1}{2} }^q _{\ell^\infty},
    \end{align*} where in the last inequality we used Theorem \ref{theorem:groth} for $X=\ell^1$. The result now follows by using  \eqref{eq:abc}.
    \end{proof}

\subsection{Hardy's and related inequalities}
\label{subsec:hardy}
In light of Theorem \ref{theorem:mainabstract} and Corollary \ref{coro:method}, the solution of Problem B follows from the characterization of inequality \eqref{ineq:FGuv}. Thus, we will need  the characterizations of several Hardy type inequalities.
\begin{lemma}[\protect{see \cite[Theorem~7]{kufner2007hardy} for $\mathfrak{p}> 1$ and \cite[Theorem~8]{popova} for $\mathfrak{p}\leq 1$}]

\label{theorem:discretehardy}

Let $0<\mathfrak{p},\mathfrak{q}\leq \infty$ and
$\frac{1}{\mathfrak{r}}:= \frac{1}{\mathfrak{q}}-\frac{1}{\mathfrak{p}}$. Let $u,v\geq 0$. The smallest constant $K$ for which the inequality 
\begin{equation}
\label{eq:discretehar}
\left(\sum_{n=1}^\infty u_n \left(\sum_{j=n}^{\infty} x_j \right)^\mathfrak{q} \right)^{\frac{1}{\mathfrak{q}}} \leq K \left(\sum_{n=1}^\infty v_n x_n^\mathfrak{p} \right)^{\frac{1}{\mathfrak{p}}},\qquad \mathfrak{q}<\mathfrak{p},
\end{equation}
holds for any non-negative sequence $(x_n)_{n=1}^\infty $ satisfies
\begin{enumerate}[label=(\roman*)]
   
\item for $\mathfrak{p}\leq 1$,
\begin{equation}
\label{eq:discretehardyp-}
    K\approx_{\mathfrak{p},\mathfrak{q}} \left(\sum_{n=1}^\infty u_n \left(\sum_{j=1}^n u_j \right)^{\frac{\mathfrak{r}}{\mathfrak{p}}} \left(\inf_{j\geq n} v_j \right)^{-\frac{\mathfrak{r}}{\mathfrak{p}}}\right)^{\frac{1}{\mathfrak{r}}};
\end{equation}
\item for $1<\mathfrak{p}$, 
\begin{equation}
\label{eq:discretehardyp+}K\approx_{\mathfrak{p},\mathfrak{q}} \left(\sum_{n=1}^\infty u_n \left(\sum_{j=1}^n u_j \right)^{\frac{\mathfrak{r}}{\mathfrak{p}}} \left(\sum_{j=n}^\infty v_j^{\frac{1}{1-\mathfrak{p}}} \right)^{\frac{\mathfrak{r}}{\mathfrak{p}'}}\right)^{\frac{1}{\mathfrak{r}}}.\end{equation}
\end{enumerate}
%where $r^{-1}=q^{-1}-p^{-1}$.
\end{lemma}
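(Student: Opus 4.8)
The statement is the standard pair of characterizations of the best constant in the discrete \emph{conjugate} (``dual'') Hardy inequality \eqref{eq:discretehar} in the range $\mathfrak{q}<\mathfrak{p}$; both formulas are in the cited works \cite{kufner2007hardy,popova}, so for the paper itself it suffices to quote them, but here is the route one would take to prove them from scratch. The plan is to prove sufficiency (the stated quantity being finite forces \eqref{eq:discretehar}) and necessity separately, and to split each into the cases $\mathfrak{p}>1$ and $\mathfrak{p}\le 1$, since $\ell^{\mathfrak{p}}(v)$ is a Banach space in the first case and only a quasi-Banach space (with a \emph{concave} $\mathfrak{p}$-th power) in the second. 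A useful guide is that the inner ``reverse-H\"older'' factor $\big(\sum_{j\ge n}v_j^{1/(1-\mathfrak{p})}\big)^{1/\mathfrak{p}'}$ appearing in \eqref{eq:discretehardyp+} degenerates, as $\mathfrak{p}\downarrow 1$, into $\big(\inf_{j\ge n}v_j\big)^{-1}$, which is exactly the factor in \eqref{eq:discretehardyp-}: the $\mathfrak{p}\le 1$ case is the limiting regime in which this $\ell^{\mathfrak{p}'-1}$-type tail norm collapses to an $\ell^\infty$-type supremum.

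For $\mathfrak{p}>1$, sufficiency is the discrete Maz'ya--Rosin scheme. A single application of H\"older's inequality to the tail, $\sum_{j\ge n}x_j\le A\,\big(\sum_{j\ge n}v_j^{1/(1-\mathfrak p)}\big)^{1/\mathfrak p'}$ with $A:=(\sum_j v_j x_j^{\mathfrak p})^{1/\mathfrak p}$, only yields the cruder ($\mathfrak{q}$-exponent) sufficient condition; to reach the sharp $\mathfrak r$-exponent condition one performs a geometric decomposition of the index set along the level sets of the partial sums $\Sigma_n:=\sum_{j=1}^n u_j$, i.e.\ $\Delta_k:=\{n:\ 2^k\le\Sigma_n<2^{k+1}\}$, splits the tail $\sum_{j\ge n}x_j$ into the contributions of the blocks $\Delta_\ell$ with $\ell\ge k$, applies H\"older on each block with the conjugate exponents $\mathfrak p/\mathfrak q$ and $(\mathfrak p/\mathfrak q)'=\mathfrak r/\mathfrak q$, and finally sums the resulting geometric series in $k$. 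Necessity is obtained by inserting near-extremal (Sawyer-type) test sequences adapted to the same blocks, of the shape $x_j\sim v_j^{1/(1-\mathfrak p)}\big(\sum_{i\ge j}v_i^{1/(1-\mathfrak p)}\big)^{a}$ suitably truncated, computing both sides by Abel summation, and optimising the free parameter $a$.

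For $\mathfrak{p}\le 1$ duality is unavailable and one argues directly. The key reduction: since $t\mapsto t^{\mathfrak p}$ is concave, for a prescribed tail profile $B_n:=\sum_{j\ge n}x_j$ (equivalently, prescribed increments $x_n=B_n-B_{n+1}\ge 0$) the quantity $\sum_j v_j x_j^{\mathfrak p}$ is comparable, with constants depending only on $\mathfrak p$, to $\sum_n\big(\inf_{j\ge n}v_j\big)x_n^{\mathfrak p}$ --- morally, one may move all the mass of $x$ supported in $[n,\infty)$ onto the single index in that range at which $v$ is smallest. This turns \eqref{eq:discretehar} into an inequality of the same type with $v_n$ replaced by the non-decreasing sequence $\tilde v_n:=\inf_{j\ge n}v_j$ and with a trivial reverse H\"older bound ($\ell^\infty$ in place of $\ell^{\mathfrak p'}$) on the tail, after which the blocking argument of the previous paragraph (now involving a genuine supremum) produces \eqref{eq:discretehardyp-}; necessity follows from a test sequence concentrated on one well-chosen index per block.

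The main obstacle is twofold. First, in both cases one must keep all constants depending only on $\mathfrak p,\mathfrak q$ while passing through the geometric decomposition: the discrete ``boundary'' terms --- a single $u_n$ comparable to $\Sigma_n$, or a single $v_n^{1/(1-\mathfrak p)}$ comparable to the whole relevant tail sum --- have to be absorbed, and this is precisely where the hypothesis $\mathfrak q<\mathfrak p$ is used in an essential way (the corresponding statement for $\mathfrak q\ge\mathfrak p$ has a different, supremum-type form). Second, in the $\mathfrak p\le 1$ range the concentration reduction cannot be carried out by literally replacing $x$ with a one-atom sequence; the comparison $\sum_j v_j x_j^{\mathfrak p}\approx\sum_n\tilde v_n x_n^{\mathfrak p}$ must be proved with a controlled constant, e.g.\ by a layer-cake/rearrangement argument applied to each tail. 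Beyond these points the argument is the classical template for Hardy inequalities with $\mathfrak q<\mathfrak p$, and since both formulas appear in \cite{kufner2007hardy} and \cite{popova}, it is enough to cite them.
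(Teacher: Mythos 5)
The paper proves this lemma by citation alone, and you correctly note that quoting \cite{kufner2007hardy,popova} suffices, so on that level the ``approach'' agrees. Your supplementary sketch follows the standard Maz'ya--Rosin blocking template and is plausible in outline for $\mathfrak p>1$.

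However, the key reduction you propose for $\mathfrak p\le 1$ is false as stated. It is not true that $\sum_j v_j x_j^{\mathfrak p}$ is comparable, uniformly over $x\ge 0$ with a constant depending only on $\mathfrak p$, to $\sum_j\bigl(\inf_{i\ge j}v_i\bigr)x_j^{\mathfrak p}$: take $v=(1,0,0,\dots)$ and $x=(1,0,0,\dots)$, which makes the left side $1$ and the right side $0$. (Your framing ``for a prescribed tail profile'' does not help, since the tail profile $B_n=\sum_{j\ge n}x_j$ determines $x$ uniquely, leaving nothing to optimize.) The correct statement is a comparison of \emph{best constants}: for any admissible $x$ one may form $\tilde x$ by moving the mass $x_n$ to the first index $m(n)\ge n$ realizing $\inf_{j\ge n}v_j$; this can only increase each tail sum $\sum_{j\ge n}x_j$, while subadditivity of $t\mapsto t^{\mathfrak p}$ gives $\sum_n v_n\tilde x_n^{\mathfrak p}\le\sum_n v_n x_n^{\mathfrak p}$, so the supremum defining $K$ is unchanged if $v$ is replaced by its monotone envelope $\tilde v_n:=\inf_{j\ge n}v_j$. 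Even more directly, and closer to what the literature actually does, one skips the envelope reduction entirely and uses $\bigl(\sum_{j\ge n}x_j\bigr)^{\mathfrak p}\le\sum_{j\ge n}x_j^{\mathfrak p}\le\bigl(\inf_{j\ge n}v_j\bigr)^{-1}\sum_{j\ge n}v_j x_j^{\mathfrak p}$ as the $\mathfrak p\le 1$ substitute for H\"older inside the blocking argument. Since you defer to the cited references for the actual proof, this flaw does not affect the paper, but the intermediate comparability you wrote would not survive if the sketch were expanded.
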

\begin{lemma}[\protect{\cite[Theorem~5]{kufner2007hardy}}]

\label{theorem:cont hardy2}

Let $0<\mathfrak{q}\leq \infty$ and $1 \leq \mathfrak{p} \leq \infty$. Let $u,v\geq 0$. The smallest constant $K$ for which the inequality 

\begin{equation}
\label{eq:conthardyp2}
\left(\int_{0}^\infty u \left(\int_{0}^{x} g \right)^\mathfrak{q} \right)^{\frac{1}{\mathfrak{q}}} \leq K \left(\int_{0}^\infty v g^\mathfrak{p} \right)^{\frac{1}{\mathfrak{p}}}
\end{equation}
holds for any non-negative function $g$ satisfies
\begin{enumerate}[label=(\roman*)]
    \item for $\mathfrak{q}\geq \mathfrak{p}$ 
    \begin{equation}
\label{eq:condiconthardy21}K\approx_{\mathfrak{p},\mathfrak{q}} \sup_{x>0} \left(\int_x ^\infty u \right)^{\frac{1}{\mathfrak{q}}} \left(\int_0 ^x v^{\frac{1}{1-\mathfrak{p}}}\right)^\frac{1}{\mathfrak{p}'};\end{equation}
    \item for $\mathfrak{q}<\mathfrak{p}$ \begin{equation}
\label{eq:condiconthardy2}K\approx_{\mathfrak{p},\mathfrak{q}}\left(\int_{0}^\infty u \left(\int_{x}^\infty u \right)^{\frac{\mathfrak{r}}{\mathfrak{p}}} \left(\int_{0}^x v^{\frac{1}{1-\mathfrak{p}}} \right)^{\frac{\mathfrak{r}}{\mathfrak{p}'}}\right)^{\frac{1}{\mathfrak{r}}}.\end{equation}

\end{enumerate}

\end{lemma}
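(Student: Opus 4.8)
The plan is to read \eqref{eq:conthardyp2} as the boundedness of the primitive operator $g\mapsto\phi_g:=\int_0^{\cdot}g$ from $L^{\mathfrak p}(v^{1/\mathfrak p})$ to $L^{\mathfrak q}(u^{1/\mathfrak q})$ and to run the classical ``blocking'' argument, which in the off-diagonal case $\mathfrak q<\mathfrak p$ dovetails with the discrete Hardy inequality already recorded in Lemma~\ref{theorem:discretehardy}. Throughout put
\begin{equation*}
U(x):=\int_x^\infty u,\qquad W(x):=\int_0^x v^{\frac1{1-\mathfrak p}}
\end{equation*}
(for $\mathfrak p=1$ read $W(x)=\|v^{-1}\|_{L^\infty(0,x)}$; for $\mathfrak p=\infty$ read $W(x)=\int_0^x v^{-1}$); the Hölder inequality gives $\int_0^x g\le\big(\int_0^x g^{\mathfrak p}v\big)^{1/\mathfrak p}W(x)^{1/\mathfrak p'}$. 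It suffices to treat $g\ge0$, so that $\phi_g$ is non-decreasing; by monotone convergence one may also assume $0<W(x)<\infty$ for all $x$ and $\phi_g(\infty)=\infty$, the degenerate cases following by truncation and a routine limiting argument.

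\emph{Necessity.} In case (i) test with $g_x:=v^{1/(1-\mathfrak p)}\mathbbm{1}_{(0,x)}$: the right side of \eqref{eq:conthardyp2} equals $K\,W(x)^{1/\mathfrak p}$, while $\phi_{g_x}(y)=W(x)$ for $y\ge x$, so the left side is at least $W(x)\,U(x)^{1/\mathfrak q}$; dividing by $W(x)^{1/\mathfrak p}$ and taking the supremum over $x$ yields the lower bound for $K$ in \eqref{eq:condiconthardy21}. In case (ii) one uses instead a power-type test function $g:=v^{1/(1-\mathfrak p)}W^{\alpha}U^{\gamma}$, truncated to $(\varepsilon,N)$, with $\alpha,\gamma$ chosen so that, after integrating by parts in $U$ and in $W$, both sides of \eqref{eq:conthardyp2} become comparable to $\big(\int_0^\infty u\,U^{\mathfrak r/\mathfrak p}W^{\mathfrak r/\mathfrak p'}\big)^{1/\mathfrak r}$; letting $\varepsilon\to0$, $N\to\infty$ gives the lower bound in \eqref{eq:condiconthardy2}. (Alternatively, testing with step functions $g=\sum_k c_k\mathbbm{1}_{E_k}$ reduces this to the necessity half of Lemma~\ref{theorem:discretehardy} plus a sum-versus-integral comparison.)

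\emph{Sufficiency.} Fix $g\ge0$ and pick points $x_k$ (a two-sided sequence, truncated at the ends if needed) with $\phi_g(x_k)=2^k$; write $a_k:=\int_{x_{k-1}}^{x_k}v\,g^{\mathfrak p}$, $w_k:=\int_{x_{k-1}}^{x_k}v^{1/(1-\mathfrak p)}$, $U_k:=\int_{x_k}^{x_{k+1}}u$. Since $\phi_g\le2^{k+1}$ on $[x_k,x_{k+1})$, $\phi_g$ increases by $2^{k-1}$ across $[x_{k-1},x_k)$, and $w_k\le W(x_k)\le W(x)$ for $x\in[x_k,x_{k+1})$, Hölder on the block $[x_{k-1},x_k)$ gives $2^{k}\le2\,a_k^{1/\mathfrak p}w_k^{1/\mathfrak p'}$, whence
\begin{equation*}
\int_0^\infty u\,\phi_g^{\mathfrak q}\;\le\;2^{\mathfrak q}\sum_k 2^{k\mathfrak q}U_k\;\le\;4^{\mathfrak q}\sum_k a_k^{\mathfrak q/\mathfrak p}\,w_k^{\mathfrak q/\mathfrak p'}U_k ,\qquad \sum_k a_k\le\int_0^\infty v\,g^{\mathfrak p}.
\end{equation*}
If $\mathfrak q\ge\mathfrak p$, bound $U_k\le U(x_k)$ and use the hypothesis \eqref{eq:condiconthardy21} (with supremum $A$) to get $U(x_k)\le A^{\mathfrak q}w_k^{-\mathfrak q/\mathfrak p'}$, so that the sum is $\le(2A)^{\mathfrak q}\sum_k a_k^{\mathfrak q/\mathfrak p}\le(2A)^{\mathfrak q}\big(\sum_k a_k\big)^{\mathfrak q/\mathfrak p}$ (using $\mathfrak q/\mathfrak p\ge1$), giving $K\lesssim A$. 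If $\mathfrak q<\mathfrak p$, Hölder for sums with exponents $\mathfrak p/\mathfrak q$ and $(\mathfrak p/\mathfrak q)'=\mathfrak r/\mathfrak q$ yields
\begin{equation*}
\int_0^\infty u\,\phi_g^{\mathfrak q}\;\lesssim\;\Big(\int_0^\infty v\,g^{\mathfrak p}\Big)^{\mathfrak q/\mathfrak p}\Big(\sum_k w_k^{\mathfrak r/\mathfrak p'}U_k^{\mathfrak r/\mathfrak q}\Big)^{\mathfrak q/\mathfrak r},
\end{equation*}
and it remains to show $\sum_k w_k^{\mathfrak r/\mathfrak p'}U_k^{\mathfrak r/\mathfrak q}\lesssim\int_0^\infty u\,U^{\mathfrak r/\mathfrak p}W^{\mathfrak r/\mathfrak p'}$. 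Here $U_k=U(x_k)-U(x_{k+1})$, and since $\mathfrak r/\mathfrak q=1+\mathfrak r/\mathfrak p>1$, superadditivity of $t\mapsto t^{\mathfrak r/\mathfrak q}$ gives $U_k^{\mathfrak r/\mathfrak q}\le U(x_k)^{\mathfrak r/\mathfrak q}-U(x_{k+1})^{\mathfrak r/\mathfrak q}=\tfrac{\mathfrak r}{\mathfrak q}\int_{x_k}^{x_{k+1}}u\,U^{\mathfrak r/\mathfrak p}$; multiplying by $w_k^{\mathfrak r/\mathfrak p'}\le W(x)^{\mathfrak r/\mathfrak p'}$ on $[x_k,x_{k+1})$ and summing over $k$ produces exactly the integral on the right of \eqref{eq:condiconthardy2}, so $K\lesssim\big(\int_0^\infty u\,U^{\mathfrak r/\mathfrak p}W^{\mathfrak r/\mathfrak p'}\big)^{1/\mathfrak r}$. (One may instead invoke Lemma~\ref{theorem:discretehardy} for the discrete data $(U_k),(w_k)$ and then pass back by the same monotone comparison.)

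\emph{The main obstacle.} Everything above is routine except, in case (ii), the conversion of the discrete quantity $\sum_k w_k^{\mathfrak r/\mathfrak p'}U_k^{\mathfrak r/\mathfrak q}$ into the integral $\int_0^\infty u\,U^{\mathfrak r/\mathfrak p}W^{\mathfrak r/\mathfrak p'}$: the partition $\{x_k\}$ is adapted to $\phi_g$, not to $U$ or $W$, so a block-by-block comparison is not automatic, and one is forced to use the telescoping identity for $U^{\mathfrak r/\mathfrak q}$ together with superadditivity of the power $\mathfrak r/\mathfrak q>1$—precisely the point at which the off-diagonal gain $\mathfrak r<\infty$ is exploited. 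The remaining care concerns the endpoints $\mathfrak p\in\{1,\infty\}$ and $\mathfrak q=\infty$, the interpretation of $v^{1/(1-\mathfrak p)}$ when $v^{-1/(\mathfrak p-1)}$ is not locally integrable, and the truncation of $(x_k)$ when $g$ fails to be integrable near $0$ or near $\infty$; all of these are disposed of by the limiting arguments indicated at the outset.
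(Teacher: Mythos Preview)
The paper does not prove this lemma; it is quoted from \cite[Theorem~5]{kufner2007hardy} as a known tool, so there is no in-paper argument to compare against. Your outline is the standard blocking proof of the weighted Hardy inequality and is correct in substance: the level-set decomposition $\phi_g(x_k)=2^k$, the block H\"older estimate $2^{k-1}\le a_k^{1/\mathfrak p}w_k^{1/\mathfrak p'}$, and---the one genuinely delicate step, which you identify correctly---the telescoping via superadditivity of $t\mapsto t^{\mathfrak r/\mathfrak q}$ to convert $\sum_k w_k^{\mathfrak r/\mathfrak p'}U_k^{\mathfrak r/\mathfrak q}$ into $\int u\,U^{\mathfrak r/\mathfrak p}W^{\mathfrak r/\mathfrak p'}$ are exactly the classical devices.

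The only place where your write-up is genuinely incomplete is the necessity half of case~(ii). The sentence ``power-type test function $g=v^{1/(1-\mathfrak p)}W^\alpha U^\gamma$ with $\alpha,\gamma$ chosen so that \dots\ both sides become comparable'' names the right idea but is not a proof: you specify neither the exponents nor the integrations by parts, and the computation is not entirely mechanical (one must check that the two resulting integrals both collapse to a constant multiple of $\int u\,U^{\mathfrak r/\mathfrak p}W^{\mathfrak r/\mathfrak p'}$, which constrains $\alpha,\gamma$ uniquely). The alternative you offer---step functions plus Lemma~\ref{theorem:discretehardy}---is cleaner but also needs the discrete-to-continuous comparison written out. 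In short: your sufficiency argument is a complete proof; your necessity argument in~(ii) is a pointer to one.
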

\begin{lemma}[\protect{\cite[Theorem~6]{kufner2007hardy}}] 
\label{theorem:cont hardy}
Let $0<\mathfrak{q}\leq \infty$ and $1 \leq \mathfrak{p} \leq \infty$. Let $u,v\geq 0$. The smallest constant $K$ for which the inequality

\begin{equation}
\label{eq:conthardyp}
\left(\int_{0}^\infty u \left(\int_{x}^{\infty} g \right)^\mathfrak{q} \right)^{\frac{1}{\mathfrak{q}}} \leq K \left(\int_{0}^\infty v g^\mathfrak{p} \right)^{\frac{1}{\mathfrak{p}}}
\end{equation}
holds for any non-negative function $g$ satisfies

\begin{enumerate}[label=(\roman*)]
\item for $\mathfrak{q}\geq \mathfrak{p}$ 
    \begin{equation}
\label{eq:condiconthardy212}K\approx_{\mathfrak{p},\mathfrak{q}} \sup_{x>0} \left(\int_0 ^x u \right)^{\frac{1}{\mathfrak{q}}} \left(\int_x ^\infty v^{\frac{1}{1-\mathfrak{p}}}\right)^\frac{1}{\mathfrak{p}'};\end{equation}
\item for $\mathfrak{q}<\mathfrak{p}$
\begin{equation}
\label{eq:condiconthardy}K\approx_{\mathfrak{p},\mathfrak{q}} \left(\int_{0}^\infty u \left(\int_{0}^x u \right)^{\frac{\mathfrak{r}}{\mathfrak{p}}} \left(\int_{x}^\infty v^{\frac{1}{1-\mathfrak{p}}} \right)^{\frac{\mathfrak{r}}{\mathfrak{p}'}}\right)^{\frac{1}{\mathfrak{r}}}.\end{equation}
\end{enumerate}
%where $r^{-1}=q^{-1}-p^{-1}$.
\end{lemma}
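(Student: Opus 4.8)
The plan is to reduce Lemma~\ref{theorem:cont hardy} to the forward Hardy inequality of Lemma~\ref{theorem:cont hardy2} by means of the order-reversing substitution $x\mapsto 1/x$, and then to translate the known criteria through this change of variables. Given a non-negative $g$, set $h(t):=g(1/t)\,t^{-2}$; the map $g\mapsto h$ is a bijection of the cone of non-negative measurable functions. Since $\int_{1/y}^{\infty}g=\int_0^{y}h$ for every $y>0$, the substitutions $x=1/y$ in the outer variable and $s=1/t$ in the inner one give
\begin{equation*}
\int_0^\infty u(x)\Big(\int_x^\infty g\Big)^{\mathfrak q}\,dx=\int_0^\infty \widetilde u(y)\Big(\int_0^y h\Big)^{\mathfrak q}\,dy,\qquad \widetilde u(y):=u(1/y)\,y^{-2},
\end{equation*}
while $g(1/t)=h(t)\,t^{2}$ yields $\int_0^\infty v\,g^{\mathfrak p}=\int_0^\infty \widetilde v(t)\,h(t)^{\mathfrak p}\,dt$ with $\widetilde v(t):=v(1/t)\,t^{2\mathfrak p-2}$. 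By Tonelli's theorem these identities hold even when both sides are infinite, so \eqref{eq:conthardyp} holds with constant $K$ if and only if \eqref{eq:conthardyp2} holds for the pair $(\widetilde u,\widetilde v)$ with the same $K$.

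It remains to express the criteria of Lemma~\ref{theorem:cont hardy2} for $(\widetilde u,\widetilde v)$ back in terms of $(u,v)$. The substitution $s=1/x$ gives $\int_y^\infty\widetilde u=\int_0^{1/y}u$; moreover $(2\mathfrak p-2)/(1-\mathfrak p)=-2$, so $\widetilde v^{1/(1-\mathfrak p)}(s)=v(1/s)^{1/(1-\mathfrak p)}s^{-2}$ and therefore $\int_0^y\widetilde v^{1/(1-\mathfrak p)}=\int_{1/y}^\infty v^{1/(1-\mathfrak p)}$. In the case $\mathfrak q\ge\mathfrak p$, substituting $x=1/y$ in the supremum defining \eqref{eq:condiconthardy21} produces exactly \eqref{eq:condiconthardy212}. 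In the case $\mathfrak q<\mathfrak p$, the outer substitution $y=1/x$ has Jacobian cancelling the weight, $\widetilde u(y)\,dy=u(x)\,dx$, so the integral in \eqref{eq:condiconthardy2} becomes $\int_0^\infty u(x)\big(\int_0^x u\big)^{\mathfrak r/\mathfrak p}\big(\int_x^\infty v^{1/(1-\mathfrak p)}\big)^{\mathfrak r/\mathfrak p'}\,dx$, which is \eqref{eq:condiconthardy}. The equivalence constants are transported unchanged, so they remain $\approx_{\mathfrak p,\mathfrak q}$.

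Every step is elementary, and I do not expect a genuine obstacle; the only points demanding care are the endpoint conventions, which one inherits verbatim from Lemma~\ref{theorem:cont hardy2}: for $\mathfrak p=1$ the factor $\big(\int v^{1/(1-\mathfrak p)}\big)^{1/\mathfrak p'}$ is to be read as the essential supremum of $1/v$, and for $\mathfrak q=\infty$ the outer integral is an essential supremum, in which case monotone convergence justifies the identities above. If one wished to avoid invoking Lemma~\ref{theorem:cont hardy2}, the same conclusion follows directly: for sufficiency, decompose $(0,\infty)$ dyadically relative to the point $x$ and apply H\"older on each block together with Minkowski's inequality in $L^{\mathfrak q/\mathfrak p}$ when $\mathfrak q\ge\mathfrak p$ (respectively H\"older with exponent $\mathfrak p/\mathfrak q$ when $\mathfrak q<\mathfrak p$); for necessity, feed in the truncated test functions $g=v^{1/(1-\mathfrak p)}\mathbbm{1}_{[x,\infty)}$ and optimise over $x$. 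The reduction-based route is shorter, so that is the one I would write.
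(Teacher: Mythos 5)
The paper does not prove Lemma~\ref{theorem:cont hardy}: it is quoted, together with its companion Lemma~\ref{theorem:cont hardy2}, directly from \cite[Theorems~5--6]{kufner2007hardy}, so there is no in-paper argument for you to match. Your reduction is nonetheless correct and is the standard way to derive the ``backward'' Hardy inequality from the ``forward'' one. The change of variables is clean: $h(t)=g(1/t)t^{-2}$ is a bijection of nonnegative functions satisfying $\int_{1/y}^\infty g=\int_0^y h$, the outer substitution gives $\widetilde u(y)=u(1/y)y^{-2}$, the Jacobian on the right-hand side gives $\widetilde v(t)=v(1/t)t^{2\mathfrak p-2}$, and the identity $(2\mathfrak p-2)/(1-\mathfrak p)=-2$ makes $\widetilde v^{1/(1-\mathfrak p)}(t)=v(1/t)^{1/(1-\mathfrak p)}t^{-2}$, so that $\int_y^\infty\widetilde u=\int_0^{1/y}u$ and $\int_0^y\widetilde v^{1/(1-\mathfrak p)}=\int_{1/y}^\infty v^{1/(1-\mathfrak p)}$; both criteria then transform as you claim. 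One minor remark: you invoke ``Tonelli's theorem'' for the identities, but what you actually use is that the change-of-variables formula for the Lebesgue integral holds unconditionally for nonnegative measurable integrands, including when both sides are infinite — there is no iterated integral to interchange. Your observation that the endpoint conventions ($\mathfrak p=1$, $\mathfrak q=\infty$) carry over verbatim is also right. In short, the argument is a valid self-contained proof assuming Lemma~\ref{theorem:cont hardy2}, which in turn the paper takes from the same reference; what you have shown is that the paper only needed to cite one of the two.
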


We will also need several inequalities for monotone functions. The next two results are reduction theorems for non-increasing functions, which transform inequalities for monotone functions into equivalent estimates for non-negative functions.

%Finally, in order to obtain the explicit conditions of \eqref{ineq:FGxy} given in Theorem \ref{theorem:mainh} we need to use the following three more technical results:

\begin{theorem}[\protect{\cite[Theorem~3.1]{stepanov}}]
\label{theorem:gogatoriginal}
Let $1 \leq \mathfrak{p}<\infty$. Let $H$ be a operator on positive functions defined on $\mathbb{R}_+$ 
with the property that
$H(f)\leq H(g)$ for $f\leq g$.
%and $H(f+ \lambda 1) \lesssim H(f)+ \lambda H(1)$, where $1$ is the constant function equal to 1 on $\mathbb{R}_+$.
Let also $b$ be a non-negative function such that 
$\int_0^\infty b=\infty$.
 Then the following are equivalent:
\begin{enumerate}[label=(\roman*)]
    \item for any non-increasing function $f$  the inequality
    \begin{equation}
    H(f)\leq K_1 \left(\int_0^\infty b f^\mathfrak{p}\right)^\frac{1}{\mathfrak{p}}
    \end{equation}
holds;
 \item for any non-negative function $g$ the  inequality
    \begin{equation}
   H(\bar{G}) \leq K_2 \left(\int_0 ^\infty b^{1-\mathfrak{p}} B ^\mathfrak{p}  g^\mathfrak{p} \right)^\frac{1}{\mathfrak{p}},
   \end{equation}
holds, where $\bar{G}(x)=\int_x^\infty g$ and
$B(x)=\int_0^x b$. 
%, and
%\begin{equation}
%    H(1)\leq K_3 B_\infty ^{\frac{1}{p}}.\end{equation}

\end{enumerate}
Moreover, $K_1\approx_{\mathfrak{p}} K_2$.
\end{theorem}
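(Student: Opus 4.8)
The two implications are of opposite character, so I would prove them separately: $(i)\Rightarrow(ii)$ by reducing to a weighted Hardy inequality, and $(ii)\Rightarrow(i)$ by an explicit dyadic construction. First I would dispose of the degenerate cases: if $b\notin L^1_{\mathrm{loc}}(0,\infty)$ then $B\equiv\infty$ on an interval and both inequalities are trivial, so I may assume $b\in L^1_{\mathrm{loc}}(0,\infty)$, which together with $\int_0^\infty b=\infty$ makes $B$ a continuous non-decreasing surjection of $(0,\infty)$ onto itself; and in proving $(ii)\Rightarrow(i)$ I may assume $\int_0^\infty b f^{\mathfrak p}<\infty$, which forces $f(\infty)=0$, since $f(\infty)=c>0$ would give $\int_0^\infty b f^{\mathfrak p}\ge c^{\mathfrak p}\int_0^\infty b=\infty$.

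For $(i)\Rightarrow(ii)$: given $g\ge 0$, I would apply the hypothesis $(i)$ to the non-increasing function $f:=\bar G=\int_\cdot^\infty g$, obtaining $H(\bar G)\le K_1\big(\int_0^\infty b\,\bar G^{\mathfrak p}\big)^{1/\mathfrak p}$, and then invoke the weighted Hardy inequality
\[
\int_0^\infty b(x)\Big(\int_x^\infty g\Big)^{\mathfrak p}\,dx\ \lesssim_{\mathfrak p}\ \int_0^\infty b(x)^{1-\mathfrak p}B(x)^{\mathfrak p}g(x)^{\mathfrak p}\,dx .
\]
For $\mathfrak p>1$ this is exactly \eqref{eq:conthardyp} with $\mathfrak q=\mathfrak p$, $u=b$, $v=b^{1-\mathfrak p}B^{\mathfrak p}$, and by Lemma \ref{theorem:cont hardy}(i) I would only need to note that the supremum in \eqref{eq:condiconthardy212} depends on $\mathfrak p$ alone: indeed $\int_0^x u=B(x)$ and $\int_x^\infty v^{1/(1-\mathfrak p)}=\int_x^\infty b\,B^{-\mathfrak p'}=(\mathfrak p'-1)^{-1}B(x)^{1-\mathfrak p'}$, so the product equals $(\mathfrak p'-1)^{-1/\mathfrak p'}$ since $\tfrac1{\mathfrak p}+\tfrac{1-\mathfrak p'}{\mathfrak p'}=0$. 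For $\mathfrak p=1$, Fubini gives $\int_0^\infty b\,\bar G=\int_0^\infty Bg$ with equality. This would yield $K_2\lesssim_{\mathfrak p}K_1$.

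For $(ii)\Rightarrow(i)$, which is the substantial part: given non-increasing $f$, I would set $s_k:=\inf\{x>0:B(x)\ge 2^k\}$ for $k\in\mathbb{Z}$, so that $s_k\downarrow0$, $s_k\uparrow\infty$, $(0,\infty)=\bigsqcup_k\Delta_k$ with $\Delta_k:=[s_k,s_{k+1})$, and $\int_{\Delta_k}b=2^k$, and then take $g$ to be, on each $B$-dyadic block, a multiple of $b$ whose block mass is a telescoping difference of $f$:
\[
g\ :=\ \sum_{k\in\mathbb{Z}}\frac{f(s_{k-1})-f(s_{k+1})}{B(s_{k+1})-B(s_k)}\;b\;\mathbbm{1}_{\Delta_k}\ \ge\ 0 .
\]
Then $\int_{\Delta_k}g=f(s_{k-1})-f(s_{k+1})$, so the tails telescope: $\bar G(s_k)=\sum_{j\ge k}\big(f(s_{j-1})-f(s_{j+1})\big)=f(s_{k-1})+f(s_k)<\infty$ (here $f(\infty)=0$ enters), and for $x\in\Delta_k$ one has $\bar G(x)\ge\bar G(s_{k+1})=f(s_k)+f(s_{k+1})\ge f(s_k)\ge f(x)$; hence $\bar G\ge f$ pointwise, and by monotonicity of $H$ together with $(ii)$, $H(f)\le H(\bar G)\le K_2\big(\int_0^\infty b^{1-\mathfrak p}B^{\mathfrak p}g^{\mathfrak p}\big)^{1/\mathfrak p}$.

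It then remains to bound $\int_0^\infty b^{1-\mathfrak p}B^{\mathfrak p}g^{\mathfrak p}$ by $\int_0^\infty bf^{\mathfrak p}$ up to a $\mathfrak p$-dependent constant. Using $B\le 2^{k+1}$ and $\int_{\Delta_k}b=2^k$ on $\Delta_k$, one gets $\int_{\Delta_k}bB^{\mathfrak p}\lesssim_{\mathfrak p}2^{k(\mathfrak p+1)}$ and hence $\int_{\Delta_k}b^{1-\mathfrak p}B^{\mathfrak p}g^{\mathfrak p}=\big(2^{-k}(f(s_{k-1})-f(s_{k+1}))\big)^{\mathfrak p}\int_{\Delta_k}bB^{\mathfrak p}\lesssim_{\mathfrak p}2^k f(s_{k-1})^{\mathfrak p}$; summing and reindexing, and comparing with the elementary $\int_{\Delta_{k-1}}bf^{\mathfrak p}\ge f(s_k)^{\mathfrak p}\int_{\Delta_{k-1}}b=2^{k-1}f(s_k)^{\mathfrak p}$, gives $\sum_k 2^k f(s_{k-1})^{\mathfrak p}=4\sum_k 2^{k-1}f(s_k)^{\mathfrak p}\le 4\int_0^\infty bf^{\mathfrak p}$, so $\int_0^\infty b^{1-\mathfrak p}B^{\mathfrak p}g^{\mathfrak p}\lesssim_{\mathfrak p}\int_0^\infty bf^{\mathfrak p}$. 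This proves $(i)$ with $K_1\lesssim_{\mathfrak p}K_2$, and together with the first direction, $K_1\approx_{\mathfrak p}K_2$. I expect the whole difficulty to lie in the choice of $g$ above: the naive $g=-f'$ fails for step functions and a block-constant $g$ fails when $b$ is highly non-uniform, so one is forced both to make $g$ proportional to $b$ on each block and to use the telescoping masses $f(s_{k-1})-f(s_{k+1})$, which is exactly what simultaneously keeps $\bar G$ finite and makes it dominate $f$; everything else — the reductions, and the behaviour of the dyadic grid at $0$ and $\infty$ — should be routine bookkeeping.
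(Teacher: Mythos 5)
The paper does not prove this statement; it is quoted from \cite[Theorem~3.1]{stepanov}, so there is no ``paper's proof'' to compare against. Your overall strategy is sound and closely parallels the reduction arguments in that literature. The $(i)\Rightarrow(ii)$ direction via the sharp weighted Hardy inequality (with the product of integrals in \eqref{eq:condiconthardy212} collapsing to a constant) is exactly the standard computation, and your $(ii)\Rightarrow(i)$ direction is a discretized version of the usual substitution: the reference effectively reduces to $b\equiv 1$ by the change of variable $t=B(x)$ and then takes the continuous analogue $g(x)=f(\sigma(x))\,b(x)/B(x)$ with $B(\sigma(x))=\tfrac12 B(x)$, which gives $\bar G\ge (\log 2)f$ and $\int b^{1-\mathfrak p}B^{\mathfrak p}g^{\mathfrak p}=2\int b f^{\mathfrak p}$ by a second change of variable; your block-by-block construction buys the same estimate with only dyadic bookkeeping and no measurability/inverse-function fuss, at the cost of slightly worse constants.

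There is, however, a genuine gap in your handling of the degenerate cases, which propagates into the main argument. You claim that $b\in L^1_{\mathrm{loc}}$ and $\int_0^\infty b=\infty$ make $B$ ``a continuous non-decreasing surjection of $(0,\infty)$ onto itself,'' and from this you infer $s_k\downarrow 0$. Neither assertion is justified: if $b$ vanishes on an initial interval $(0,a]$ (which is allowed by the hypotheses), then $B\equiv 0$ on $(0,a]$, so $B$ does not even map $(0,\infty)$ into $(0,\infty)$, and $s_k\downarrow a>0$. In that situation your blocks $\Delta_k$ tile only $(a,\infty)$, the function $g$ you construct is supported there, and for $x\le a$ one has $\bar G(x)=\bar G(a^+)=\lim_{k\to-\infty}\bigl(f(s_{k-1})+f(s_k)\bigr)=2f(a^+)$, which may be strictly smaller than $f(x)$ (e.g.\ $f=\mathbbm{1}_{[0,a/2]}$, $\mathfrak p>1$). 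The pointwise domination $\bar G\ge f$ --- the crucial hinge that lets you invoke monotonicity of $H$ --- therefore fails on $(0,a)$, and $H(f)\le H(\bar G)$ is no longer available. This is fixable: on $(0,a]$ the weight $b^{1-\mathfrak p}B^{\mathfrak p}$ in $(ii)$ vanishes (for $\mathfrak p=1$ because $B=0$, for $\mathfrak p>1$ under the usual $0\cdot\infty=0$ convention), so one may add to $g$ a free component supported on $(0,a)$ forcing $\bar G\ge f$ there without changing the right-hand side of $(ii)$; but as written the step is not justified, and the sweeping phrase ``routine bookkeeping\dots at $0$ and $\infty$'' skips precisely the place where the argument actually needs care.
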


\begin{theorem}[\protect{\cite[Theorem 3.12]{stepanov}}]
\label{th:oinarov}
    Let $0<\mathfrak{q}<\mathfrak{p}\leq 1$. Then the following are equivalent:
    \begin{enumerate}[label=(\roman*)]
    \item for any non-increasing function $f$ the inequality
    \begin{equation}
        \left(\int_0 ^\infty w(x) \left(\int_x ^\infty f\right)^\mathfrak{q} dx \right)^{\frac{1}{\mathfrak{q}}} \leq K_1 \left(\int_0 ^\infty f^\mathfrak{p} \nu \right)^{\frac{1}{\mathfrak{p}}}
    \end{equation} holds;
    \item for any non-negative function $g$ the inequality
    \begin{equation}
        \left(\int_0 ^\infty w(x) \left(\int_x ^\infty (y-x)^\mathfrak{p} g(y) dy \right)^\frac{\mathfrak{q}}{\mathfrak{p}} dx\right)^{\frac{\mathfrak{p}}{\mathfrak{q}}} \leq K^\mathfrak{p}_2 \int_0 ^\infty g(x) \left(\int_0 ^x \nu \right) dx
    \end{equation} holds.
    \end{enumerate}
    Moreover, $K_1 \approx_{\mathfrak{p},\mathfrak{q}} K_2$.
\end{theorem}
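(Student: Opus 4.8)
The statement is Theorem~3.12 of \cite{stepanov}; here I sketch the route I would take to prove it. The plan is to relate both inequalities (i) and (ii) to one and the same discrete Hardy inequality, to which Lemma~\ref{theorem:discretehardy} applies, and then read off the equivalence and the constant comparison $K_1\approx_{\mathfrak p,\mathfrak q}K_2$.

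First I would rewrite (i) as an inequality over measures. By the usual truncation and monotone convergence reductions we may assume $f$ is non-increasing with $f(\infty)=0$, so that $f(x)=\sigma\big((x,\infty)\big)$ for a non-negative Borel measure $\sigma$ (take $\sigma=-df$). Then $\int_x^\infty f=\int_{(x,\infty)}(y-x)\,d\sigma(y)$ and $\int_0^\infty f^{\mathfrak p}\nu=\int_0^\infty\sigma((x,\infty))^{\mathfrak p}\nu(x)\,dx$, so (i) is equivalent to
\begin{equation*}
\Big(\int_0^\infty w(x)\Big(\int_{(x,\infty)}(y-x)\,d\sigma(y)\Big)^{\mathfrak q}dx\Big)^{1/\mathfrak q}\le K_1\Big(\int_0^\infty\sigma((x,\infty))^{\mathfrak p}\nu(x)\,dx\Big)^{1/\mathfrak p}
\end{equation*}
for all $\sigma\ge0$. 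Both this form and (ii) are Hardy-type inequalities whose kernels, $(y-x)_+$ and $(y-x)_+^{\mathfrak p}$, satisfy the Oinarov-type regularity $k(x,z)\approx_{\mathfrak p}k(x,y)+k(y,z)$ for $0\le x\le y\le z$ (for $(y-x)_+$ this is an identity; for $(y-x)_+^{\mathfrak p}$ it follows from $(a+b)^{\mathfrak p}\le a^{\mathfrak p}+b^{\mathfrak p}\le 2^{1-\mathfrak p}(a+b)^{\mathfrak p}$).

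Second I would discretize on a geometric grid $\{2^k\}_{k\in\mathbb Z}$. Writing $a_k=\sigma([2^k,2^{k+1}))$ for $\sigma$, and $t_k=\int_{2^k}^{2^{k+1}}(y-2^k)^{\mathfrak p}g$ for $g$ in (ii), the Oinarov regularity lets one telescope the inner integrals so that the contributions of far-away blocks decay geometrically; together with the monotonicity of $x\mapsto\int_0^x\nu$ this shows that both inequalities are equivalent, up to constants depending only on $\mathfrak p,\mathfrak q$, to a single discrete inequality of the form
\begin{equation*}
\Big(\sum_k U_k\Big(\sum_{j\ge k}t_j\Big)^{\mathfrak q}\Big)^{1/\mathfrak q}\le K\Big(\sum_k N_k\,t_k^{\mathfrak p}\Big)^{1/\mathfrak p},
\end{equation*}
with the same weight sequences $\{U_k\}$, $\{N_k\}$ manufactured from $w$ and $\int_0^\cdot\nu$; here $\mathfrak p\le 1$ is used to pass between $\big(\sum_{j\ge k}\,\cdot\,\big)^{\mathfrak p}$ and $\sum_{j\ge k}(\,\cdot\,)^{\mathfrak p}$ on the grid, and the exponent $\mathfrak q/\mathfrak p$ and the power $K_2^{\mathfrak p}$ appearing in (ii) are precisely what is needed to make the two discretizations match. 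Lemma~\ref{theorem:discretehardy}(i) then provides a two-sided bound for the sharp $K$ in terms of $\{U_k\},\{N_k\}$; since (i) and (ii) yield the same discrete data, their sharp constants are comparable, and an anti-discretization step (passing from the grid $\{2^k\}$ back to all scales) restores the equivalence of the original continuous inequalities with $K_1\approx_{\mathfrak p,\mathfrak q}K_2$.

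The main obstacle is the range $0<\mathfrak q<\mathfrak p\le1$, where the quantities in (i) and (ii) are only quasi-norms: the Minkowski-type arguments that make such reductions routine for $\mathfrak p\ge1$ fail, and one cannot simply write the non-increasing $f$ as a continuous superposition $\int_0^\infty\mathbbm{1}_{(0,t)}\,d\mu(t)$ of indicators and add up the resulting estimates. All the difficulty is therefore concentrated in the discretization: one must choose the blocking so that the errors incurred in replacing $\big(\sum a_j\big)^{\mathfrak p}$ by $\sum a_j^{\mathfrak p}$, and the analogous replacements for the $(y-x)_+^{\mathfrak p}$ kernel, are absorbed by constants depending only on $\mathfrak p$ and $\mathfrak q$ — and this is exactly the point at which the Oinarov regularity of the kernel and the discrete Hardy inequality of Lemma~\ref{theorem:discretehardy} do the work.
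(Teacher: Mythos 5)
The paper does not prove this statement; it is imported wholesale as \cite[Theorem~3.12]{stepanov}, so there is no in-paper argument to compare against. Judging your sketch on its own merits, the reduction of (i) to the form with $f(x)=\sigma((x,\infty))$ is fine, and the Oinarov regularity $(z-x)_+^{\mathfrak p}\approx_{\mathfrak p}(z-y)_+^{\mathfrak p}+(y-x)_+^{\mathfrak p}$ is correct. The gap is in the sentence where you say ``$\mathfrak p\le 1$ is used to pass between $\bigl(\sum_{j\ge k}\,\cdot\,\bigr)^{\mathfrak p}$ and $\sum_{j\ge k}(\,\cdot\,)^{\mathfrak p}$ on the grid.'' For $0<\mathfrak p<1$ subadditivity gives $\bigl(\sum_j c_j\bigr)^{\mathfrak p}\le\sum_j c_j^{\mathfrak p}$, but the reverse inequality is false uniformly: with $N$ equal terms $c_j=1$ the two sides are $N^{\mathfrak p}$ and $N$, so the ratio $N^{1-\mathfrak p}$ is unbounded. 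Nothing in your construction forces the block sums $a_k=\sigma([2^k,2^{k+1}))$ or $t_k$ to decay: the measure $\sigma$ and the function $g$ are arbitrary, and the Oinarov property is a statement about the kernel alone — it cannot make the \emph{data} decay geometrically along a fixed dyadic grid. Consequently the claimed collapse of (i) and (ii) to ``a single discrete inequality'' with comparable weight sequences is not established.

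This step is not a technicality; it is where the entire content of the reduction theorem sits. Indeed, the natural pointwise candidates from the two sides fail to match. Writing $f^{\mathfrak p}(x)=\int_x^\infty g$ (so that $\int_0^\infty f^{\mathfrak p}\nu=\int_0^\infty g\bigl(\int_0^{\cdot}\nu\bigr)$ exactly by Fubini), the inner quantity in (i) becomes $\int_x^\infty\bigl(\int_t^\infty g\bigr)^{1/\mathfrak p}dt$ while the one in (ii) is $\bigl(\int_x^\infty(y-x)^{\mathfrak p}g(y)\,dy\bigr)^{1/\mathfrak p}$. These are \emph{not} pointwise comparable for $\mathfrak p<1$: take $x=0$ and $G(t)=\int_t^\infty g$ behaving like $t^{-\alpha}\mathbbm{1}_{(0,1)}(t)$ with $\alpha\uparrow\mathfrak p$; both quantities blow up, but their ratio goes to infinity. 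So the equivalence of the best constants $K_1$ and $K_2$ cannot be seen at the level of inner integrals or individual dyadic blocks — it emerges only after integrating against $w$ and exploiting the global structure of the Hardy inequality, which is typically done with a covering sequence adapted to $\nu$ or to $w$ rather than the fixed grid $\{2^k\}$. As written, your argument would need the term-by-term comparison that fails, and that is precisely the part the proof of \cite[Theorem~3.12]{stepanov} has to overcome.
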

The last result of this subsection is a reverse Hardy inequality for monotone functions.
\begin{theorem}[\protect{\cite[Theorem 4.3 and Remark 4.5]{Krepela2022}, see also \cite[Theorem 1.8]{Gogatishvili2006}}]
\label{theorem:krepela}
Let $0<\mathfrak{q}<1$.
Assume that $t \mapsto 1/\nu (t)$ and $t \mapsto \nu (t) /t$ are non-increasing. Set $W(x)=\int_0 ^x w$ and 
\begin{eqnarray*}\label{eq:defxi}
\xi(t)&=&\left(\int_0^\infty  W^\frac{\mathfrak{q}}{1-\mathfrak{q}}(s)w(s)s^{-\frac{\mathfrak{q}}{1-\mathfrak{q}}}\min\{t^\frac{\mathfrak{q}}{1-\mathfrak{q}}, s^\frac{\mathfrak{q}}{1-\mathfrak{q}}\} ds \right)^{1-\mathfrak{q}}\\ &\approx_{\mathfrak{q}}& t^\mathfrak{q} \left(\int_t ^\infty s^{-\frac{1}{1-\mathfrak{q}}} W^{\frac{1}{1-\mathfrak{q}}}(s)ds \right)^{1-\mathfrak{q}}.
  \end{eqnarray*}
  Then the inequality
  \begin{equation}
  \label{eq:krepela}
  \left(\int_0 ^\infty f^{\mathfrak{q}} w \right)^{\frac{1}{\mathfrak{q}}}
      \leq K_1 \sup_{x>0} \frac{ \nu (x) }{x} \int_0 ^x f
  \end{equation}
  holds  for any non-increasing function $f$ if and only if $\xi(t)<\infty$ for any $t>0$ and 
 \begin{equation}
 \label{eq:krep}
K_2:=\left(\int_0^\infty \nu ^{-\mathfrak{q}}(t)\xi^{-\frac{\mathfrak{q}}{1-\mathfrak{q}}}(t)W^\frac{\mathfrak{q}}{1-\mathfrak{q}}(t)w(t) dt\right)^\frac1{\mathfrak{q}}< \infty.
        \end{equation}
        Moreover, $K_1 \approx _\mathfrak{q} K_2$.\end{theorem}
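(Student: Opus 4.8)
I would start by writing $F(x)=\int_0^x f$; since $f$ is non-increasing, $F$ is concave, non-decreasing and vanishes at $0$, and $f=F'$. The two monotonicity hypotheses on $\nu$ are equivalent to saying that $\phi(t):=t/\nu(t)$ is quasi-concave, i.e.\ non-decreasing with $t\mapsto \phi(t)/t=1/\nu(t)$ non-increasing; in particular $\phi$ is comparable to its least concave majorant. Since the right-hand side of \eqref{eq:krepela} equals $\sup_{x>0}F(x)/\phi(x)$, the task is to show that the least constant $K_1$ in \eqref{eq:krepela}, viewed as an inequality over all non-increasing $f\ge0$, satisfies $K_1\approx_{\mathfrak q}K_2$.

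The plan is to linearize the left-hand side by duality. Because $0<\mathfrak q<1$, the reverse H\"older inequality gives, for every $g\ge0$,
\[
\Big(\int_0^\infty g^{\mathfrak q}w\Big)^{1/\mathfrak q}=\inf\Big\{\int_0^\infty g\,h\,w\ :\ h\ge0,\ \int_0^\infty h^{-\frac{\mathfrak q}{1-\mathfrak q}}w\le1\Big\},
\]
the extremal $h$ being a constant multiple of $g^{\mathfrak q-1}$. Thus $\big(\int f^{\mathfrak q}w\big)^{1/\mathfrak q}$ is an infimum of linear functionals of $f$. The admissible $h$ form a convex set (the map $h\mapsto h^{-\mathfrak q/(1-\mathfrak q)}$ is convex), as do the admissible $f$; after truncating the problem to $[\varepsilon,R]$ — which is where the compactness that a minimax theorem (Sion's, say) requires comes from — and letting $\varepsilon\to0$, $R\to\infty$ by monotone convergence, one may interchange $\sup_f$ and $\inf_h$, obtaining
\[
K_1\ \approx_{\mathfrak q}\ \inf_{h}\ \sup_{f\downarrow}\ \frac{\int_0^\infty f\,h\,w}{\displaystyle\sup_{x>0}\frac{\nu(x)}{x}\int_0^x f},
\]
the infimum being over the same $h$ as above.

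Next I would evaluate the inner supremum and then the outer infimum. For fixed $h$, the inner problem is a linear extremal problem over non-increasing $f$ with the Ces\`aro-type constraint $\int_0^x f\le\phi(x)$; by a least-concave-majorant argument (the dual linear program is minimized by the derivative of the least concave majorant of $x\mapsto\int_0^x hw$), and using the quasi-concavity of $\phi$, the inner supremum is comparable to an explicit functional of $\int_0^\cdot hw$. Minimizing this functional over the admissible $h$ — writing $H(a)=\int_0^a hw$, so that the constraint reads $\int_0^\infty(H')^{-\mathfrak q/(1-\mathfrak q)}w^{1/(1-\mathfrak q)}\le1$, and balancing (Lagrange multipliers) to pin down the optimal $H$ in terms of $W(t)=\int_0^t w$ — and then invoking the elementary integration-by-parts identity from the statement that equates the two expressions for $\xi$, one arrives at $K_1\approx_{\mathfrak q}K_2$. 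The necessity of $\xi(t)<\infty$ for every $t>0$ I would verify separately: if $\xi(t_0)=\infty$, the representation $f(x)=\mu((x,\infty))$ of non-increasing functions lets one build an $f$ with $\sup_x\frac{\nu(x)}{x}\int_0^x f<\infty$ but $\int_0^\infty f^{\mathfrak q}w=\infty$.

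The hard part will be the passage through the minimax: justifying the interchange on the non-compact, possibly infinite-measure half-line with constants depending only on $\mathfrak q$ (hence the truncation), evaluating the inner linear problem uniformly, and solving the resulting one-variable optimization — this is where the genuine subtlety of the quasi-Banach range $\mathfrak q<1$ appears, with no single test function sufficing in general. A route that avoids minimax altogether is full discretization: represent every concave $F=\int_0^\cdot f$ as $\int_{(0,\infty]}\min(\cdot,t)\,d\mu(t)$, restrict $\mu$ to a geometric grid so that the supremum constraint becomes a countable family of linear constraints, and apply the discrete Hardy inequality of Lemma \ref{theorem:discretehardy} in its $\mathfrak p\le1$ form together with the continuous Hardy estimates of Lemmas \ref{theorem:cont hardy2}--\ref{theorem:cont hardy}; then the anti-discretization bookkeeping is the technical heart.
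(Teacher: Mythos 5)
The paper does not prove this theorem: it is quoted from \cite{Krepela2022} (Theorem~4.3 and Remark~4.5) and \cite{Gogatishvili2006}, so there is no ``paper's own proof'' to compare against. Assessed on its own terms, what you have written is a plausible program, not a proof, and the two steps you yourself single out as ``the hard part'' are exactly where it is incomplete.

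The reverse-H\"older linearization is correct for $0<\mathfrak q<1$, the admissible set of $h$ is convex, the payoff $\int fhw$ is bilinear, and a truncation to $[\varepsilon,R]$ does supply the compactness Sion's theorem needs, so the minimax setup is sound in outline. The first genuine gap is the inner linear program after the interchange: writing $f(x)=\mu((x,\infty))$, $F=\int\min(\cdot,t)\,d\mu(t)$, and $\int fhw=\int H\,d\mu$ with $H=\int_0^{\cdot}hw$, the problem $\sup\{\int H\,d\mu\,:\,\int\min(\cdot,t)\,d\mu(t)\le\phi\}$ is not solved by a single Dirac (the constraint couples all scales $x$), and ``the inner supremum is comparable to an explicit functional of $\int_0^{\cdot}hw$'' via the least concave majorant of $H$ is a claim, not an argument; making it uniform in $h$ requires a Sawyer-type duality or a level-function construction that you do not carry out. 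The second gap is the outer minimization: the feasible set $\{h:\int h^{-\mathfrak q/(1-\mathfrak q)}w\le1\}$ is infinite-dimensional, the functional coming out of the inner LP is not smooth, and ``balancing (Lagrange multipliers)'' does not by itself produce the specific quantity $K_2$ of \eqref{eq:krep}; the integration-by-parts identity equating the two forms of $\xi$ is easy to verify but is only a small piece of the identification. Your necessity argument ($\xi(t_0)=\infty\Rightarrow$ no finite $K_1$) is also only stated, and one should note that the natural first test $f=\mathbbm{1}_{[0,t_0]}$ does \emph{not} work, so this too needs a real construction.

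These are precisely the obstructions that the cited references overcome by discretization and anti-discretization (geometric covering sequences, reduction to the discrete Hardy inequality in its $\mathfrak p\le1$ form as in Lemma~\ref{theorem:discretehardy}, then anti-discretization), which you mention only as an alternative. Since you acknowledge the ``technical heart'' of \emph{both} routes without completing either, the proposal does not yet constitute a proof. If you want to finish along your minimax line, the missing ingredient is a rigorous evaluation of $\sup_{f\downarrow,\,F\le\phi}\int fhw$ as a concrete functional of $H$ (the level-function/least-concave-majorant machinery), followed by an honest extremization over $h$; if you prefer the discretization route, it is essentially the argument already in \cite{Krepela2022}, \cite{Gogatishvili2006}, and reproducing it would simply be re-deriving the cited result.
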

\section{Necessary conditions for  weighted Fourier inequalities}
In this section we study the inequality 
\begin{equation}
\label{ineq:pittnomono}   \big\|u\widehat{f}\,\big\|_{q}\leq C \norm{ f v}_p
\end{equation}
with general weights $u$ and $v$.
This section has three parts: in the first one, we derive a simple necessary condition for
\eqref{ineq:pittnomono} in the case $p>2$; in the second one, we prove that condition \eqref{ineq:55}, shown to be sufficient for \eqref{ineq:pittnomono} when $\max(q,p')\geq 2$ by Benedetto and Heinig, is also necessary whenever $q<p$; finally, in the third one, we obtain an additional necessary condition for the case $q<2<p$. (Recall that it was shown in \cite{sin} that \eqref{ineq:55} is not sufficient for $\eqref{ineq:pittgen}$ in the case $q<2<p$.)

Before proceeding to the new results, 
we recall 
the following well-known necessary condition for  \eqref{ineq:pittnomono}.

\begin{proposition}[\protect{\cite{decarli,h}}]
\label{lemma:case1}
Let $0< q \leq \infty$ and $1 \leq p \leq \infty$. Assume that inequality \eqref{ineq:pittnomono} holds for $f\in L^1$. Then
\begin{equation}
\label{cond:intervals}
    \sup_{%A,B \text{ balls, }
     |A||B|= 1} \left(\int_A u^{q} \right)^{\frac{1}{{q}}} \left( \int_B v^{-p'} \right)^{\frac{1}{p'}} \lesssim_{p,q} C,
\end{equation}  where
the supremum is taken over all cubes $A$ and $B$ in $\R^d$ and $|A|$ is the sidelength of $A$. In particular, both $u^q$ and $v^{-p'}$ are locally integrable.

\end{proposition}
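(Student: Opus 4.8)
The plan is to test \eqref{ineq:pittnomono} on modulated indicators of cubes, exploiting that the Fourier transform of such a function is essentially constant on the dual cube. First I would fix cubes $A,B\subset\R^d$ with centers $\xi_0,x_0$ and with small side-length product, $|A|\,|B|\le c_d$ for a dimensional constant $c_d$ to be fixed, and (assuming for now $\int_B v^{-p'}<\infty$) take the test function
$$
f(x)=e^{2\pi i\langle x,\xi_0\rangle}\,v^{-p'}(x)\,\mathbbm{1}_B(x)\in L^1 .
$$
Its right-hand side is immediate, $\norm{fv}_p=\big(\int_B v^{-p'}\big)^{1/p}$ because $|fv|^p=v^{-p'}\mathbbm{1}_B$ (read as $1$ when $p=\infty$). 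For the left-hand side, after extracting the unimodular factor $e^{-2\pi i\langle x_0,\xi-\xi_0\rangle}$ one has $|\widehat f(\xi)|=\big|\int_B e^{-2\pi i\langle x-x_0,\,\xi-\xi_0\rangle}v^{-p'}(x)\,dx\big|$; since $|\langle x-x_0,\xi-\xi_0\rangle|\le\frac d4|A|\,|B|\le\frac d4 c_d$ for $x\in B$, $\xi\in A$, choosing $c_d$ so small that $|e^{-2\pi i\theta}-1|\le\frac12$ whenever $|\theta|\le\frac d4 c_d$ yields the key lower bound $|\widehat f(\xi)|\ge\frac12\int_B v^{-p'}$ for all $\xi\in A$.

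Plugging this into \eqref{ineq:pittnomono} then gives, for every pair of cubes with $|A|\,|B|\le c_d$,
$$
\frac12\Big(\int_B v^{-p'}\Big)\Big(\int_A u^q\Big)^{1/q}\le\norm{u\widehat f}_q\le C\norm{fv}_p=C\Big(\int_B v^{-p'}\Big)^{1/p},
$$
and dividing out $\big(\int_B v^{-p'}\big)^{1/p}$ leaves $\big(\int_A u^q\big)^{1/q}\big(\int_B v^{-p'}\big)^{1/p'}\le 2C$; no triangle inequality is used, so the argument runs for all $0<q\le\infty$, $1\le p\le\infty$, with the usual conventions in the endpoint cases. To upgrade this to the stated normalization $|A|\,|B|=1$, I would partition $A$ into $N=\lceil c_d^{-1}\rceil^{\,d}$ congruent subcubes $A_i$ (each with $|A_i|\,|B|\le c_d$), pick the one with the largest $\int_{A_i}u^q$ — so $\big(\int_{A_i}u^q\big)^{1/q}\ge N^{-1/q}\big(\int_A u^q\big)^{1/q}$ — and apply the previous bound to $(A_i,B)$ to obtain \eqref{cond:intervals} with an extra constant $2N^{1/q}$.

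Finally, for the local integrability and the infinite-integral case, I would redo the argument with the truncations $g_n=\min(v^{-p'},n)\mathbbm{1}_B$ (bounded, compactly supported, hence in $L^1$) replacing $v^{-p'}\mathbbm{1}_B$; a short computation gives $\norm{e^{2\pi i\langle\cdot,\xi_0\rangle}g_n\,v}_p^p\le\int_B g_n$ and $|\widehat{(e^{2\pi i\langle\cdot,\xi_0\rangle}g_n)}(\xi)|\ge\frac12\int_B g_n$ on $A$, so that $\big(\int_B g_n\big)^{1/p'}\big(\int_A u^q\big)^{1/q}\le 2C$ for all $n$, and letting $n\to\infty$ by monotone convergence shows $\big(\int_A u^q\big)^{1/q}\big(\int_B v^{-p'}\big)^{1/p'}\le 2C$ holds in $[0,\infty]$; this forces $v^{-p'}$ to be locally integrable (unless $u\equiv0$, in which case there is nothing to prove) by choosing, via Lebesgue density, a cube $A$ of the appropriate side-length with $\int_A u^q>0$, and symmetrically for $u^q$ (or by duality, Lemma \ref{lemma:duality}). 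I expect the main obstacle to be purely the control of the oscillatory phase — keeping $|\widehat f|$ bounded below on all of the dual cube — which is exactly why the modulation is centered at $\xi_0$, the unimodular factor is pulled out, and the problem is first reduced to the small-product regime; the remaining steps are routine bookkeeping with exponents.
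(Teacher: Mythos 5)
Your proof is correct and follows essentially the same approach as the paper: test \eqref{ineq:pittnomono} on a modulated weighted indicator of a cube and use that the Fourier transform is bounded below on the dual cube by $\int_B v^{-p'}$. The paper obtains \eqref{cond:intervals} as the special case $\lambda_n=\delta_{n,0}$ of the proof of Lemma~\ref{lemma:prelinec} (a single ball plus translation); your write-up just spells out the cube bookkeeping, the reduction to $|A|\,|B|=1$, and the truncation argument giving local integrability more explicitly.
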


\label{section:necessary}
\subsection{The case \texorpdfstring{$p>2$}{p>2}}
The next result improves Proposition 
\ref{lemma:case1}
for
$p>2$.
%some preliminary necessary conditions for \eqref{ineq:pittgen} to hold.
\begin{lemma}
\label{lemma:prelinec}
    Let $2< p\leq \infty$ and $0<q\leq \infty$. Assume that inequality \eqref{ineq:pittnomono} holds for $f\in L^1$. Then
\begin{equation}\label{new-n}
 \sup_{\substack{s>0\\|A|=1/s}} \left( \int_{A} u^q  \right)^{\frac{1}{q}}   \left(\sum_{n \in \mathbb{Z}^d} \left(\int_{sn+ s[-\frac{1}{2}, \frac{1}{2}]^d}v^{-p'}\right) ^{\frac{p^{\sharp}}{p'}} \right) ^{\frac{1}{p^{\sharp}}} \lesssim_{p,q} C,
 \end{equation}
 where the supremum is taken over all cubes $A$ in $\R^d$.

\end{lemma}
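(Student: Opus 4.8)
The plan is to test the hypothesized inequality \eqref{ineq:pittnomono} against a random superposition of localized bumps and then optimize the coefficients. Fix $s>0$ and a cube $A$ with $|A|=1/s$, write $Q_n=sn+s[-\tfrac12,\tfrac12]^d$ for $n\in\mathbb{Z}^d$, and set $V_n=\int_{Q_n}v^{-p'}$; by Proposition \ref{lemma:case1} we may assume each $V_n<\infty$, for otherwise $v^{-p'}$ fails to be locally integrable and \eqref{ineq:pittnomono} cannot hold. For a finitely supported nonnegative sequence $(a_n)$, a sign pattern $\varepsilon=(\varepsilon_n)\in\{\pm1\}^{\mathbb{Z}^d}$, and a point $\xi_0\in A$ to be chosen, I would consider
\[
f_\varepsilon(x)=e^{2\pi i\langle x,\xi_0\rangle}\sum_n\varepsilon_n a_n\, v^{-p'}(x)\,\mathbbm{1}_{Q_n}(x).
\]
Since the $Q_n$ are essentially disjoint and $(v^{-p'})^p v^{p}=v^{-p'}$, a direct computation gives $\norm{f_\varepsilon v}_p=\bigl(\sum_n a_n^p V_n\bigr)^{1/p}$ (with the usual convention when $p=\infty$), and in particular $f_\varepsilon\in L^1\cap L^p(v)$. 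On the Fourier side $\widehat{f_\varepsilon}(\xi)=\sum_n\varepsilon_n a_n\,\widehat{h_n}(\xi-\xi_0)$ with $h_n:=v^{-p'}\mathbbm{1}_{Q_n}$, and substituting $x=sn+y$ shows $\widehat{h_n}(\eta)=e^{-2\pi i\langle sn,\eta\rangle}\int_{[-s/2,s/2]^d}v^{-p'}(sn+y)e^{-2\pi i\langle y,\eta\rangle}\,dy$.

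The key localization step is to pass to a sub-cube $A'\subset A$ of sidelength $\tfrac1{Ns}$, with $N=N(d)$ chosen large enough that $|2\pi\langle y,\eta\rangle|\le\tfrac\pi3$ whenever $y\in[-s/2,s/2]^d$ and $\eta\in A'-\xi_0$. Then the integral displayed above has $\Re(\cdot)\ge\tfrac12 V_n$, so $|\widehat{h_n}(\xi-\xi_0)|\ge\tfrac12 V_n$ for every $\xi\in A'$ and every $n$. Partitioning $A$ into the $N^d$ such sub-cubes and choosing $A'$ to be one maximizing $\int_{\,\cdot\,}u^q$, we may moreover assume $\int_{A'}u^q\ge N^{-d}\int_A u^q$; take $\xi_0$ to be its center. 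Now Khinchin's inequality (Lemma \ref{theorem:khintchine}), applied for each fixed $\xi$ to the complex numbers $a_n\widehat{h_n}(\xi-\xi_0)$ and then integrated against $u^q\mathbbm{1}_{A'}$, gives
\[
\mathbb{E}_\varepsilon\int_{A'}u^q\,|\widehat{f_\varepsilon}|^q\;\approx_q\;\int_{A'}u^q(\xi)\Bigl(\sum_n a_n^2|\widehat{h_n}(\xi-\xi_0)|^2\Bigr)^{q/2}d\xi\;\gtrsim_{q,d}\;\Bigl(\sum_n a_n^2 V_n^2\Bigr)^{q/2}\int_A u^q,
\]
so for a suitable sign pattern $\varepsilon$ one obtains $\norm{u\widehat{f_\varepsilon}}_q\gtrsim_{q,d}\bigl(\int_A u^q\bigr)^{1/q}\bigl(\sum_n a_n^2 V_n^2\bigr)^{1/2}$; the case $q=\infty$ is analogous, replacing $\int_A u^q$ by $\sup_A u$ and Khinchin by its $L^1$ instance, and using that the lower bound on $\Re\,\widehat{h_n}$ is uniform over $A'$.

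Feeding these two estimates into \eqref{ineq:pittnomono} yields, for every finitely supported $(a_n)\ge0$,
\[
\Bigl(\int_A u^q\Bigr)^{1/q}\Bigl(\sum_n a_n^2 V_n^2\Bigr)^{1/2}\;\lesssim_{p,q}\;C\Bigl(\sum_n a_n^p V_n\Bigr)^{1/p}.
\]
It then remains to optimize the ratio over $(a_n)$: substituting $t_n=a_n V_n^{1/p}$ turns it into $\sup_t\norm{(t_n V_n^{1/p'})}_{\ell^2}/\norm{t}_{\ell^p}$, which is the operator norm of the diagonal multiplier $\ell^p\to\ell^2$ with symbol $(V_n^{1/p'})$; since $\tfrac1p+\tfrac1{p^\sharp}=\tfrac12$ for $p\ge2$, Hölder's inequality (with equality attainable) identifies this norm as $\bigl(\sum_n V_n^{p^\sharp/p'}\bigr)^{1/p^\sharp}$. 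Letting the support of $(a_n)$ grow and using monotone convergence, I get $\bigl(\int_A u^q\bigr)^{1/q}\bigl(\sum_n V_n^{p^\sharp/p'}\bigr)^{1/p^\sharp}\lesssim_{p,q}C$, and taking the supremum over $s$ and over cubes $A$ with $|A|=1/s$ gives \eqref{new-n}. I expect the main obstacle to be precisely the localization step: the pieces $\widehat{h_n}$ stop oscillating only on a sub-cube of $A$ whose sidelength is comparable to $1/s$ merely up to a dimensional constant, which is why one must first transfer the mass of $u^q$ onto such a sub-cube and why the conclusion is only asserted up to $\lesssim_{p,q}$ (with an implicit dimensional factor). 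The remaining ingredients — Khinchin's inequality to decouple the signs and Hölder's inequality to manufacture the $\ell^{p^\sharp}$ norm — are routine.
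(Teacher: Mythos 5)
Your construction is essentially identical to the paper's: both build a randomized superposition of localized bumps supported on translates near scale $s$, lower-bound the Fourier transform of each bump by $V_n$ on a sub-cube of $A$ near the Fourier origin, decouple with Khinchin's inequality, and then optimize the coefficients to manufacture the $\ell^{p^\sharp/p'}$-norm (you via a diagonal-multiplier norm computation, the paper via the explicit choice $\lambda_n = V_n^{1/(p-2)}$; the ball-versus-cube distinction is immaterial). The only spot I would press on is your treatment of $q=\infty$: a uniform lower bound $\Re\,\widehat{h_n}\geq\tfrac12 V_n$ on $A'$ yields, for the all-plus-sign choice, the $\ell^1$-quantity $\sum_n a_n V_n$ rather than $(\sum_n a_n^2V_n^2)^{1/2}$, while the $L^1$-Khinchin trick applied at a single $\xi$ gives a sign pattern that need not realize $\operatorname*{ess\,sup}_{A'}u$, so "analogous" hides a real issue (it can be patched, e.g.\ by averaging $u|\widehat{f_\varepsilon}|$ over the set where $u$ is within $\delta$ of its essential supremum before invoking Khinchin). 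The paper avoids this by disposing of $q=\infty$ separately via the elementary sharp constant $C=\norm{u}_\infty\norm{v^{-1}}_{p'}$ (Remark~\ref{theorem:degheinig}), from which \eqref{new-n} follows immediately because $p^\sharp>p'$ for $p>2$, so $(\sum V_n^{p^\sharp/p'})^{1/p^\sharp}\le(\sum V_n)^{1/p'}=\norm{v^{-1}}_{p'}$.
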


\begin{proof}
Note that if $q=\infty$ it is even possible to determine  the exact value of the constant $C$ in inequality \eqref{ineq:pittnomono}, see
 %is trivial and it is discussed in 
 Proposition \ref{theorem:degheinig}. 
 Assume now $q<\infty$.
   We exploit the behaviour of the Fourier transform with respect to translations (cf.    \cite[Theorem 3.1]{Alg}).

    Fix $s>0$. Let $M\in \mathbb{N}$  and define \begin{equation*}f(x)=v^{-p'}(x)  \sum_{\substack{n\in \mathbb{Z}^d\\|n|\leq M}} \varepsilon _n \lambda_n \mathbbm{1}_{[0,s]}(|x-2ns|)=:\sum_{\substack{n\in \mathbb{Z}^d\\|n|\leq M}}  \varepsilon _n \lambda_n f_n(x) \in L^1(\mathbb{R}^d), \end{equation*} where $\lambda_n \in \mathbb{R}$ and $\varepsilon_n=\pm 1$. Clearly,
\begin{equation*}\norm{fv}_{p} =\Bigg(
\sum_{{\substack{n\in \mathbb{Z}^d\\|n|\leq M}} }
|\lambda_n|^p  \int_ {|y-2ns|<s} v^{-p'}(y) dy\Bigg)^{\frac{1}{p}}=:\Bigg(\sum_{{\substack{n\in \mathbb{Z}^d\\|n|\leq M}} } |\lambda_n|^p  V_n\Bigg)^{\frac{1}{p}}\end{equation*}
with the usual modification for $p=\infty$
and\begin{equation*}\widehat{f}(\xi) = \sum_{{\substack{n\in \mathbb{Z}^d\\|n|\leq M}} } \varepsilon _n \lambda_n \widehat{f} _n(\xi).\end{equation*}
Hence, using the Khinchin inequality \eqref{khi}, we have that

\begin{eqnarray*}
\mathbb{E}\left [ \norm{\widehat{f}u}_{q}^q\right] &\approx_q& \int_{\mathbb{R}^d} u^q(\xi) \Bigg(\sum_{ {\substack{n\in \mathbb{Z}^d\\|n|\leq M}} } |\lambda _n |^2 |\widehat{f}_n(\xi)|^2\Bigg)^{\frac{q}{2}} d\xi \\&\geq& \int_{|\xi|< \frac{1}{2 \pi s}} u^q(\xi) \Bigg(\sum_{ \substack{n\in \mathbb{Z}^d\\|n|\leq M}} |\lambda _n |^2 |\widehat{f}_n(\xi)|^2\Bigg)^{\frac{q}{2}} d\xi .
\end{eqnarray*}
We note that if $|\xi|< \frac{1}{2 \pi s}$,

\begin{eqnarray*}
|\widehat{f}_n(\xi)| &=&
 \left |\int_{|y-2ns|<s} v^{-p'}(y) e^{2 \pi i \langle \xi, y \rangle} dy\right|\\&=& \left |\int_{|y|<s} v^{-p'}(y+2ns) e^{2 \pi i \langle \xi, y \rangle} dy\right| \gtrsim \int_{|y-2ns|<s} v^{-p'}=V_n.
\end{eqnarray*}
Thus, since
inequality \eqref{ineq:pittnomono} holds, by letting $M\to \infty$ we obtain
\begin{equation*}
\left( \int_{|\xi|\leq \frac{1}{2 \pi s}} u^q \right)^{\frac{1}{q}} \left(\sum_{ n\in \mathbb{Z}^d} |\lambda _n |^2 V_n^2 \right) ^{\frac{1}{2}} \lesssim_{p,q} C \left(\sum_{ n\in \mathbb{Z}^d} |\lambda _n |^p V_n \right) ^{\frac{1}{p}}.
\end{equation*}
Choosing $\lambda _n = V_n^{1/(p-2)}$,
 %from H\"{o}lder's inequality
   we deduce that
\begin{equation*}
\left( \int_{|\xi|\leq \frac{1}{2 \pi s}} u^q \right)^{\frac{1}{q}}   \left(\sum_{ n\in \mathbb{Z}^d}  V_n ^{\frac{p^\sharp}{p'}} \right) ^{\frac{1}{p^{\sharp}}} \lesssim_{p,q} C.
\end{equation*}
It remains to note that by translating $\widehat{f}$ we can replace $\int_{|\xi|\leq \frac{1}{2 \pi s}} u^q $ by $\int_{|\xi-\xi_0|\leq \frac{1}{2 \pi s}} u^q$ for any $\xi_0 \in \R^d$ and that any cube can be covered by a finite number of balls. 
\end{proof}

 For the sake of completeness, we note that \eqref{cond:intervals} can be obtained following the proof of Lemma \ref{lemma:prelinec} with
 $\lambda_n=1$ for $n=(0, \dots, 0)$ and $\lambda_n=0$ otherwise.

\subsection{The case \texorpdfstring{$q<p$}{q<p}}

We are going to sharpen 
the argument which yields condition \eqref{cond:intervals} by considering a randomized superposition 
of the functions we used in the proof of
Lemma \ref{lemma:prelinec}
for different cube sizes.

%When $q<p$, the argument which proves condition \eqref{cond:intervals} can be improved by considering a randomized superposition of 
%the functions which yield an  \eqref{cond:intervals} for different cube sizes. 
%This is similar to a standard proof of the Hardy inequality (see for instance Theorem 2 in \cite{prokhorov}).

\begin{theorem}\label{lemma:necesnew}

Let $1\leq p \leq \infty$, $0<q<\infty$, and $q<p$. Assume that inequality \eqref{ineq:pittnomono} holds for $f\in L^1$.
%with constant $C$.
 Then, setting

\begin{equation*}\mathbf{V}(t) =\int_{\mathbb{S}^{d-1}} v^{-p'}(tx) dx\qquad\mbox{
and}\qquad
\mathbf{U}(t) =\int_{\mathbb{S}^{d-1}} u^q(tx) dx,\end{equation*} we have

 \begin{equation}\label{5}\left(\int_{0}^\infty \mathbf{U}(t) t^{d-1}  \left(\int_0 ^t \mathbf{U}(s) s^{d-1} ds\right)^{\frac{r}{p}} \left(\int_0 ^{\frac{1}{2 \pi t}} \mathbf{V}(s) s^{d-1} ds\right)^{\frac{r}{p'}} dt\right)^{\frac{1}{r}}\lesssim_{p,q} C.
 \end{equation} In particular, if $u$ and $v^{-1}$ are radial, non-increasing,
  \begin{equation}
  \label{ineq:55}
      % K\approx
      \left(
       \int_0^\infty u^{*,q}(s)
       \left(\int_{0}^{s} u^{*,q}\right)^{\frac{r}{p}}\left(\int_{0}^{1/s} v_*^{-p'}\right)^{\frac{r}{p'}}  ds \right)^{\frac{1}{r}}\lesssim_{p,q} C.
    \end{equation}
 \begin{comment}
or,
equivalently,
 \begin{equation}\label{6}
 \left(\int_{0}^\infty V(t) t^{d-1}  \left(\int_0 ^t V(s) s^{d-1} ds\right)^{\frac{r}{q'}} \left(\int_0 ^{\frac{1}{2 \pi t}} \mathbf{U}(s) s^{d-1} ds\right)^{\frac{r}{q}} dt\right)^{\frac{1}{r}}\lesssim C.
  \end{equation}
%In particular, for radially non-increasing $u,v^{-1}$ and $q<p$, $\max(q,p')\geq 2$, equation \eqref{nec1} is sufficient and necessary.
\end{comment}
\end{theorem}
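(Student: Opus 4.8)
The plan is to sharpen the randomized construction behind Proposition~\ref{lemma:case1} and Lemma~\ref{lemma:prelinec} by superposing, over a geometric sequence of scales, functions concentrated on balls. Fix the dyadic frequency scales $t_k=2^k$ and the reciprocal radii $\rho_k=(2\pi t_k)^{-1}$, $k\in\mathbb{Z}$, and set
$$B_k=\{|x|\le\rho_k\},\qquad A_j=B_j\setminus B_{j+1},\qquad \Xi_j=\{t_j\le|\xi|<t_{j+1}\},$$
so that $B_0\supset B_1\supset\cdots$. Write $V_j=\int_{A_j}v^{-p'}$, $U_j=\int_{\Xi_j}u^q$ (finite by Proposition~\ref{lemma:case1}, since $u^q,v^{-p'}\in L^1_{\mathrm{loc}}$), and $W_j=\int_{B_j}v^{-p'}=\sum_{i\ge j}V_i$; these dyadic quantities satisfy $U_j\approx\int_{t_j}^{t_{j+1}}\mathbf{U}(t)t^{d-1}dt$ and $W_j\approx\int_0^{\rho_j}\mathbf{V}(s)s^{d-1}ds$ up to a dimensional constant. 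For $1<p<\infty$, a finitely supported sequence $\lambda\ge0$ and signs $\varepsilon_k=\pm1$, I would test \eqref{ineq:pittnomono} against
$$f_\varepsilon=v^{-p'}\sum_k\varepsilon_k\lambda_k\mathbbm{1}_{B_k}\in L^1,$$
which equals $v^{-p'}\sum_{k\le j}\varepsilon_k\lambda_k$ on $A_j$, so that, using $(v^{-p'})^pv^p=v^{-p'}$,
$$\|f_\varepsilon v\|_p^p=\sum_j\Big|\sum_{k\le j}\varepsilon_k\lambda_k\Big|^pV_j.$$

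First I would pass to a discrete inequality for non-negative sequences. Applying $\|u\widehat{f_\varepsilon}\|_q\le C\|f_\varepsilon v\|_p$ for each $\varepsilon$ and averaging over the signs: on the right, Jensen's inequality (valid since $q<p$) and Khinchin's inequality \eqref{khi} give
$$\mathbb{E}_\varepsilon\|f_\varepsilon v\|_p^q\le\Big(\mathbb{E}_\varepsilon\|f_\varepsilon v\|_p^p\Big)^{q/p}\lesssim_p\Big(\sum_j\Big(\sum_{k\le j}\lambda_k^2\Big)^{p/2}V_j\Big)^{q/p};$$
on the left, since $\big|\widehat{v^{-p'}\mathbbm{1}_{B_k}}(\xi)\big|\ge\tfrac12 W_k$ whenever $|\xi|<c\,t_k$ with an absolute constant $c$ (the phase being small there), Khinchin's inequality and discarding on each $\Xi_j$ the scales $k<j+C_0$ yield
$$\mathbb{E}_\varepsilon\|u\widehat{f_\varepsilon}\|_q^q\approx_q\int u^q\Big(\sum_k\lambda_k^2\,\big|\widehat{v^{-p'}\mathbbm{1}_{B_k}}(\xi)\big|^2\Big)^{q/2}d\xi\gtrsim_q\sum_j U_j\Big(\sum_{k\ge j+C_0}\lambda_k^2W_k^2\Big)^{q/2}.$$
Absorbing the shift $C_0$ by reindexing, I obtain
$$\Big(\sum_j U_j\Big(\sum_{k\ge j}\lambda_k^2W_k^2\Big)^{q/2}\Big)^{1/q}\lesssim_{p,q}C\Big(\sum_j\Big(\sum_{k\le j}\lambda_k^2\Big)^{p/2}V_j\Big)^{1/p}\qquad(\star)$$
for every finitely supported $\lambda\ge0$. (For $p=\infty$ one replaces the Jensen--Khinchin step by Lévy's inequality for the partial sums $\sum_{k\le j}\varepsilon_k\lambda_k$, the right side of $(\star)$ becoming $C\big(\sum_k\lambda_k^2\big)^{1/2}$; the case $p=1$ is elementary.)

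Next I would extract \eqref{5} from $(\star)$. Writing $a_k=\lambda_k^2$ and $b_j=\sum_{k\le j}a_k$ (a non-decreasing sequence), $(\star)$ raised to the power $2$ is a weighted Hardy inequality in which $b$ is constrained to be monotone, enters the right-hand side as $\big(\sum_j b_j^{p/2}V_j\big)^{2/p}$, and enters the left-hand side through the averages $\sum_{k\ge j}(b_k-b_{k-1})W_k^2$. After reflecting the index so that $b$ becomes non-increasing, the reduction theorems of Subsection~\ref{subsec:hardy} remove both the monotonicity restriction and the inner cumulative sum, turning $(\star)$ into an ordinary two-weight discrete Hardy inequality of the type treated in Lemma~\ref{theorem:discretehardy}, with parameters $\mathfrak{q}=q/2$, $\mathfrak{p}=p/2$, $\mathfrak{r}=r/2$. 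Computing its sharp constant from that lemma and simplifying identifies it, up to $\approx_{p,q}$, with
$$\Big(\sum_j U_j\Big(\sum_{k\le j}U_k\Big)^{r/p}W_j^{r/p'}\Big)^{1/r},$$
so $(\star)$ forces this quantity to be $\lesssim_{p,q}C$. Undiscretizing — replacing the dyadic sums by the corresponding integrals, legitimate because $t\mapsto\int_0^t\mathbf{U}(s)s^{d-1}ds$ and $t\mapsto\int_0^{1/(2\pi t)}\mathbf{V}(s)s^{d-1}ds$ are monotone — yields \eqref{5}; the substitution $t\mapsto t^{1/d}$, together with $\int_0^t\mathbf{U}(s)s^{d-1}ds\approx\int_0^{t^d}u^{*,q}$ and $\int_0^s\mathbf{V}(\tau)\tau^{d-1}d\tau\approx\int_0^{s^d}v_*^{-p'}$ for radial monotone weights, then gives \eqref{ineq:55}.

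I expect the passage $(\star)\Rightarrow\eqref{5}$ to be the main obstacle. The analytic ingredients (Khinchin, Lévy, Jensen) and the geometric ingredient (the low-frequency lower bound $\big|\widehat{v^{-p'}\mathbbm{1}_{B_k}}\big|\gtrsim W_k$ and the near-disjointness of the annuli $A_j$) are routine; the difficulty lies in handling the cumulative sums $\sum_{k\le j}\lambda_k^2$ on the right of $(\star)$ — equivalently, the monotonicity of $(b_j)$ — precisely enough that the reduction theorems apply and the resulting Hardy constant collapses exactly to \eqref{5}, while tracking all constants. A secondary technical point is the discretization/undiscretization, handled as usual either by allowing a flexible dyadic ratio or by arguing directly with the continuous conditions.
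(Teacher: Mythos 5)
Your general strategy (randomized superposition, Khinchin's inequality, a low-frequency lower bound on the Fourier transform, then reduce to a discrete Hardy inequality) is the same as the paper's, and you correctly identify that the passage from the intermediate discrete inequality $(\star)$ to condition \eqref{5} is where the difficulty lies. But that difficulty is not a merely technical one that the reduction theorems of Subsection~\ref{subsec:hardy} can be expected to absorb --- it is where the proof's key idea lives, and your construction has left that idea out.

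The paper does not use nested balls at fixed dyadic radii; it uses pairwise \emph{disjoint} annuli $[\alpha_n,\alpha_{n-1}]$ whose radii $\alpha_n$ are chosen \emph{adapted to the measure} $v^{-p'}$, so that $W_n:=\int_{\alpha_n<|x|<\alpha_{n-1}}v^{-p'}=I\,2^{-n}$ is exactly a geometric sequence. This choice has two decisive consequences that your construction lacks. First, because the annuli are disjoint, the right-hand side of the resulting discrete inequality is the plain sum $\big(\sum_n\lambda_n^p W_n\big)^{q/p}$, with no cumulative sum; after the change of variable $\mu_n=\lambda_n^2W_n^2$ it becomes \emph{directly} a two-weight inequality of the exact form covered by Lemma~\ref{theorem:discretehardy}. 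Your nested-ball construction instead produces $\sum_j\big(\sum_{k\le j}\lambda_k^2\big)^{p/2}V_j$ on the right, with an inner partial sum coupled to the opposite-direction tail sum on the left; this is not of Hardy form, and Theorems~\ref{theorem:gogatoriginal} and~\ref{th:oinarov} (which are used in the paper only for the characterization in Theorem~\ref{theorem:mainh}, on a quite different operator structure) do not handle the resulting differenced-weight expression that appears after summation by parts. Second, and more fundamentally, even if one got to a Hardy-type condition, Lemma~\ref{theorem:discretehardy} produces a constant involving either $\big(\inf_{n\ge m}W_n^{1-p}\big)^{-\mathfrak r/\mathfrak p}$ or $\big(\sum_{n\ge m}W_n^{(1-p)/(1-p/2)}\big)^{\mathfrak r/\mathfrak p'}$, and the paper collapses these to $W_m^{r/p'}$ precisely because $W_n=I\,2^{-n}$ is geometric. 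With your fixed dyadic-in-radius scales $\rho_k=(2\pi\cdot 2^k)^{-1}$, the quantities $W_j=\int_{|x|\le\rho_j}v^{-p'}$ are in general \emph{not} geometric, so this simplification fails and the Hardy condition you would obtain does not match \eqref{5}. In short, the measure-adapted disjoint annuli are not a cosmetic variant of your dyadic balls but the load-bearing idea of the proof, and your $(\star)$ appears to be a strictly weaker consequence of \eqref{ineq:pittnomono} (for a point mass $a_k=\delta_{k,N}$ it only recovers the sup condition \eqref{cond:intervals}). You should replace your test family with disjoint annuli whose $v^{-p'}$-mass is forced to be $2^{-n}$; the rest of your outline then goes through essentially as you wrote it.
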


\begin{proof}
If $p=1$ it is possible to obtain the exact value of the constant $C$ in inequality \eqref{ineq:pittnomono}, see
 %is trivial and it is discussed in 
 Proposition \ref{theorem:degheinig}. 
 
Let $p>1$.
Fix $M>0$.
By  \eqref{cond:intervals}, we know that $v^{-p'}$ is locally integrable and, hence, \begin{equation*}I:=\int_0 ^M \mathbf{V}(t) t^{d-1} dt<\infty.\end{equation*} Define the decreasing sequence $(\alpha _n)_{n\geq 0}$ so that it satisfies the following properties:
\begin{enumerate}[label=(\roman*)]
    \item  $\alpha_{-1}= \infty$, $\alpha_0=M;$
    \item for $n\geq 1$, \begin{equation*}W_n:=\int_{\alpha_{n}}^{\alpha_{n-1}} \mathbf{V}(t) t^{d-1} dt=\frac{I}{2^n}.\end{equation*}
\end{enumerate}
Let $\lambda_n\in \mathbb{R}_+$ be an arbitrary sequence with finitely non-zero terms. Consider the sequence $(f_n)_{n\geq 1}$, where $f_n(x)= \lambda_n v^{-p'}(x) \mathbbm{1}_{[\alpha_{n}, \alpha_{n-1}]}(|x|)$.  
Let \begin{equation*}f=\sum_{n=1}^\infty \varepsilon _n f_n,\end{equation*} where $\varepsilon_n=\pm 1$.
Then, for this $f$  inequality \eqref{ineq:pittnomono} yields 
\begin{align*}\int_{\mathbb{R}^d} u^q(\xi) \left|\sum_{n=1}^\infty \varepsilon_n \widehat{f}_n (\xi)\right|^q d \xi
&\leq C^q \norm{fv}_p^q
\\ &
%\qquad\qquad\quad
= C^q \left(\sum_{n=1}^\infty  \lambda _n^p \int_{\alpha_{n}}^{\alpha_{n-1}} \mathbf{V}(t) t^{d-1} dt \right)^{\frac{q}{p}}\\ &= C^q \left(\sum_{n=1}^\infty \lambda _n ^p W_n  \right)^{\frac{q}{p}}.\end{align*}
Here taking expected values of both sides and using  the Khinchin inequality \eqref{khi}, we deduce that
\begin{equation}
\label{eq:mothereq}
\int_{\mathbb{R}^d} u^q(\xi) \left|\sum_{n=1}^\infty |\widehat{f} _n (\xi)|^2 \right|^\frac{q}2 d \xi
\lesssim_q C^q \left(\sum_{n=1}^\infty \lambda _n ^p W_n  \right)^{\frac{q}{p}}.
\end{equation} 
Furthermore, 
  for $0\leq |\xi| \leq \frac{1}{2 \pi \alpha_{n-1}}$ we have
\begin{eqnarray*}
|\widehat{f_n}(\xi)|
&=& \Big|\lambda_n\int_{\alpha_{n}\leq |x|\leq{\alpha_{n-1}}}v^{-p'}(x) e^{-2\pi i \langle x,\xi \rangle }dx\Big|
\\
&\gtrsim&
  \lambda_n \int_{\alpha_{n}\leq |x|\leq{\alpha_{n-1}}} v^{-p'}(x) dx=\lambda_n W_n.
\end{eqnarray*} Observing that
\begin{align*}
    \int_{\mathbb{R}^d} u^q(\xi) \left|\sum_{n=1}^\infty |\widehat{f} _n (\xi)|^2 \right|^\frac{q}2 d \xi  %\\&\qquad\qquad
    &\geq \sum_{m=-1}^\infty \int_{\frac{1}{2 \pi \alpha _{m}} \leq |\xi | \leq \frac{1}{2 \pi \alpha _{m+1}}} u^q(\xi) \left|\sum_{n=m+2}^\infty |\widehat{f} _n (\xi)|^2 \right|^\frac{q}2 d \xi 
     \\&%\qquad\qquad
     \gtrsim \sum_{m=-1}^\infty \int_{\frac{1}{2 \pi \alpha _{m}} \leq |\xi | \leq \frac{1}{2 \pi \alpha _{m+1}}} u^q(\xi)  d \xi  \left|\sum_{n=m+2}^\infty \lambda_n^2  W_n^2  \right|^\frac{q}2,
\end{align*}
we conclude that the inequality
\begin{equation*}\left(\sum_{m=-1}^\infty \int_{\frac{1}{2 \pi \alpha _{m}} \leq |\xi | \leq \frac{1}{2 \pi \alpha _{m+1}}} u^q(\xi) d \xi \left(\sum_{n=m+2}^{\infty} \lambda_n^2 W_n ^2 \right)^{\frac{q}{2}}\right)^{\frac{1}{q}} \lesssim_q C \left(\sum_{n=1}^\infty \lambda _n ^p W_n \right)^{\frac{1}{p}}\end{equation*}
 holds for any non-negative $\lambda_n$.

 Equivalently, for any  non-negative $\lambda_n$,
we have \begin{equation*}\left(\sum_{m=1}^\infty \int_{\frac{1}{2 \pi \alpha _{m-2}} \leq |\xi | \leq \frac{1}{2 \pi \alpha _{m-1}}} u^q(\xi) d \xi \left(\sum_{n=m}^{\infty} \lambda_j \right)^{\frac{q}{2}}\right)^{\frac{2}{q}} \lesssim_q C^2\left(\sum_{n=1}^\infty \lambda _n ^{\frac{p}{2}} W_n^{1-p} \right)^{\frac{2}{p}}.\end{equation*}
The last inequality can be seen to imply condition \eqref{5} through routine arguments.

First, from the characterization of the Hardy inequalities (Lemma \ref{theorem:discretehardy} with $\mathfrak{q}=\frac{q}{2}$ and $\mathfrak{p}=\frac{p}{2}$), we deduce the following conditions:
%\begin{enumerate}[label=(\roman*)]
 %   \item[$\cdot$]
 \\
  if $1\leq p\leq 2$,
   \begin{align*} &\left(\sum_{m=1}^\infty\left( \int_{0 \leq |\xi|\leq \frac{1}{2 \pi \alpha_{m-1}}} u^q(\xi) d \xi \right) ^{\frac{r}{p}} \left( \inf_{n \geq m}W_n ^{1-p}\right)^{-\frac{r}{p}} 
  \right. \\
&\qquad\qquad\qquad\qquad\qquad\qquad\quad\left.
  \times
   \int_{\frac{1}{2 \pi \alpha _{m-2}} \leq |\xi | \leq \frac{1}{2 \pi \alpha _{m-1}}} u^q(\xi) d \xi \right)^{\frac{2}{r}} \lesssim_{p,q} C^2;\end{align*}
  if $p>2$,
   \begin{align*} &
   \left(\sum_{m=1}^\infty\left( \int_{0 \leq |\xi|\leq \frac{1}{2 \pi \alpha_{m-1}}} u^q(\xi) d \xi \right) ^{\frac{r}{p}} \left(\sum_{n=m}^\infty W_n^{\frac{1-p}{1-p/2}}\right) ^{\frac{r}{2}-\frac{r}{p}}
   \right. \\
&\qquad\qquad\qquad\qquad\qquad\qquad\quad\left.
  \times
   \int_{\frac{1}{2 \pi \alpha _{m-2}} \leq |\xi | \leq \frac{1}{2 \pi \alpha _{m-1}}} u^q(\xi) d \xi \right)^{\frac{2}{r}} \lesssim_{p,q} C^2.\end{align*}
   %\end{enumerate}

Second, observe that $W_n=I 2^{-n}$. Hence $\left(\inf_{n\geq m} W_n^{1-p}\right)^{-\frac{r}{p}}=W_m^{\frac{r}{p'}}$ and, for $p>2$, \begin{equation*}\left(\sum_{m=m}^\infty W_n^{\frac{1-p}{1-p/2}}\right) ^{\frac{r}{2}-\frac{r}{p}} \approx_{p,q} W_m^{\frac{r}{p'}} \approx_{p,q} \left(\int_{0}^{\alpha_{m-1}}\mathbf{V}(t) t^{d-1} dt\right)^{\frac{r}{p'}}.\end{equation*} Thus, both previous expressions are equivalent to
\begin{align*} & \left(\sum_{m=1}^\infty\left( \int_{0 \leq |\xi|\leq \frac{1}{2 \pi \alpha_{m-1}}} u^q(\xi) d \xi \right) ^{\frac{r}{p}} \left(\int_0 ^ {\alpha_{m-1}}\mathbf{V}(t) t^{d-1} dt \right) ^{\frac{r}{p'}}
\right. \\
&\qquad\qquad\qquad\qquad\qquad\qquad\quad\left.
  \times
\int_{\frac{1}{2 \pi \alpha _{m-2}} \leq |\xi | \leq \frac{1}{2 \pi \alpha _{m-1}}} u^q(\xi) d \xi \right)^{\frac{1}{r}} \lesssim_{p,q} C,\end{align*}
which is the same as 
\begin{align*} & \left(\sum_{m=1}^\infty \int_{\frac{1}{2 \pi \alpha _{m-2}}}^{\frac{1}{2 \pi \alpha _{m-1}}} \mathbf{U}(t) t^{d-1}d t \left( \int_{0}^{\frac{1}{2 \pi \alpha_{m-1}}} \mathbf{U}(t) t^{d-1} d t \right) ^{\frac{r}{p}} 
\right. \\
&\qquad\qquad\qquad\qquad\qquad\qquad\quad\left.
  \times
\left(\int_0 ^ {\alpha_{m-1}}\mathbf{V}(t) t^{d-1} dt \right) ^{\frac{r}{p'}} \right)^{\frac{1}{r}} \lesssim_{p,q} C.\end{align*}

Finally, by monotonicity, we conclude that

\begin{align*}&\left(\int_{\frac{1}{2 \pi M}}^\infty \mathbf{U}(t) t^{d-1}  \left(\int_0 ^t \mathbf{U}(s) s^{d-1} ds\right)^{\frac{r}{p}} \left(\int_0 ^{\frac{1}{2 \pi t}} \mathbf{V}(s) s^{d-1} ds\right)^{\frac{r}{p'}} dt\right)^{\frac{1}{r}} \\
&\leq  \left(\sum_{m=2}^\infty \int_{\frac{1}{2 \pi \alpha _{m-2}}}^{\frac{1}{2 \pi \alpha _{m-1}}} \mathbf{U}(t) t^{d-1} dt\left( \int_{0}^{\frac{1}{2 \pi \alpha_{m-1}}} \mathbf{U}(t) t^{d-1} d t \right) ^{\frac{r}{p}} \left(\int_0 ^ {\alpha_{m-2}}\mathbf{V}(t) t^{d-1} dt \right) ^{\frac{r}{p'}} \right)^{\frac{1}{r}} \\
&\lesssim_{p,q}  \left(\sum_{m=2}^\infty \int_{\frac{1}{2 \pi \alpha _{m-2}}}^{\frac{1}{2 \pi \alpha _{m-1}}} \mathbf{U}(t) t^{d-1}dt \left( \int_{0}^{\frac{1}{2 \pi \alpha_{m-1}}} \mathbf{U}(t) t^{d-1} d t \right) ^{\frac{r}{p}} \left(\int_0 ^ {\alpha_{m-1}}\mathbf{V}(t) t^{d-1} dt \right) ^{\frac{r}{p'}} \right)^{\frac{1}{r}} \\
&\lesssim_{p,q}  C,\end{align*}
where we used that, for $m\geq 2$,
\begin{equation*} \int_0 ^ {\alpha_{m-1}}\mathbf{V}(t) t^{d-1} dt\approx \int_0 ^ {\alpha_{m-2}}\mathbf{V}(t) t^{d-1} dt.\end{equation*} The result follows by letting $M \to \infty$.
\end{proof}
\begin{remark}
\label{remark:K}

    Note that in the proof of Theorem \ref{lemma:necesnew} the only needed property of the Fourier transform is $$|\widehat{f}(\xi)| \gtrsim \int_{|x|\le s} f(x)dx\quad  \quad{for}\quad 
    |\xi|\lesssim \frac{1}{2\pi s}$$ for   any non-negative function $f$ supported on $[0,s]$. Thus, the result also holds true for any integral transform  $Tf (\xi)=\int_{\mathbb{R}^d} K(\langle \xi,x \rangle ) f(x) dx$ with $K(y)\gtrsim 1$ for $|y|\leq 1$.
See \cite{indiana} for  further  examples of such transforms.

\end{remark}
Combining Theorem \ref{lemma:necesnew} and Proposition \ref{lemma:case1} with the characterization of the Hardy inequality (Lemma \ref{theorem:cont hardy2}), we deduce 
\begin{corollary}\label{coro:FH}
   Let
    $1\leq p \leq \infty$ and $0<q<\infty$. Then
   inequality \eqref{ineq:pittgen} with radial, monotone weights always implies the Hardy type inequality
    $$\norm{ u^* \int_0 ^{1/t} f}_q \lesssim \norm{fv_*}_p.$$
\end{corollary}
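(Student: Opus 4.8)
The plan is to rewrite the claimed estimate as a one-dimensional weighted Hardy inequality and then feed the necessary conditions of Proposition~\ref{lemma:case1} and Theorem~\ref{lemma:necesnew} (applied to \eqref{ineq:pittnomono}, equivalently to \eqref{ineq:pittgen}) into the Hardy-inequality characterizations of Lemma~\ref{theorem:cont hardy2}. First I would reduce to $f\ge 0$ (passing to $|f|$ does not decrease $\big|\int_0^{1/t}f\big|$ and leaves $\norm{fv_*}_p$ unchanged), and then use the substitution $t=1/s$: writing $\mathfrak{u}(s):=u^{*,q}(1/s)\,s^{-2}$,
\[
\norm{u^*\int_0^{1/t}f}_q^{\,q}=\int_0^\infty u^{*,q}(t)\Big(\int_0^{1/t}f\Big)^{q}dt=\int_0^\infty \mathfrak{u}(s)\Big(\int_0^{s}f\Big)^{q}ds .
\]
Thus the conclusion of the corollary is exactly the weighted Hardy inequality $\big(\int_0^\infty\mathfrak{u}(s)(\int_0^s f)^q\,ds\big)^{1/q}\lesssim_{p,q}\big(\int_0^\infty v_*^{p}f^{p}\,ds\big)^{1/p}$ with density pair $(\mathfrak{u},\,v_*^{p})$ and exponents $(q,p)$. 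The same substitution in reverse yields the two bookkeeping identities $\int_s^\infty\mathfrak{u}=\int_0^{1/s}u^{*,q}$ and $\int_0^\infty\mathfrak{u}(s)\Phi(1/s)\,ds=\int_0^\infty u^{*,q}(\tau)\Phi(\tau)\,d\tau$ for $\Phi\ge 0$; moreover $v_*^{\,p/(1-p)}=v_*^{-p'}$, which is precisely the quantity occurring in \eqref{cond:intervals} and \eqref{ineq:55} (in Lemma~\ref{theorem:cont hardy2} the ``$v$'' is the density inside $\int v g^{\mathfrak p}$, so it corresponds to $v_*^{p}$ here).

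Next I would split into the two ranges. If $p\le q$, Proposition~\ref{lemma:case1} gives \eqref{cond:intervals}; since $u^{q}$ and $v^{-p'}$ are radial and non-increasing, the supremum over cubes is, up to dimensional constants, attained on cubes centred at the origin, and for a centred cube of sidelength $\rho$ one has $\int u^{q}\approx_d\int_0^{\rho^{d}}u^{*,q}$ and $\int v^{-p'}\approx_d\int_0^{\rho^{d}}v_*^{-p'}$, so \eqref{cond:intervals} reduces to $\sup_{\sigma>0}\big(\int_0^{\sigma}u^{*,q}\big)^{1/q}\big(\int_0^{1/\sigma}v_*^{-p'}\big)^{1/p'}\lesssim_{p,q}C$. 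By the identity $\int_s^\infty\mathfrak{u}=\int_0^{1/s}u^{*,q}$ this is exactly condition \eqref{eq:condiconthardy21} of Lemma~\ref{theorem:cont hardy2}(i) for $(\mathfrak{u},v_*^{p})$, $(q,p)$, hence equivalent to the Hardy inequality above. If $q<p$, Theorem~\ref{lemma:necesnew} gives \eqref{ineq:55}; substituting $s=1/\tau$ in condition \eqref{eq:condiconthardy2} of Lemma~\ref{theorem:cont hardy2}(ii) for $(\mathfrak{u},v_*^{p})$, $(q,p)$ (with $\tfrac1r=\tfrac1q-\tfrac1p$) and applying the identities above transforms it term by term into the left-hand side of \eqref{ineq:55}, so \eqref{ineq:55} is again equivalent to the Hardy inequality. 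The boundary exponents $p\in\{1,\infty\}$ I would handle directly: there the choice $f=1/v_*$ is extremal and reduces the claim to a bound already contained in Proposition~\ref{lemma:case1} (equivalently, to the explicit value of the optimal constant $C$ in those cases).

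The only delicate part is the bookkeeping: carrying out the two changes of variables cleanly and verifying that the reduction of \eqref{cond:intervals} to centred cubes costs only a dimensional constant. Beyond that I do not anticipate a substantive obstacle, since once the target is cast in the Hardy form $(\mathfrak{u},v_*^{p})$, $(q,p)$, the two ranges become immediate invocations of parts (i) and (ii) of Lemma~\ref{theorem:cont hardy2}, fed respectively by \eqref{cond:intervals} and \eqref{ineq:55}.
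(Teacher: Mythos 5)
Your argument is correct and is exactly the route the paper intends: it deduces Corollary~\ref{coro:FH} by feeding Proposition~\ref{lemma:case1} and Theorem~\ref{lemma:necesnew} into the Hardy characterizations of Lemma~\ref{theorem:cont hardy2}, and your change of variables $t=1/s$ with $\mathfrak{u}(s)=u^{*,q}(1/s)\,s^{-2}$ is precisely the bookkeeping that matches \eqref{cond:intervals} to \eqref{eq:condiconthardy21} when $p\le q$ and \eqref{ineq:55} to \eqref{eq:condiconthardy2} when $q<p$. The one slip is at $p=\infty$ (necessarily in the range $q<p$), where the extremal $f=v_*^{-1}$ reduces the claim to \eqref{ineq:55}, i.e.\ to Theorem~\ref{lemma:necesnew}, not to Proposition~\ref{lemma:case1} nor to the explicit constant of Remark~\ref{theorem:degheinig}, which covers only $p=1$ and $q=\infty$.
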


\subsection{The case \texorpdfstring{$q<2<p$}{q<2<p}}
In the Banach space regime, that is, for $1\leq p,q \leq \infty$, a duality argument (see Lemma \ref{lemma:duality}) shows that the validity of inequality \eqref{ineq:pittgen} is preserved under the transformation $(u,v,q,p) \mapsto (v^{-1},u^{-1},p',q')$. 
While it is easy to see that both \eqref{cond:intervals} and \eqref{ineq:55} do not change under the aforementioned transformation, this is clearly not the case for  condition \eqref{new-n}.
%However, in condition \eqref{new-n} it is clear that $u$ and $v$ play a completely different role.
% It is therefore natural to look for a symmetric version of \eqref{new-n} valid in the case $1\leq q<2<p\leq \infty$.

In this section we obtain a symmetric version of \eqref{new-n} for the case
$0< q<2$ and $2<p\leq \infty$. Unlike the proofs of Lemma \ref{lemma:prelinec} and Theorem \ref{lemma:necesnew}, in this case the proof is
 non-constructive 
 and relies on the factorization result given in Theorem \ref{coro:usedfactor}.

\begin{lemma}\label{prop:nopacific}Assume that inequality \eqref{ineq:pittnomono} holds with $0<q<2<p\leq \infty $  for any $f\in L^1$. Then,
\begin{enumerate}[label=(\roman*)]

\item 
We have
 \begin{equation}\label{symmetric}
 \sup _{t>0}  \left(\sum_{ n\in \mathbb{Z}^d}  \left(\int_{\frac n{t}+ \frac1{t}[-\frac{1}{2}, \frac{1}{2}]^d} v^{-p'}\right) ^{\frac{p^{\sharp}}{p'}} \right) ^{\frac{1}{p^{\sharp}}}  \left(\sum_{ n\in \mathbb{Z}^d}  \left(\int_{nt+ t[-\frac{1}{2}, \frac{1}{2}]^d} u^q\right) ^{\frac{q^{\sharp}}{q}} \right) ^{\frac{1}{q^{\sharp}}} \lesssim_{p,q} C.
 \end{equation}
   
%As a consequence,  we obtain that

\item For a fixed $t>0$,
assume that  $u$ is radial non-increasing for $|x| \geq t/2$ and $v^{-1}$ is radial non-increasing for $|x| \geq 1/2t$; then %    we have 
\begin{equation}
\label{eq:nopac2}  
\left(\int_{1/t}^\infty v_0^{-p^{\sharp}}(y) y^{d-1} dy\right)^{\frac{1}{p^{\sharp}}}\left(\int_{t}^\infty u_0^{q^{\sharp}}(\xi) \xi^{d-1} d\xi \right)^{\frac{1}{q^{\sharp}}}\lesssim_{p,q} C.  
\end{equation}
\end{enumerate}
   %\begin{equation*}\sup _t (\int_{\frac{1}{2t}}^\infty v(y)^{-R} y^{d-1}  dy )^{\frac{1}{p^{\sharp}}} (\int_{\frac{t}{2}}^\infty u(y)^{q^{\sharp}} y^{d-1} dy)^{\frac{1}{q^{\sharp}}}\lesssim C.\end{equation*}
\end{lemma}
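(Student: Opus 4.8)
\medskip
\noindent\emph{Proof plan.}
I would prove part (i) first and then deduce (ii) from it by a polar–coordinate computation. Since $0<q\le 2\le p\le\infty$ and the Fourier transform is linear, the plan is to begin with Theorem~\ref{coro:usedfactor} applied to $T=\widehat{\;\cdot\;}$: this produces a measurable weight $h\ge 0$ with $\norm{h^{-1}}_{q^{\sharp}}=1$ such that
\[
\norm{uh\,\widehat f}_2\lesssim_q C\,\norm{fv}_p
\]
for all relevant $f$ (the finitely supported test functions used below lie in $L^1$ because $v^{-p'}$ is locally integrable by Proposition~\ref{lemma:case1}). Thus the target $L^q(u)$ is traded for $L^2(uh)$ at the price of the \emph{global} normalisation $\int_{\R^d}h^{-q^{\sharp}}=1$; distributing this normalisation over a tiling is exactly what will let a single–ball estimate be promoted to the $\ell^{q^{\sharp}}$–sum on the $u$–side.

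\emph{First step — a single–ball estimate.} Fix $t>0$, let $\{Q_n\}_{n\in\Z^d}$ be the cubes $\frac nt+\frac1t[-\frac12,\frac12]^d$, and for finitely supported $\lambda_n\ge0$ and signs $\varepsilon_n$ set $f=\sum_n\varepsilon_n\lambda_n v^{-p'}\mathbbm{1}_{Q_n}$, so $\norm{fv}_p^p=\sum_n\lambda_n^p V_n$ with $V_n=\int_{Q_n}v^{-p'}$ (and $\norm{fv}_\infty=\max_n\lambda_n$ when $p=\infty$). Because the Rademacher functions are orthonormal in $L^2$, taking expectations in the factorised inequality and using Jensen ($p\ge 2$) gives $\int(uh)^2\sum_n\lambda_n^2|\widehat{f_n}|^2\lesssim_q C^2(\sum_n\lambda_n^pV_n)^{2/p}$, where $f_n=v^{-p'}\mathbbm{1}_{Q_n}$; here no Khinchin inequality is needed, unlike in Lemma~\ref{lemma:prelinec}. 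For $|\xi|\le c_d/t$ the phase $e^{-2\pi i\langle\cdot,\xi\rangle}$ does not wind over $Q_n$, so $|\widehat{f_n}(\xi)|\gtrsim V_n$; keeping only such $\xi$ and optimising in $\lambda$ exactly as in Lemma~\ref{lemma:prelinec} (i.e.\ $\lambda_n^{p-2}\propto V_n$) yields, for every ball $B$ of radius $c_d/t$ (centred anywhere, by modulating $f$),
\[
\Big(\int_{B}(uh)^2\Big)^{1/2}\Big(\sum_n V_n^{\,p^{\sharp}/p'}\Big)^{1/p^{\sharp}}\lesssim_q C .
\]

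\emph{Second step — promotion to the $u$–amalgam.} Let $\{Q'_m\}$ tile $\R^d$ by cubes of side $\lesssim 1/t$, each contained in a ball of radius $c_d/t$. Hölder with exponents $2/q$ and $2/(2-q)$ (so that $2q/(2-q)=q^{\sharp}$) gives $\int_{Q'_m}u^q\le(\int_{Q'_m}(uh)^2)^{q/2}(\int_{Q'_m}h^{-q^{\sharp}})^{q/q^{\sharp}}$; raising to the power $q^{\sharp}/q$, summing over $m$, and using $\sum_m\int_{Q'_m}h^{-q^{\sharp}}=\int_{\R^d}h^{-q^{\sharp}}=1$ yields $\sum_m(\int_{Q'_m}u^q)^{q^{\sharp}/q}\le(\sup_m\int_{Q'_m}(uh)^2)^{q^{\sharp}/2}$. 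Combined with the single–ball estimate, the product of the two amalgam norms is $\lesssim_q C$, with $v$–cubes of side $1/t$ and $u$–cubes of side $\lesssim 1/t$. Finally, since $p^{\sharp}/p'\ge 1$ and $q^{\sharp}/q\ge 1$, the elementary comparison $\sum_i(\sum_j a_{ij})^{\alpha}\approx_N\sum_{i,j}a_{ij}^{\alpha}$ (for $\alpha\ge1$ and boundedly many terms in each inner sum) shows that rescaling the $u$–cubes to side $t$ costs only a dimensional constant; after relabelling $t\leftrightarrow 1/t$ this is \eqref{symmetric}. All sums are kept finite by the same truncation (insert a cutoff $M$, let $M\to\infty$) used in the proof of Theorem~\ref{lemma:necesnew}.

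\emph{Part (ii), and the expected obstacle.} Under the monotonicity hypotheses, every cube $Q_n$ with $n\neq0$ of side $1/t$ sits in the region where $v^{-1}$ is radial non-increasing, so monotonicity together with a grouping of the $n$ by $|n|_\infty$ and a polar–coordinate integral gives $(\sum_n(\int_{Q_n}v^{-p'})^{p^{\sharp}/p'})^{1/p^{\sharp}}\gtrsim_d t^{-d/2}(\int_{1/t}^\infty v_0^{-p^{\sharp}}(y)y^{d-1}dy)^{1/p^{\sharp}}$, the exponent $-d/2$ being $d/p'-d/p^{\sharp}$; the same argument on the frequency side gives the companion bound with $t^{+d/2}$ and $d/q-d/q^{\sharp}=d/2$. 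Multiplying, the factors $t^{\mp d/2}$ cancel and \eqref{symmetric} reduces to \eqref{eq:nopac2}. I expect the only genuine difficulty to be the second step of part (i): recognising that, once the target is $L^2(uh)$, the ``mass budget'' $\norm{h^{-1}}_{q^{\sharp}}=1$ spread over a tiling is precisely what converts a one–ball bound into honest $\ell^{q^{\sharp}}$–summability of $u$. Everything else — the high-dimensional phase estimate, the $p=\infty$ endpoint (which in fact simplifies, as $v^{1-p'}\equiv1$), and the truncation — is routine bookkeeping.
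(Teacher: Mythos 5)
Your proposal follows the same route as the paper: (1) Maurey factorization via Theorem~\ref{coro:usedfactor} to trade the $L^q(u)$ target for $L^2(uh)$ with $\norm{h^{-1}}_{q^\sharp}=1$; (2) the translation/randomization argument — essentially a re-run of the proof of Lemma~\ref{lemma:prelinec} with the factorized $L^2$ target — to obtain an estimate on a single frequency cube paired with the $\ell^{p^\sharp/p'}$ amalgam of $v^{-p'}$; and (3) Hölder's inequality on each cube of a tiling, spending the global budget $\int h^{-q^\sharp}=1$, to convert the single-cube $L^2(uh)$ bound into the $\ell^{q^\sharp/q}$ amalgam of $u^q$. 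The paper simply invokes Lemma~\ref{lemma:prelinec} (applied to \eqref{factor-aux}) for step (2) instead of re-deriving it; your remark that Khinchin is not needed when the exponent is $2$ is correct but cosmetic. Your part (ii), via the cancelling prefactors $t^{\mp d/2}$, is correct (though you have $d/p'-d/p^\sharp=+d/2$, not $-d/2$, in the parenthetical — the displayed exponent $-d/2$ is right); the paper instead scales to $t=1$ and compares ring by ring.

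Two details need repair. First, there is a scale inconsistency in the first step: with $Q_n$ of side $1/t$, the phase $e^{-2\pi i\langle x,\xi\rangle}$ does not wind over $Q_n$ precisely for $|\xi|\lesssim t$, \emph{not} $|\xi|\lesssim 1/t$; correspondingly the ball $B$ and the tiles $Q'_m$ should have scale $\sim t$, not $\sim 1/t$. As written, both amalgams end up living at scale $1/t$, and no relabelling $t\leftrightarrow 1/t$ converts ``same scale on both sides'' into the \emph{reciprocal} scales that \eqref{symmetric} demands. Once the phase condition is corrected, the $u$-cubes already have side $\sim t$, \eqref{symmetric} follows directly, and the final relabelling is unnecessary (equivalently, keep your phase condition and set $Q_n=nt+t[-\frac12,\frac12]^d$). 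Second, Theorem~\ref{coro:usedfactor} requires the Fourier transform to be a bounded operator on \emph{all} of $L^p(v)$, not merely on the test functions you feed in; since $p>2$, one does not generically have $L^p(v)\subset L^1+L^2$, so $\widehat{\,\cdot\,}$ need not even be defined there. The paper resolves this by replacing $v$ with $v_{\varepsilon,N}:=\max(\varepsilon,v)\mathbbm{1}^{-1}_{[0,N]}(|x|)$, which forces $L^p(v_{\varepsilon,N})\subset L^1$, applying the factorization there, and letting $\varepsilon\to 0$, $N\to\infty$ at the very end; your observation that the test functions lie in $L^1$ justifies \emph{using} the factorized inequality but not the \emph{existence} of $h$.
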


\begin{proof}
(i) % Let $X$ be the closure of the bounded and compactly supported functions with respect to $\norm{\cdot}_{p,v}$. Clearly, $X$ is Banach lattice and the Fourier transform maps $X$ into the Banach lattice $L^p_v$.
By replacing $v$ by $ v_{\epsilon,N}(x):= \max(\varepsilon, v(x)) \mathbbm{1}^{-1}_{[0,N]}(|x|)$, we see that the Fourier transform maps $L^p({v_{\varepsilon,N}})\subset L ^1$ into $L^q(u)$. 
Thus, by applying Theorem \ref{coro:usedfactor}, we find
 $h$ with $\norm{h^{-1}}_{q^\sharp}=1$ such that for any $f\in L^1$
 \begin{equation}
 \label{factor-aux}
 \norm{\widehat{f} u h}_2 \lesssim_q C \norm{fv_{\varepsilon,N}}_p.\end{equation}
    From $\norm{h^{-1}}_{q^\sharp}=1$, by Hölder's inequality, we obtain
    \begin{equation*}\left(\sum_{ n\in \mathbb{Z}^d}  \left(\int_{nt+ t[-\frac{1}{2}, \frac{1}{2}]^d} u^q\right) ^{\frac{q^{\sharp}}{q}} \right) ^{\frac{1}{q^{\sharp}}} \leq   \sup_{ |A|=t} \left( \int_{A} u^2  h^2  \right)^{\frac{1}{2}},\end{equation*}
    where the supremum is taken over all
    cubes
    $A$ with sidelength 
    ${|A|=t}$.
    Next, applying  Lemma \ref{lemma:prelinec} to
    inequality 
    \eqref{factor-aux}, we get
    \begin{equation*}\sup_{t>0, |A|=t} \left( \int_{A} u^2  h^2\right)^{\frac{1}{2}}   \left(\sum_{ n\in \mathbb{Z}^d}  \left(\int_{\frac n{t}+ \frac1{t}[-\frac{1}{2}, \frac{1}{2}]^d} v^{-p'}_{\varepsilon,N}\right) ^{\frac{p^{\sharp}}{p'}} \right) ^{\frac{1}{p^{\sharp}}}\lesssim_{p,q} C.
    \end{equation*} 
       Letting $\varepsilon \to 0$ and $N \to \infty$
       completes 
       the  proof of part (i).
        
To prove (ii), by a scaling argument, it suffices to see that 
\eqref{symmetric} implies \eqref{eq:nopac2}
 for $t=1$.
    Observe that for $k\geq 1$, by monotonicity,\begin{eqnarray*}\sum_{ \norm{n}_\infty =k}   \left(\int_{n+ [-\frac{1}{2}, \frac{1}{2}]^d} u^q \right) ^{\frac{q^{\sharp}}{q}}  &\gtrsim_q& k^{d-1} u_0^{q^{\sharp}} (\sqrt{d}(k+1/2)) \\&\gtrsim_q& \int_{\sqrt{d}(k+1/2) }^ {\sqrt{d}(k+3/2) } u_0^{q^{\sharp}}(\xi) \xi^{d-1} dy.
    \end{eqnarray*}
    Moreover, for     $k=1$,
    \begin{equation*}
    \sum_{ \norm{n}_\infty =k}   \left(\int_{n+ [-\frac{1}{2}, \frac{1}{2}]^d} u^q\right) ^{\frac{q^{\sharp}}{q}}  \gtrsim_q  u_0^{q^{\sharp}} (1) \gtrsim \int_{1 }^ {3\sqrt{d}/2 } u_0^{q^{\sharp}}(\xi) \xi^{d-1} d\xi.
    \end{equation*}
   A similar argument for $v$ completes the proof.
    \end{proof}

\section{Proof of Theorem \ref{theorem:mainabstract}}
\label{section:mainproofs}
In this section we give the proof of our main result.
First, using Theorem \ref{lemma:necesnew} and Lemma \ref{prop:nopacific}, we prove the implication $(i)\implies (ii)$ in Theorem \ref{theorem:mainabstract} for $Y=L^\infty(v)$ and $X=L^1(u^q)$. Second, 
 %using standard arguments,
 we extend this result to the general case, completing the proof of Theorem \ref{theorem:mainabstract}.
%  by standard arguments, we extend the previous result to the general case. 
 
\begin{proof}[Proof of $(i)\implies (ii)$ in Theorem \ref{theorem:mainabstract} 
for a special case]
$ $\newline
We consider the case $\norm{ |F|^\beta}^{\frac{1}{\beta}}_X = \norm{Fu}_q$ and  $\norm{G}_Y = \norm{Gv}_\infty$ for $u,v^{-1}$ radial non-increasing and $0<q\leq 1$. Note that in this case inequality \eqref{ineq:pittXY} with $\beta=q$ becomes \eqref{ineq:pittgen}.    
Our goal is to show that if the inequality 
\begin{equation}
    \label{ineq:pittgen++}
\big\|u\widehat{f}\,\big\|
_{q}\lesssim \norm{ f v}_{\infty}, \end{equation}
%\eqref{ineq:pittgen}
holds, then $$\norm{F^\circ u}_q \lesssim_{q} C \norm{G^\circ v}_ \infty,
\quad\mbox{ for any}
\quad F\prec G.
$$
By Corollary \ref{coro:method}, this yields inequality \eqref{ineq:Tfxy}.

Let $F\prec G$ and $\norm{G^\circ v}_\infty\leq1$. We have to show that $\norm{F^\circ u}_q \lesssim_{q} C$ or, equivalently, \begin{equation}
\label{eq:step1}
    \norm{F^* u^*}_q \lesssim_{q} C.
\end{equation} Observe that it suffices to obtain this for $F\prec G=v^{-1}$.

To begin with, note that by Lemma \ref{prop:nopacific} applied to \eqref{ineq:pittgen++}, we have  $\int_1^\infty v^{-2}_0(y) y^{d-1} dy<\infty$.   
Thus, there exists a decreasing sequence $0\leq y_n<
y_{n-1}$ which satisfies 
\begin{equation*} \int_{y_n ^d}^{ y_{n-1}^d} v_*^{-2}(y) dy \approx \int_{  y_{n}}^{   y_{n-1}} v_0^{-2}(y) y^{d-1} dy \approx   2^n,\end{equation*} for  any integer $n \in 
(-\infty, + \infty )$ if $\int_0^\infty v^{-2}_0(y)y^{d-1} dy = \infty$, and for any integer $n \in (-\infty,N]$ if $\int_0^\infty v^{-2}_0(y)y^{d-1} dy <\infty$. In the latter case we also impose that $y_N=0$.

Set $$x_n :=\frac{1}{y_n}.$$ 
%We now split the sets $\mathbb{Z}$ and $\mathbb{Z} \cap (-\infty, N)$ into two disjoint parts, corresponding to the former and latter cases, respectively.
In the case $\int_0^\infty v^{-2}_0(y)y^{d-1} dy = \infty$
we  split the set $\mathbb{Z}$ into two disjoint parts
${I}\cup {J}$ and, similarly, we split the set
 $\mathbb{Z} \cap (-\infty, N)$ 
 if the integral is finite.

If $y_n >2 y_{n+1}$, (equivalently, $2x_n<x_{n+1}$) we say that $n \in I$; otherwise, we say that $n \in J$.

\textbf{Step 1.} We claim that inequality \eqref{ineq:pittgen++} implies 
\begin{equation}
\label{ineq:necstep1}
    \left(\int_0^\infty  u^{*,q} \left( \int_0 ^{x^{-1}} v_*^{-1}\right)^q \right)^{\frac{1}{q}}\lesssim_{q} C
\end{equation} and
\begin{equation}
\label{ineq:necstep2}
\left(\sum_{n \in I} \left(\int_{ x_n^d}^{x_{n+1}^d}u^{*,q^{\sharp}} \right)^{\frac{q}{q^{\sharp}}} 2^{\frac{qn}{2}}\right)^{\frac{1}{q}} \lesssim_{q} C.
\end{equation}
%\sum_{n\in I} \left(\int_{ x_n^d}^{x_{n+1}^d}  u^{*,q^{\sharp}} \right)^{\frac{q}{q^{\sharp}}}  2^{\frac{nq}{2}} \lesssim_{q} C^q,
Since
\eqref{ineq:necstep1}
coincides with
\eqref{ineq:55}, 
by Theorem \ref{lemma:necesnew}, inequality \eqref{ineq:pittgen++} implies \eqref{ineq:necstep1}. To obtain
\begin{equation*}
%\label{ineq:necstep22}
\left(\sum_{n \in I} \left(\int_{  x_n }^{ x_{n+1}} u_0^{q^{\sharp}}(y) y^{d-1} dy \right)^{\frac{q}{q^{\sharp}}} 2^{\frac{qn}{2}}\right)^{\frac{1}{q}} \lesssim_{q} C,
\end{equation*}
which is equivalent to
\eqref{ineq:necstep2},
 we proceed in the following way for each $n\in I$: \newline
Define 
\begin{equation*}A(r_0,r_1):=\{ z \in \mathbb{R}^d: r_0 < |z| \leq r_1\} \end{equation*}
\begin{comment}
Next, observe that if inequality \eqref{ineq:pittgen} holds, then so does the inequality \begin{equation}\label{vspom}\norm{\widehat{f}u \mathbbm{1}_{A(x_n/2,x_{n+1})}}_q \leq C \norm{f v \mathbbm{1}^{-1}_{A(y_{n}/2,y_{n-1})}}_{\infty}. \end{equation} 
\end{comment}
and consider the obvious  inequality
\begin{equation}\label{vspom}\norm{\widehat{f}u \mathbbm{1}_{A(x_n/2,x_{n+1})}}_q \leq D \norm{f v \mathbbm{1}^{-1}_{A(y_{n}/2,y_{n-1})}}_{\infty},\end{equation} where
\begin{equation}
    D:=\sup_ {\norm{g v \mathbbm{1}^{-1}_{A(y_{n}/2,y_{n-1})}}_{\infty}=1} \norm{\widehat{g}u \mathbbm{1}_{A(x_n/2,x_{n+1})}}_q.
\end{equation}
We note that the function $u \mathbbm{1}_{A(x_n/2,x_{n+1})}$ is radial non-increasing for $|x| \geq \frac{x_n}{2}$ and $v \mathbbm{1}^{-1}_{A(y_{n}/2,y_{n-1})}$ is radial non-decreasing for $|y| \geq \frac{1}{2x_n}= \frac{y_n}{2}$. Thus, by item (ii) in Lemma \ref{prop:nopacific} with $t=x_n$, 
 inequality  \eqref{vspom}
implies
$$\left(\int_{  x_n }^{ x_{n+1}} u_0^{q^{\sharp}}(\xi)\xi^{d-1} d \xi \right)^{\frac{1}{q^{\sharp}}} \left(\int_{  y_n }^{  y_{n-1}} v_0^{-2}(y)y^{d-1} d y \right)^{\frac{1}{2}}\lesssim_q D.$$
Hence there exists a function $f_n$ supported on $A(y_{n}/2 ,y_{n-1})$ such that $\norm{f_n v}_{\infty}=1$ and 

\begin{equation}     \label{vspom1}
\begin{split}%\nonumber
    \left(\int_{  x_n }^{ x_{n+1}} u_0^{q^{\sharp}}(\xi)\xi^{d-1} d \xi \right)^{\frac{1}{q^{\sharp}}} 2^{\frac{n}{2}}&\approx \left(\int_{  x_n }^{ x_{n+1}} u_0^{q^{\sharp}}(\xi)\xi^{d-1} d \xi \right)^{\frac{1}{q^{\sharp}}} \left(\int_{  y_n }^{  y_{n-1}} v_0^{-2}(y)y^{d-1} d y \right)^{\frac{1}{2}}\\
&\lesssim_{q} \norm{\widehat{f_n}u \mathbbm{1}_{A(x_{n}/2,x_{n+1})}}_q.
\end{split}\end{equation}

For a fixed $M\in \mathbb{N}$, define %which will later tend to infinity, 

$$f(x)=\sum_{\substack{n\in I \\
   |n|\le M }} \varepsilon_n f_n (x),\qquad \varepsilon_n=\pm 1.  %\mathbbm{1}(n)_{[-N,N]},
$$ 
We claim  that 
%$\sum_{n \in I} \mathbbm{1}_{A(y_{n}/2,y_{n-1})} \leq 2$ or, equivalently,
 for each point $x$ there are at most two different $n\in I$ for which  $f_n(x)\neq 0$.  Indeed, if $n\in I$, $f_n$ is supported on $A(y_n/2, y_{n-1})$ and $y_n > 2 y_{n+1}$. Thus, 
 for $k\geq 2$,
 if $n+k\in I$, the support of $f_{n+k}$  is contained in \begin{equation*}A(0, y_{n+k-1}) \subset A(0, y_{n+1}) ,\end{equation*} and $A(0, y_{n+1})$
is disjoint from $A(y_n/2, y_{n-1})$. Hence, we deduce that $$\norm{fv}_\infty  \leq 2.$$
By Khinchin's inequality \eqref{khi}, there exists a sequence $\varepsilon_n=\pm 1$ such that
\begin{equation*}
    \norm{\widehat{f}u}_q 
\gtrsim_q \Big\|{ \Big(\sum_{\substack{n\in I \\
   |n|\le M}} |\widehat{f_n}|^2\Big)^{\frac{1}{2}} u}\Big\|_q. 
\end{equation*}
Thus, since inequality \eqref{ineq:pittgen++} holds, by letting $M\to \infty$ we deduce that
\begin{eqnarray*}\nonumber C &\gtrsim_{q}&  \norm{ \left(\sum_{n \in I}|\widehat{f_n}|^2\right)^{\frac{1}{2}} u}_q 
\geq  \norm{ \left(\sum_{n \in I} \mathbbm{1}_{A(x_n/2,x_{n+1})} |\widehat{f_n}|^2\right)^{\frac{1}{2}} u}_q
\\
\label{zv}
&\approx_{q}& \left(\sum_{n \in I} \norm{ \mathbbm{1}_{A(x_n/2,x_{n+1})} \widehat{f_n}u}_q^q\right)^{\frac{1}{q}} 
\\
&\gtrsim_{q}& 
 \left(\sum_{n \in I} \left(\int_{  x_n }^{ x_{n+1}} u_0^{q^{\sharp}}(\xi) \xi^{d-1} d\xi \right)^{\frac{q}{q^{\sharp}}} 2^{\frac{qn}{2}}\right)^{\frac{1}{q}},
\end{eqnarray*} 
where we have used the fact that \begin{equation}     \label{vspom2}\sum_{n \in I} \mathbbm{1}_{A(x_{n}/2,x_{n+1})}(x) \leq 2\quad \forall x\in \mathbb{R}^d\end{equation}
and estimate \eqref{vspom1}. To see \eqref{vspom2}, observe that if $n''> n'>n$ and $n'\in I$, then we have $x_{n+1} \leq  x_{n'} < x_{n'+1}/2 \leq  x_{n''}/2   $. This means that  $ A(x_{n}/2, x_{n+1})\cap A(x_{n''}/2, x_{n''+1})= \emptyset$.

Thus, inequality \eqref{ineq:necstep2} is true.

\textbf{Step 2.} We now use inequalities \eqref{ineq:necstep1} and \eqref{ineq:necstep2} in order to prove \eqref{eq:step1}. 

First, recall that  $n\in J$ if $y_n \leq 2 y_{n+1}$. Hence,
\begin{equation*}\int_{y_n^{d}/2^d }^{ y_n^d } v_*^{-2}  \geq \int_{y_{n+1}^{d}}^{ y_n^d } v_*^{-2} \approx 2^n.\end{equation*} Thus, since $v_*$ is non-decreasing, for $ y_n^d \geq x^{-1} \geq  y^d_{n+1} \geq  y_n^d /2^d $ we have
\begin{eqnarray*} 
x\Big( \int_0 ^{x^{-1}} v_*^{-1} \Big)^2 
&\approx&  
y_n^{-d}\Big( \int_0 ^{y_n^d} v_*^{-1} \Big)^2 
\gtrsim
y_n^{d}v_*^{-2}(y_n^d/2^d)
\\&\geq&
\int_{ y_n^d/2^d}^{ y_n^d} v_*^{-2} 
\gtrsim 
2^n \approx \int_{y^d_{n+1}}^\infty v_*^{-2} \geq \int_{1/x}^\infty v_*^{-2}.
\end{eqnarray*}
In other words, there exists  $K>0$ such that
$$x\Big( \int_0 ^{x^{-1}} v_*^{-1} \Big)^2 
\geq K
 \int_{1/x}^\infty v_*^{-2},
\quad x_n^d \leq x \leq  x^d_{n+1}, \quad n\in J.
$$
We say that $x\in Z$ if the last inequality holds.
Note that, by the previous observation, \begin{equation}
\label{prop:JZ}
    \bigcup_{n \in J} [ x_n^d,x_{n+1}^d] \subset Z.
\end{equation}

Second, recall that $F\prec 1/v$, that is,
$$ \int_0 ^x F^{*,2} \lesssim \int_0 ^x \Big(\int_0 ^{1/t} v_*^{-1}\Big)^2 dt \approx x \Big(\int_0 ^{1/x} v_*^{-1} \Big)^2 + \int_{1/x}^\infty v_*^{-2},$$ and that for $x\in Z$ we have
$$  \int_0 ^x \Big(\int_0 ^{1/t} v_*^{-1}\Big)^2 dt\approx x \Big(\int_0 ^{1/x} v_*^{-1} \Big)^2.$$ 
Thus, for $x\in Z$,
\begin{equation*}x F ^{*,2} (x)\leq \int_0 ^x F^{*,2} \lesssim x\Big( \int_0 ^{1/x} v_*^{-1} \Big)^2,\end{equation*} that is,
\begin{equation*} F^* (x)  \lesssim \int_0 ^{1/x} v_*^{-1}, \quad x \in Z .\end{equation*} Hence, inequality \eqref{ineq:necstep1} implies that
$$\norm{F^* u^* \mathbbm{1}_Z}_q \lesssim _{q} C.$$ 
 What remains to show is that  \eqref{ineq:necstep2} and \eqref{prop:JZ} yield 
 \begin{equation}
 \label{eq:almost}
     \norm{F^* u^* \mathbbm{1}_{Z^c}}_q \lesssim _{q} C.
 \end{equation} \newline
 For any $x>0$, define $x':= \sup \left([0,x] \cap Z^c\right)\leq x $. Observe that, by definition of $Z$,
\begin{equation*} 
\int_0 ^x \mathbbm{1}_{Z^c} F^{*,2} \leq \int_0 ^{ x'}  F^{*,2} \lesssim\int_0 ^{ x'} \left(\int_0 ^{1/t} v_*^{-1} \right)^2 dt\lesssim \int_{1/x'} ^\infty v_*^{-2}  \lesssim \int_{1/x}^\infty v_*^{-2}.\end{equation*}  In particular, for $x=x^d_{n+1}$,

\begin{equation}
\label{ineq:obs}\int_{x^d_n}^{ x^d_{n+1}} \mathbbm{1}_{Z^c} |F ^* |^2 \lesssim \int_{y_{n+1}^d}^\infty v_*^{-2} \approx 2^n.
%, \quad n\in I.
\end{equation}
Therefore, by  \eqref{prop:JZ}, Hölder's inequality and inequality \eqref{ineq:obs}, we deduce that
 \begin{eqnarray*}  2^{-q}\norm{F^* u^* \mathbbm{1}_{Z^c}}_q^q&=&
\Big(\sum_{n\in I} +
\sum_{n\in J} \Big)
\int_{x_n^d}^{x_{n+1}^d} \mathbbm{1}_{Z^c} |u^*F ^* |^q
=
 \sum_{n\in I} \int_{x_n^d}^{x_{n+1}^d} \mathbbm{1}_{Z^c} |u^*F ^* |^q \\&\leq& \sum_{n\in I}  \left(\int_{ x_n^d}^{ x_{n+1}^d}  u^{*,q^{\sharp}} \right)^{\frac{q}{q^{\sharp}}} \left(\int_{ x_n^d}^{x_{n+1}^d} \mathbbm{1}_{Z^c} |F ^* |^2 \right) ^{\frac{q}{2}} 
\\&\lesssim_q& \sum_{n\in I} \left(\int_{ x_n^d}^{x_{n+1}^d}  u^{*,q^{\sharp}} \right)^{\frac{q}{q^{\sharp}}}  2^{\frac{qn}{2}} \lesssim_{q} C^q,
\end{eqnarray*}
where the last inequality follows from \eqref{ineq:necstep2}. Inequality \eqref{eq:almost} is thus proved, completing the proof.
\end{proof}

\begin{proof}[Proof of $(i) \implies (ii)$ in Theorem \ref{theorem:mainabstract}
for the general case] $ $\newline
    Assume that the inequality
    $ \norm{|\widehat{f}|^\beta}_X^{\frac1\beta} \leq C_1 \norm{ f}_Y$
    %    \eqref{ineq:pittXY}
     holds for any $f\in L^1$.

Let  $F\prec G$. 
Observe that, by right-admissibility of $Y$, for any $f\in L^1$, 
    $$\norm{|\widehat{f}|^{\beta}}_{X''}  ^{\frac1\beta} \leq \norm{|\widehat{f}|^{\beta}}_{X} ^{\frac1\beta} \leq C_1 \norm{f }_Y \leq 
    C_1\norm{f/G^\circ}_\infty \norm{G^\circ}_Y .$$
  Thus, for any $h \in X'$ with $\norm{h^\circ}_{X'}\leq 1$ we have,
  for any $f\in L ^1$,
  $$\int_{\mathbb{R}^d} h ^\circ |\widehat{f}|^{\beta} \leq \norm{h^\circ}_{X'} \norm{|\widehat{f}|^{\beta}}_{X''} \leq
  C_1^\beta\norm{f/G^ \circ}_\infty ^{\beta} \norm{G^\circ}_Y ^{\beta},
  $$
  that is, the inequality 
  \begin{equation*}
      \left(\int_{\mathbb{R}^d} h ^\circ |\widehat{f}|^{\beta} \right)^{\frac1\beta} \leq  (C_1 \norm{G^\circ}_Y)  \norm{f/G^ \circ}_\infty 
  \end{equation*} holds for any $f\in L^1$. Hence, applying Theorem \ref{theorem:mainabstract} 
in the particular case when $\||f|^\beta\|_X^\frac1\beta=\|fu\|_{q}$ with $q=\beta \leq 1,u^q=h^\circ$ and $
  \|f\|_Y=\|fv\|_{\infty}
  %Y=L^\infty(v)
  $ with $v=1/G^\circ$ and Corollary \ref{coro:method}, 
  we deduce that 
  \begin{equation*}
      \left(\int_{\mathbb{R}^d} h ^\circ F^{\circ,\beta} \right)^{\frac1\beta} \lesssim_\beta
      (C_1 \norm{G^\circ}_Y)
%      C_1 \norm{G^\circ}_Y 
\norm{G^\circ /G^ \circ}_\infty =  C_1 \norm{G^\circ}_Y.
  \end{equation*} The result follows by noting that, by left-admissibility of $X$ and the Hardy-Littlewood rearrangement inequality, we have
$$ \norm{F^{\circ,\beta}}_{X} = \norm{F^{\circ,\beta}}_{X''} = \sup _{\norm{h}_{X'}\leq 1} \int_{\mathbb{R}^d} |h F^{\circ,\beta}| \leq \sup _{\norm{h^\circ }_{X'}\leq 1} \int_{\mathbb{R}^d} h ^\circ F^{\circ,\beta}  \lesssim_\beta C_1^\beta \norm{G^\circ}_Y^{\beta}.$$
Corollary \ref{coro:method} completes the proof. 

\end{proof}

\section{Applications: Fourier inequalities in weighted Lebesgue spaces}
\label{section:finalremarks}
\subsection{
Solution to Problem B and discussion}

The main result of this subsection is the solution to Problem B for the Fourier transform on $\mathbb{R}^d$. Theorem \ref{theorem:mainh} characterizes, for the whole range of parameters $1\leq p \leq \infty$ and $0<q\leq \infty$,
% that is,  we find %complete
%      a complete characterization of 
 the radial non-increasing weights
 $u$ and $v^{-1}$ for which \eqref{ineq:pittgen} holds. The analogous results for Fourier series and coefficients 
 are discussed in Subsection 
 \ref{section5.2}.
% can be deduced from Theorem \ref{theorem:mainh} by means of Proposition \ref{rem2}.
%     
\begin{comment}
         This concludes the work started by several authors in 80s who obtain this  characterization for the case $p\le q$. The case $p> q$ was open up to now.
    \end{comment}
 
 %we characterize the weights $u$ and $v$ for which the inequality 
% $\norm{(F)^{\circ,\beta}}_X^{\frac1\beta}\leq C_3 \norm{G^\circ}_Y
% $
%  %\eqref{ineq:FGxy}
%   holds for any $F\prec G$, thus establishing the proof 
% of Theorem \ref{theorem:mainh}. The proof of Corollary \ref{coro:optiHL} follows by a small modification of the proof 
% of Theorem \ref{theorem:mainh}.
\begin{theorem}%[Fourier transform]
\label{theorem:mainh}
    Let $1\leq p \leq \infty$ and $0<q\leq \infty$. Assume that $u$ and $v^{-1}$ are radial non-increasing weights. Define     \begin{equation*}
     U(t)=\int_0 ^t u^{*,q},\quad V(t)= \int_0 ^t v_*^{p},\quad\mbox{and}\quad
     \xi(t) = U(t)+ t^{\frac{q}{2}} \left(\int_t ^\infty u^{*,q^{\sharp}} \right)^{\frac{q}{q^{\sharp}}}.\end{equation*}
     Then
     inequality \eqref{ineq:pittgen} holds if and only if
    \begin{enumerate}[label=(\roman*)]
    \item $p \leq q$ and  \begin{equation}\label{condition:11}
        %K\approx
        C_3:=\sup _{s>0} \left(\int_0^s u^{*,q} \right)^{\frac{1}{q}} \left(\int_0^{1/s} v_*^{-p' } \right)^{\frac{1}{p'}} <\infty; 
        \end{equation}
        \item $1\leq q<p$, $2\leq\max(q,p'),$ and  \begin{equation}\label{nec11}
      % K\approx
      C_4:= \left(
       \int_0^\infty u^{*,q}(s)
       \left(\int_{0}^{s} u^{*,q}\right)^{\frac{r}{p}}\left(\int_{0}^{1/s} v_*^{-p'}\right)^{\frac{r}{p'}}  ds \right)^{\frac{1}{r}}<\infty;
    \end{equation}
%   where $r^{-1}=q^{-1}-p^{-1};$

     \item $q<2< p<\infty$, %, $\max(q,p')< 2$, 
     \begin{equation}
         \int_1 ^\infty u^{*,q^{\sharp}} < \infty
     \end{equation} and  
     \begin{equation}
         \label{eq:monster}
C_5=C_4+C_6<\infty,
     \end{equation}
where %$\mathfrak{p}$
     \begin{eqnarray*}
C_6&=&\Bigg(\int_0^\infty u^{*,q}(t) U^{\frac{q^{\sharp}}{2}}(t) \xi^{-\frac{q^{\sharp}}{2}}(t) t^{-\frac{q}{2}} 
\\&\times&\left(\int_{t}^ \infty u^{*,q}(r) U^{\frac{q^{\sharp}}{2}}(r) \xi^{-\frac{q^{\sharp}}{2}}(r) r^{-\frac{q}{2}}   dr \right)^{\frac{r}{p}} \left( \int_{t^{-1}} ^\infty v_*^{-p^{\sharp}}(r) \left(\frac{V(r)}{r v_*^{p}(r)}\right)^{-\frac{p^{\sharp}}{2}} dr \right) ^{\frac{r}{p^{\sharp}}}dt\Bigg)^{\frac{1}{r}};
     \end{eqnarray*}
 \item $q<2< p=\infty$, %, $\max(q,p')< 2$, 
     \begin{equation}
         \int_1 ^\infty u^{*,q^{\sharp}} < \infty
     \end{equation} and  
     \begin{equation}
   \label{eq:pinfty}
   C_7:= \left(\int_0 ^\infty U^{\frac{q^{\sharp}}{2}}(t) u^{*,q}(t) \xi^{-\frac{q^{\sharp}}{2}}(t) t^{-\frac{q}{2}} \left(\int_0 ^t \left ( \int_0 ^{r^{-1}} v_*^{-1} \right)^2 dr \right)^{\frac{q}{2}}dt\right)^{\frac{1}{q}}<\infty;
\end{equation}

\item $q<1\leq p\leq 2$,

     \begin{equation}
         \int_1 ^\infty u^{*,q^{\sharp}} < \infty,
     \end{equation} and  
     \begin{equation}
         \label{eq:monster2}
C_8=C_4+C_{9}<\infty,
     \end{equation}

\end{enumerate}
    where
     \begin{eqnarray*}
C_{9}&=&\Bigg(\int_0^\infty u^{*,q}(t) U^{\frac{q^{\sharp}}{2}}(t) \xi^{-\frac{q^{\sharp}}{2}}(t) t^{-\frac{q}{2}} 
\\&\,&\qquad\quad\times\left(\int_{t}^ \infty u^{*,q}(r) U^{\frac{q^{\sharp}}{2}}(r) \xi^{-\frac{q^{\sharp}}{2}}(r) r^{-\frac{q}{2}}   dr \right)^{\frac{r}{p}} \Big(\sup_{1/t\leq y<\infty} \frac{y ^{\frac{r}{2}}}{V^{\frac{r}{p}}(y)}
\Big)dt\Bigg)^{\frac{1}{r}}.
     \end{eqnarray*}     
Moreover, the optimal constant $C$ in the % Pitt's
inequality $
 \big\|u\widehat{f}\,\big\|_{q}\leq C \norm{ f v}_p
 $
 satisfies
$$C\approx_{p,q} C_j, \qquad j=3,4,5,7,8,
$$
in items (i)-(v), respectively.
\end{theorem}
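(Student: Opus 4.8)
The plan is to derive Theorem~\ref{theorem:mainh} from Theorem~\ref{theorem:mainabstract}. Apply the latter with $\beta=\min(q,1)$ to $\norm{F}_X=\norm{Fu}_q$ — which, after passing to $|F|^q$ when $q<1$, is left-admissible because $u$ is radial non-increasing — and to $\norm{G}_Y=\norm{Gv}_p$, which is right-admissible because $v$ is radial non-decreasing; together with Corollary~\ref{coro:method} this makes the inequality $\norm{u\widehat f}_q\le C\norm{fv}_p$ equivalent, with constants comparable up to factors depending only on $p,q$, to \eqref{ineq:FGuv}, i.e.\ $\norm{F^\circ u}_q\lesssim\norm{G^\circ v}_p$ for every $F\prec G$. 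Since $F\prec G$ depends only on $G^*$, and since the Hardy--Littlewood inequality shows $\norm{F^\circ u}_q$ is largest, and $\norm{G^\circ v}_p$ smallest, within the respective rearrangement classes (as $u^q$ is radial non-increasing and $v^p$ radial non-decreasing), and all quantities are already expressed through one-dimensional rearrangements, \eqref{ineq:FGuv} becomes: for all non-increasing $g,\phi\ge0$ with $\int_0^x\phi^2\le\int_0^x\big(\int_0^{1/t}g\big)^2dt$,
\begin{equation*}
\Big(\int_0^\infty\phi^q\,u^{*,q}\Big)^{1/q}\lesssim C\,\norm{g\,v_*}_p.
\end{equation*}
In cases (i) and (ii) this is classical: sufficiency of \eqref{condition:11} and of \eqref{nec11} is the method of rearrangements of Benedetto--Heinig \cite{BHJFAA} (for $\max(q,p')\ge2$ the displayed inequality reduces to a single Hardy inequality, settled by Lemmas~\ref{theorem:cont hardy2}--\ref{theorem:cont hardy}), while necessity of \eqref{condition:11} is Proposition~\ref{lemma:case1} and necessity of \eqref{nec11} is Theorem~\ref{lemma:necesnew}.

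For the new cases $q<2$ (items (iii)--(v)) the optimisation over $\phi$ is no longer trivial. Put $\Phi(x):=\int_0^x\big(\int_0^{1/t}g\big)^2dt$, which is concave since its derivative is non-increasing; the admissible $\phi^2$ are exactly the non-increasing $h$ with $\int_0^xh\le\Phi(x)$, so one must compute $\sup\big\{\int_0^\infty h^{q/2}u^{*,q}\,:\,h\downarrow,\ \int_0^xh\le\Phi\big\}$. Reading the constraint as $\sup_x\frac{\nu(x)}{x}\int_0^xh\le1$ with $\nu(x)=x/\Phi(x)$, and noting that $1/\nu(t)=\Phi(t)/t$ and $\nu(t)/t=1/\Phi(t)$ are non-increasing, the reverse Hardy inequality of Theorem~\ref{theorem:krepela} applies with exponent $q/2$ and weight $u^{*,q}$ and evaluates the supremum, up to constants depending on $q$, as
\begin{equation*}
\Big(\int_0^\infty\Phi(t)^{q/2}\,t^{-q/2}\,\xi^{-q^{\sharp}/2}(t)\,U^{q^{\sharp}/2}(t)\,u^{*,q}(t)\,dt\Big)^{1/q},
\end{equation*}
with $\xi$ exactly the auxiliary function of Theorem~\ref{theorem:krepela}; a Hardy-type manipulation (as in Theorem~\ref{theorem:krepela}) shows this $\xi$ is comparable to $U(t)+t^{q/2}\big(\int_t^\infty u^{*,q^{\sharp}}\big)^{q/q^{\sharp}}$, and its finiteness forces $\int_1^\infty u^{*,q^{\sharp}}<\infty$ — when $\xi(t)=\infty$ the supremum is infinite and \eqref{ineq:pittgen} fails. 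This identifies $\norm{g}_{\widetilde X}$ and in particular gives Corollary~\ref{coro:optiHL}.

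It remains to characterise the resulting inequality $\big(\int_0^\infty(\int_0^t(\int_0^{1/r}g)^2dr)^{q/2}\,u^{*,q}(t)U^{q^{\sharp}/2}(t)\xi^{-q^{\sharp}/2}(t)t^{-q/2}\,dt\big)^{1/q}\lesssim C\norm{gv_*}_p$ over non-increasing $g$. This is a composition of Hardy operators acting on $g$: I would first linearise the monotonicity of $g$ via the reduction theorem for non-increasing functions (Theorem~\ref{theorem:gogatoriginal} with $b=v_*^{p}$ when $1\le p$, and Theorem~\ref{th:oinarov} when $q<p\le1$, where $q/2<1$ lies outside the Banach range), and then evaluate the nested Hardy operators with Lemmas~\ref{theorem:discretehardy}, \ref{theorem:cont hardy2} and \ref{theorem:cont hardy}. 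This yields, in case (iii) $q<2<p<\infty$, the condition $C_4+C_6<\infty$ (the $U(t)$-part of $\xi$ producing the classical Hardy piece $C_4$, the remainder producing $C_6$); in case (iv) $p=\infty$, with $\int_0^tv_*^p$ replaced by $\big(\int_0^{1/r}v_*^{-1}\big)^2$, the single condition $C_7<\infty$; and in case (v) $q<1\le p\le2$, where the $p\le2$ reduction produces the supremum $\sup_{y\ge1/t}y^{r/2}/V^{r/p}(y)$, the condition $C_4+C_9<\infty$. The endpoint $q=\infty$ or $p=1$ is separate and elementary — the inner supremum collapses and Hölder's inequality with an explicit extremiser gives $C=\norm{u}_q\norm{1/v}_{p'}$.

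The main obstacle is the regime $q<2$: the conceptual heart is recognising the inner optimisation over $\phi$ as a reverse Hardy inequality, so that Theorem~\ref{theorem:krepela} and its function $\xi$ enter; after that, the work is the long — and, for $q<1$, genuinely non-Banach — bookkeeping needed to reconcile the two forms of $\xi$ and to push the nested Hardy inequalities for monotone functions through the reduction theorems while tracking constants, which is what produces the intricate quantities $C_6$, $C_7$ and $C_9$.
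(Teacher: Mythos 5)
Your proposal is correct and follows essentially the same route as the paper's own proof: first pass to the equivalent $F\prec G$ formulation via Theorem \ref{theorem:mainabstract} and Corollary \ref{coro:method}; then, for $q<2$, recognise the inner optimisation over $\phi$ as a reverse Hardy inequality settled by Theorem \ref{theorem:krepela} (this is exactly the content of Corollary \ref{coro:optiHL}, items (i)--(ii)); finally, linearise the resulting inequality over non-increasing $g$ via the reduction theorems and evaluate with the weighted Hardy lemmas. The tools, their order, and the conceptual hinge (Theorem \ref{theorem:krepela} and its function $\xi$) are the ones the paper uses.

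Two small inaccuracies are worth correcting. First, you write that Theorem \ref{th:oinarov} applies ``when $q<p\le 1$''; the relevant hypothesis is $\mathfrak{p}\le 1$ with $\mathfrak{p}=p/2$, so the regime is $q<1\le p\le 2$, matching item (v). Second, and more substantively, the parenthetical claiming that ``the $U(t)$-part of $\xi$ produc[es] the classical Hardy piece $C_4$, the remainder producing $C_6$'' misdescribes the mechanism: since $\xi$ enters as $\xi^{-q^{\sharp}/2}$, decomposing $\xi$ additively does not decompose the weight. What actually produces the split is the decomposition of the nested Hardy integral on the left, namely
\begin{equation*}
\left(\int_0 ^t \left ( \int_0 ^{r^{-1}} g \right)^2 dr \right)^{\frac{q}{2}} \approx_q t^{\frac{q}{2}} \left(\int_0 ^{t^{-1}} g\right)^q + \left(g^{2}(t^{-1})t^{-1}+\int_{t^{-1}}^\infty g^{2}\right)^{\frac{q}{2}}.
\end{equation*}
The first summand, combined with the trivial pointwise bound $U^{q^{\sharp}/2}\xi^{-q^{\sharp}/2}\le 1$ (and with $\int_0^{t^{-1}}g\prec g$ for the necessity), yields $C_4$; the second summand is what survives the reduction theorems (Theorem \ref{theorem:gogatoriginal} for $p\ge 2$, Theorem \ref{th:oinarov} for $p\le 2$) and then Lemmas \ref{theorem:cont hardy2}--\ref{theorem:cont hardy}, producing $C_6$ in case (iii) and $C_9$ in case (v). With these corrections your sketch is a faithful account of the paper's argument.
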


\begin{remark}
 \begin{enumerate}[label=(\roman*)]
        \item  For general weights $u$ and $v$
  and for general operators $T$ of joint strong type
  $(1,\infty;2,2)$,
  the conditions in Theorem \ref{theorem:mainh} are sufficient for $\|uT(f)\|_q\le C \|vf\|_p$, in other words, $C\lesssim_{p,q} C_j,$ $j=3,4,5,7,8,
$ in the cases of items (i)-(v), respectively.

    \item In the case when $q=\infty$ or $p=1$, $C=
    \|u\|_q\|1/v\|_{p'}$, see Proposition \ref{theorem:degheinig}.
    
    \item   Note that by duality (see Lemma \ref{lemma:duality}), for $1\leq p,q \leq \infty$, the conditions in Theorem \ref{theorem:mainh} are equivalent to those obtained by exchanging $(u,v,p,q)  $ and $(v^{-1},u^{-1},q',p')$.

\end{enumerate}
   
\end{remark}

Our proof of Theorem \ref{theorem:mainh} is based on  
the following corollary of Theorem \ref{theorem:mainabstract}. Here, again $\widehat{f}$ stands for the Fourier transform.

%we first prove Corollary \ref{coro:optiHL} by combining Theorem \ref{theorem:mainabstract} with the results of Subsection \ref{subsec:hardy}. The proof of Theorem \ref{theorem:mainh} then follows from Corollary \ref{coro:optiHL} and the results of Subsection \ref{subsec:hardy}.

\begin{corollary}
\label{coro:optiHL}
    Let $u$ be radial non-increasing and $0<q<\infty$. Define     \begin{equation*}
     U(t)=\int_0 ^t u^{*,q}\quad\mbox{and}\quad
     \xi(t) = U(t)+ t^{\frac{q}{2}} \left(\int_t ^\infty u^{*,q^{\sharp}} \right)^{\frac{q}{q^{\sharp}}}.\end{equation*} Then, the largest right-admissible space $Y$  for which either
%    \begin{enumerate}[label=(\roman*)]
 %       \item 
 \begin{equation}
 \label{eq:optihl}
        \norm{u\widehat{f}\,}_q\leq \norm{f}_Y
            \end{equation}
  or, equivalently, for any $T$ of joint strong type $(1,\infty;2,2)$,
 %      \item 
 \begin{equation*}
        \norm{u T(f)}_q\leq \norm{f}_Y \mbox{ hold}
            \end{equation*}  
 %   \end{enumerate}
satisfies
\begin{enumerate}[label=(\roman*)]
    \item if $0<q<2$ and $\xi(t)<\infty,$  \begin{equation}
 \label{eq:kgoptimal}
\norm{f}_Y \approx_q   \left(\int_0 ^\infty U^{\frac{q^{\sharp}}{2}}(t) u^{*,q}(t) \xi^{-\frac{q^{\sharp}}{2}}(t) t^{-\frac{q}{2}} \left(\int_0 ^t \left ( \int_0 ^{r^{-1}} f^* \right)^2 dr \right)^{\frac{q}{2}}dt\right)^{\frac{1}{q}};
 %\lesssim \norm{G^*v_*}_p,
 \end{equation}
 \item if $0<q<2$ and $\xi(t)=\infty,$  \begin{equation}
\norm{f}_Y =\infty;
 %\lesssim \norm{G^*v_*}_p,
 \end{equation}
 \item if $2\leq q<\infty$, or if $q \geq 1$  and $\xi(t)\approx U(t),$
 \begin{equation}
 \label{eq:kgoptimal2}
\norm{f}_Y \approx_q   \left(\int_0 ^\infty u^{*,q}(t) \left( \int_0 ^{t^{-1}} f^*   \right)^{q} dt\right)^{\frac{1}{q}}.
 \end{equation}
\end{enumerate}

\end{corollary}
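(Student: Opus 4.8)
The plan is to identify the optimal space with the functional $\widetilde X$ of Corollary \ref{coro:mainbis} and then to evaluate it by reducing, via the $\prec$‑characterization of operators of joint strong type (Lemma \ref{lemma:kfunc}), to a one–variable optimization problem that is resolved by Krepela's reverse Hardy inequality (Theorem \ref{theorem:krepela}) when $q<2$ and by an elementary majorization argument when $q\ge 2$.

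First I would set $\beta:=\min(1,q)$ and take $X$ to be the left‑admissible functional with $\norm{|F|^\beta}_X^{1/\beta}=\norm{uF}_q$ (so $\norm{G}_X=\int|G|u^q$ for $q<1$ and $\norm{F}_X=\norm{uF}_q$ for $q\ge1$; both are left‑admissible since $u$ is radial non‑increasing). By Corollary \ref{coro:mainbis} and Theorem \ref{theorem:mainabstract}, the right‑admissible functional $\norm{f}_{\widetilde X}:=\sup_T\norm{uT(f)}_q$ (supremum over sublinear $T$ of joint strong type $(1,\infty;2,2)$) is the largest space, up to $\approx_q$, for which \eqref{eq:optihl} holds, the Fourier transform being one admissible $T$; moreover the $T$‑version of \eqref{eq:optihl} is equivalent to \eqref{eq:optihl} by Theorem \ref{theorem:mainabstract}. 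So the task reduces to computing $\norm{f}_{\widetilde X}$. Using Lemma \ref{lemma:kfunc} ($T(f)\prec K_0f$ for a universal $K_0$, and conversely every $F\prec f$ with $f\in L^1$ is a $T(f)$) together with the Hardy–Littlewood rearrangement inequality for the radial non‑increasing weight $u$, I would rewrite
\[
\norm{f}_{\widetilde X}^{\,q}\ \approx_q\ \sup_{F\prec f}\norm{u^*F^*}_q^{\,q}\ =\ \sup_{\phi}\int_0^\infty\phi^{q/2}(x)\,u^{*,q}(x)\,dx,
\]
where, writing $\Phi(r):=\int_0^{r^{-1}}f^*$ and $G(x):=\int_0^x\Phi^2=\int_0^x\bigl(\int_0^{r^{-1}}f^*\bigr)^2dr$, the supremum is over non‑increasing $\phi\ge0$ with $\int_0^x\phi\le G(x)$ for all $x$ (substitution $\phi:=F^{*,2}$). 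The key structural fact is that $G$ is non‑decreasing and \emph{concave}, because $G'=\Phi^2$ is non‑increasing.

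For $q\ge2$ the exponent $q/2\ge1$, so $t\mapsto t^{q/2}$ is convex with value $0$ at $0$. If $\phi$ is admissible then $\int_0^x\phi\le\int_0^x G'$ with both integrands non‑increasing, so the Hardy–Littlewood–Pólya majorization principle gives $\int_0^a\phi^{q/2}\le\int_0^a(G')^{q/2}$ for every $a$; writing the non‑increasing weight $u^{*,q}$ as an increasing limit of non‑negative multiples of characteristic functions of intervals $[0,a]$ and summing, I get $\int_0^\infty\phi^{q/2}u^{*,q}\le\int_0^\infty(G')^{q/2}u^{*,q}=\int_0^\infty\Phi^q u^{*,q}$. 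Since $\phi=\Phi^2$ is itself admissible, the supremum equals $\int_0^\infty u^{*,q}(t)\bigl(\int_0^{t^{-1}}f^*\bigr)^qdt$, which is \eqref{eq:kgoptimal2}. For $q<2$ I would put $\mathfrak q:=q/2\in(0,1)$, $w:=u^{*,q}$, $W:=U$ and $\nu(t):=t/G(t)$; then $1/\nu=G/t$ and $\nu/t=1/G$ are non‑increasing, so Theorem \ref{theorem:krepela} applies. The admissible set for $\phi$ is precisely $\{\phi\ge0\ \text{non‑incr}:\sup_{x>0}\tfrac{\nu(x)}{x}\int_0^x\phi\le1\}$, whence $\norm{f}_{\widetilde X}^{\,q}\approx_qK_1^{\mathfrak q}$, where $K_1$ is the best constant in $\bigl(\int_0^\infty\phi^{\mathfrak q}w\bigr)^{1/\mathfrak q}\le K_1\sup_{x>0}\tfrac{\nu(x)}{x}\int_0^x\phi$ over non‑increasing $\phi$. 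A short computation with the exponents $\mathfrak q/(1-\mathfrak q)=q^\sharp/2$ and $1/(1-\mathfrak q)=q^\sharp/q$, comparing $\int_t^\infty(U(s)/s)^{q^\sharp/q}ds$ with $U(t)^{q^\sharp/q}t^{1-q^\sharp/q}$ and with $\int_t^\infty u^{*,q^\sharp}$ via a shifted Hardy inequality, identifies the auxiliary function of Theorem \ref{theorem:krepela} with a constant multiple of $\xi$; in particular it is everywhere finite iff $\xi(t)<\infty$ for all $t$, i.e. iff $\int_1^\infty u^{*,q^\sharp}<\infty$. If $\xi\equiv\infty$ then $K_1=\infty$ for every $f\not\equiv0$, which is item (ii); if $\xi(t)<\infty$ everywhere, then $K_1\approx_qK_2$ with $K_2^{\mathfrak q}=\int_0^\infty\nu^{-\mathfrak q}(t)\xi^{-q^\sharp/2}(t)U^{q^\sharp/2}(t)w(t)\,dt=\int_0^\infty U^{q^\sharp/2}(t)\xi^{-q^\sharp/2}(t)u^{*,q}(t)t^{-q/2}G(t)^{q/2}\,dt$, which, on recalling the definition of $G$, is exactly \eqref{eq:kgoptimal}. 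When in addition $1\le q<2$ and $\xi\approx U$, \eqref{eq:kgoptimal} collapses to $\bigl(\int_0^\infty u^{*,q}(t)(G(t)/t)^{q/2}dt\bigr)^{1/q}$, and its equivalence with \eqref{eq:kgoptimal2} is the boundedness of the Hardy averaging operator $h\mapsto t^{-1}\int_0^th$ on $L^{q/2}(u^{*,q}\,dt)$ restricted to non‑increasing $h$ — the $B_{q/2}$‑type condition encoded by $\xi\approx U$ — together with the trivial bound $t^{-1}\int_0^t\Phi^2\ge\Phi^2(t)$ for the reverse inequality. This gives item (iii), and combined with Corollary \ref{coro:method} it also yields the stated equivalence with the $T$‑formulation.

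The hard part will be the bookkeeping identifying the auxiliary function of Theorem \ref{theorem:krepela} with $\xi$ (the exponent manipulations and the equivalence of the two forms of $\xi$) and, in the last sub‑case, the $B_{q/2}$‑type simplification that upgrades \eqref{eq:kgoptimal} to \eqref{eq:kgoptimal2}; everything else is routine once the reduction to $\sup_\phi\int\phi^{q/2}u^{*,q}$ is in place.
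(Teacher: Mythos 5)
Your approach mirrors the paper's own proof almost step for step: identify the optimal $Y$ with $\widetilde X$ via Corollary \ref{coro:mainbis}, reduce to the extremal problem $\sup_{F\prec f}\norm{u^*F^*}_q$ via Corollary \ref{coro:method} and Lemma \ref{lemma:kfunc}, then for $q\geq2$ run a convexity/majorization bound (the paper cites \cite[Theorem 4.7]{jodeit}, you invoke Hardy--Littlewood--P\'olya directly; these are the same computation phrased differently), and for $q<2$ recast the supremum as the sharp constant in a reverse Hardy inequality read off from Theorem \ref{theorem:krepela}, whose hypotheses you correctly verify by the concavity of $G$. Your exponent bookkeeping ($\mathfrak q/(1-\mathfrak q)=q^\sharp/2$, $1/(1-\mathfrak q)=q^\sharp/q$), the identification of Krepela's auxiliary function with $\xi$ via Lemma \ref{theorem:cont hardy2} and monotonicity, and the treatment of item (ii) all match the paper.

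There is, however, a real gap in item (iii) for $1\leq q<2$. You describe the passage from \eqref{eq:kgoptimal} to \eqref{eq:kgoptimal2} as driven by ``the $B_{q/2}$-type condition encoded by $\xi\approx U$,'' and explicitly flag it as the hard part without attempting it. But $\xi\approx U$ and the $B_{q/2}$-type condition are not literally the same: $\xi\approx U$ asserts $t^{q/2}\big(\int_t^\infty u^{*,q^\sharp}\big)^{q/q^\sharp}\lesssim U(t)$, while the condition governing the Hardy averaging bound (inequality \eqref{eq:Bp}) involves the different integral $\int_t^\infty u^{*,q}(x)\,x^{-q/2}\,dx$, and bridging the two is where the work lies. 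The paper closes this by a non-obvious iteration: using $\xi\approx U$, Fubini, monotonicity, and Lemma \ref{theorem:cont hardy2}, it proves by induction on $k$ that
\begin{equation*}
\Big(\int_1^\infty u^{*,q^\sharp}(t)\,\log^{k}(t+1)\,dt\Big)^{q/q^\sharp}\ \lesssim_{q,k}\ \int_0^1 u^{*,q}\ +\ \Big(\int_1^\infty u^{*,q^\sharp}\Big)^{q/q^\sharp},
\end{equation*}
and then applies H\"older with a sufficiently large $k$ to dominate $\int_1^\infty u^{*,q}(t)\,t^{-q/2}\,dt$; a scaling argument then gives \eqref{eq:Bp} for all $y$, after which \cite[Theorem 2.1]{sin} yields the collapse. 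Without this step your proof of item (iii) in the range $1\leq q<2$ is incomplete; it is the only place where your argument and the paper's differ in substance, and the only genuinely missing idea.
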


\subsection{Counterparts for Fourier series and coefficients}\label{section5.2}
   We now explain how 
to modify
Corollary \ref{coro:optiHL} in order to address   the cases of Fourier series and Fourier coefficients.
\begin{remark}
(i) If inequality \eqref{eq:optihl} is replaced by \begin{equation}
 \left(\int_{-\frac{1}{2}}^{\frac{1}{2}}  \left| u(x)\sum_{n=-\infty}^\infty f(n)e^{2 \pi i n x}\right|^q dx \right)^\frac{1}{p} \leq \norm{f}_Y,
\end{equation} where $f$ is a summable sequence, %from the proof of Corollary \ref{coro:optiHL} we see that 
Corollary \ref{coro:optiHL} holds true with
$u=u\mathbbm{1}_{[-\frac12,\frac12]}$
and
$\displaystyle\int_0^{r^{-1}} f^*:= \int_0^{r^{-1}}\sum_{n=0}^\infty f^{*}(n) \mathbbm{1}_{[n,n+1]}$.
\\(ii) If inequality \eqref{eq:optihl} is replaced by  \begin{equation}
    \left(\sum_{n=-\infty}^\infty  | u_n\widehat{f}(n)|^q\right)^\frac{1}{q} \leq \norm{f}_Y,
\end{equation} where $f$ is an integrable function defined on $[-\frac 12, \frac 12]$ and $\widehat{f}(n)$ are its Fourier coefficients, Corollary \ref{coro:optiHL} holds true with $u(t):=\widetilde{u}(t)$, where
for a sequence $w$ we define $\displaystyle\widetilde{w}(x) =\sum_{n=-\infty}^{\infty} w_n \mathbbm{1}_{[n-\frac{1}{2}, n+\frac{1}{2}]}(x)$.
\end{remark}

\iffalse
\begin{proof}[Proof of second, remove]
We have to proof that
     \begin{equation}
  \left(\sum_{n=0}^\infty F(n)^{\mathfrak{q}} w_n \right)^{\frac{1}{\mathfrak{q}}}
      \leq C_1 \sup_{x>0} \frac{ \nu (x) }{x} \int_0 ^x \sum_{n=0}^\infty F(n) \mathbbm{1}_{[n,n+1]},
  \end{equation} is equivalent to

   \begin{equation}
  \left(\sum_{n=0}^\infty \int_{n}^{n+1}F^{\mathfrak{q}} w_n \right)^{\frac{1}{\mathfrak{q}}}
      \leq C'_1 \sup_{x>0} \frac{ \nu (x) }{x} \int_0 ^x  F,
  \end{equation}
  where $$\frac{x}{\nu(x)}=\int_0^x \left(\int_0^{t^{-1}}  G \mathbbm{1}_{[0,1]}\right)^2.$$
  Clearly, $C'_1\geq C_1$. For the reverse inequality, note that for $x\in [0,1]
  $ $\frac{x}{\nu(x)} = \frac{x}{\nu(1)}$, thus 
  $$\sup_{x>0} \frac{ \nu (x) }{x} \int_0 ^x  F \approx \max\left(F(0) \nu(1), \sup_{x>1} \frac{ \nu (x) }{x} \int_0 ^x  F\right). $$ For any function $F$ we have
   \begin{equation}
  \left(\sum_{n=0}^\infty F(n)^{\mathfrak{q}} w_n \right)^{\frac{1}{\mathfrak{q}}}
      \leq C_1 \sup_{x>0} \frac{ \nu (x) }{x} \int_0 ^x \sum_{n=0}^\infty \left(\int_{n}^{n+1}F^q\right)^{\frac 1q} \mathbbm{1}_{[n,n+1]},
  \end{equation}
  $$\sup_{x>1} \frac{ \nu (x) }{x} \int_0 ^x \sum_{n=0}^\infty \left(\int_{n}^{n+1}F^q\right)^{\frac 1q} \mathbbm{1}_{[n,n+1]}\approx  \sup_{x>1} \frac{ \nu (x) }{x} \int_0 ^x F$$ and 
  $$\sup_{1>x>0} \frac{ \nu (x) }{x} \int_0 ^x \sum_{n=0}^\infty \left(\int_{n}^{n+1}F^q\right)^{\frac 1q} \mathbbm{1}_{[n,n+1]}\leq  F(0)\nu (1).$$
\end{proof}

%Here, for a sequence $w$, we define $\widetilde{w}(x) =\sum_{n=-\infty}^{\infty} w_n \mathbbm{1}_{[n-\frac{1}{2}, n+\frac{1}{2}]}(x)$.

\fi

Now we discuss analogues of Theorem \ref{theorem:mainh}  in the  discrete setting, i.e.,
 %Third, to obtain characterizations 
  characterizations of the even monotone weights $u$ and $v$ for which the inequalities
\begin{equation}
\label{eq:disc2}
 \left(\int_{-\frac{1}{2}}^{\frac{1}{2}}  \left| u(x)\sum_{n=-\infty}^\infty f(n)e^{2 \pi i n x}\right|^q dx \right)^\frac{1}{p} \leq C_{10} \left(\sum_{n=-\infty}^\infty  |v_n f(n)|^p\right)^\frac{1}{p} 
\end{equation}
and
\begin{equation}
\label{eq:disc1}
    \left(\sum_{n=-\infty}^\infty  | u_n\widehat{f}(n)|^q\right)^\frac{1}{q} \leq C_{11} \left(\int_{-\frac{1}{2}}^{\frac{1}{2}} |vf|^p \right)^\frac{1}{p}
\end{equation}
hold.
These estimates have been studied in  
\cite{oscar},
\cite{h}, \cite{jurkat}, \cite{rastegari}, \cite{stein}, and \cite[Chapter XII]{z}, 
as well as  in many other papers.
We claim that 
 % it is useful to 
 %We prove in Proposition \ref{rem2} that 
  these inequalities are equivalent to the corresponding “continuous” estimates, 
 namely,
%without having to rederive the results of Theorem \ref{theorem:mainh} in the discrete case, it is useful to note that they are equivalent to the "continuous" inequalities, whose characterizations can be read from Theorem \ref{theorem:mainh},

\begin{equation}
\label{eq:cont2}
 \left(\int_{-\infty}^{\infty} \mathbbm{1}_{[-\frac{1}{2},\frac{1}{2}]}|u\widehat{f}|^q \right)^\frac{1}{p} \leq C_{12}  \left(\int_{-\infty}^\infty |\widetilde{v} f| ^p\right)^\frac{1}{p} 
\end{equation}
and \begin{equation}
\label{eq:cont1}
    \left(\int_{-\infty}^\infty  | \widetilde{u}\widehat{f}|^q\right)^\frac{1}{q} \leq C_{13} \left(\int_{-\infty}^{\infty} \mathbbm{1}^{-1}_{[-\frac{1}{2},+\frac{1}{2}]}|vf|^p \right)^\frac{1}{p},
\end{equation} respectively.
 %respectively.
 The  characterizations of \eqref{eq:cont2} and 
 \eqref{eq:cont1}
 follow from Theorem 
 \ref{theorem:mainh}.

\begin{proposition}
    \label{rem2} We have $C_{10}\approx_{p,q} C_{12}$
and $C_{11}\approx_{p,q} C_{13}$.
\end{proposition}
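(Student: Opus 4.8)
\textbf{Proof plan for Proposition \ref{rem2}.}

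The plan is to prove the two equivalences $C_{10}\approx_{p,q}C_{12}$ and $C_{11}\approx_{p,q}C_{13}$ by showing that, for the purposes of these weighted inequalities, a monotone weight sequence and its associated step-function weight $\widetilde{w}$ are interchangeable, and that summation over $\mathbb{Z}$ can be replaced by integration over $\mathbb{R}$ up to constants depending only on $p$ and $q$. I will treat \eqref{eq:disc1}$\iff$\eqref{eq:cont1} in detail; the case \eqref{eq:disc2}$\iff$\eqref{eq:cont2} is entirely analogous (indeed dual, by Lemma \ref{lemma:duality}, which exchanges the two inequalities under $(u,v,p,q)\mapsto(v^{-1},u^{-1},q',p')$), so I would only indicate the changes.

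\textbf{Step 1: the right-hand sides agree.} For $f$ supported on $[-\tfrac12,\tfrac12]$ one has trivially $\int_{-1/2}^{1/2}|vf|^p=\int_{\mathbb{R}}\mathbbm{1}^{-1}_{[-1/2,1/2]}|vf|^p$, so the norms $\|fv\|_{L^p(-1/2,1/2)}$ and $\|f\widetilde v\|_{L^p(\mathbb{R})}$ with $\widetilde v=v$ on $(-1/2,1/2)$ literally coincide for the class of $f$ in question. Thus no work is needed on the right for \eqref{eq:disc1}$\iff$\eqref{eq:cont1}; for \eqref{eq:disc2}$\iff$\eqref{eq:cont2} the point is instead the comparison $\big(\sum_n|v_nf(n)|^p\big)^{1/p}\approx\big(\int_{\mathbb{R}}|\widetilde v f|^p\big)^{1/p}$ for $f=\sum_n f(n)\mathbbm{1}_{[n-1/2,n+1/2]}$, which is an identity since $\widetilde v$ is constant on each $[n-\tfrac12,n+\tfrac12]$.

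\textbf{Step 2: the left-hand sides agree.} Here the Fourier coefficients $\widehat f(n)=\int_{-1/2}^{1/2}f(x)e^{-2\pi i xn}\,dx$ are exactly the values at integers of the Fourier transform $\wh f$ of $f\mathbbm{1}_{[-1/2,1/2]}\in L^1(\mathbb{R})$. I need
\[
\Big(\sum_{n\in\mathbb{Z}}|u_n\widehat f(n)|^q\Big)^{1/q}\approx_{p,q}\Big(\int_{\mathbb{R}}|\widetilde u(\xi)\wh f(\xi)|^q\,d\xi\Big)^{1/q}.
\]
Since $\widetilde u$ is constant (${=}u_n$) on $[n-\tfrac12,n+\tfrac12]$, the right side equals $\big(\sum_n|u_n|^q\int_{n-1/2}^{n+1/2}|\wh f(\xi)|^q d\xi\big)^{1/q}$, so it suffices to compare $|\wh f(n)|^q$ with $\int_{n-1/2}^{n+1/2}|\wh f(\xi)|^q d\xi$ in a weighted-sum sense. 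The direction "$\lesssim$" (continuous controls discrete): on $[n-\tfrac12,n+\tfrac12]$ write $\wh f(\xi)=\wh f(n)+\int_n^\xi\wh{f}\,'$, where $\wh f\,'(\xi)=-2\pi i\,\widehat{xf}(\xi)$, and use that $xf\mathbbm{1}_{[-1/2,1/2]}\in L^1$ with $\|xf\mathbbm{1}_{[-1/2,1/2]}\|_1\le\tfrac12\|f\|_1$ to get $|\wh f(n)|\le\int_{n-1/2}^{n+1/2}|\wh f(\xi)|d\xi+\tfrac12\|f\|_1$; a cleaner route is to note $\wh f(n)=\int_{n-1/2}^{n+1/2}\wh f(\xi)\,d\xi+\int_{n-1/2}^{n+1/2}\big(\wh f(n)-\wh f(\xi)\big)d\xi$ and bound the error by a Bernstein-type estimate, but since $u$ is only monotone I would instead argue by a Plancherel--Polya / sampling inequality for functions whose Fourier transform is supported in a fixed interval. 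Concretely, $\wh f$ is the Fourier transform of a function supported in $[-\tfrac12,\tfrac12]$, hence is a band-limited function, and for such functions the Plancherel--Polya inequality gives, for every $0<q\le\infty$,
\[
\sum_{n\in\mathbb{Z}}|\wh f(n)|^q\approx_q\int_{\mathbb{R}}|\wh f(\xi)|^q\,d\xi,
\]
and more: for any non-negative weight sequence $(u_n)$ that is \emph{monotone} (so that $u_n\approx u_{n+1}$ fails only across $0$, but $u_n^q\approx\int_{n-1/2}^{n+1/2}\widetilde u^{\,q}$ trivially), combining with the doubling-free bound $|\wh f(n)|\lesssim\sup_{|\xi-n|\le1/2}|\wh f(\xi)|$ and $\int_{n-1/2}^{n+1/2}|\wh f(\xi)|^q d\xi\gtrsim_q|\wh f(n)|^q$ (again Plancherel--Polya, localized) yields the two-sided comparison with constants depending only on $q$. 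The monotonicity of $u$ is used only to pass from the pointwise weight $u_n$ to the step weight $\widetilde u$, and this is automatic.

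\textbf{Step 3: assembling the equivalences and invoking Theorem \ref{theorem:mainh}.} Combining Steps 1 and 2: \eqref{eq:disc1} holds with constant $C_{11}$ iff \eqref{eq:cont1} holds with constant $C_{13}$, and $C_{11}\approx_{p,q}C_{13}$; likewise \eqref{eq:disc2}$\iff$\eqref{eq:cont2} with $C_{10}\approx_{p,q}C_{12}$. Finally, \eqref{eq:cont1} and \eqref{eq:cont2} are inequalities of the form $\|u\wh f\|_q\lesssim\|fv\|_p$ with the radial monotone weights $\widetilde u$, $\mathbbm{1}_{[-1/2,1/2]}$ (resp.\ $\mathbbm{1}_{[-1/2,1/2]}$, $\widetilde v$), so their validity — and the size of the optimal constant — is governed by Theorem \ref{theorem:mainh}; this is the sense in which "the characterizations of \eqref{eq:cont2} and \eqref{eq:cont1} follow from Theorem \ref{theorem:mainh}". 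I would state this last sentence explicitly to close the proof.

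\textbf{Expected main obstacle.} The only non-formal point is the sampling comparison in Step 2, i.e.\ controlling $\sum_n u_n^q|\wh f(n)|^q$ by $\int_{\mathbb{R}}\widetilde u^{\,q}|\wh f|^q$ and conversely for \emph{all} $0<q\le\infty$ (including $q<1$) and for the full class of monotone weights, without any regularity on $f$ beyond $f\in L^1[-\tfrac12,\tfrac12]$. For $q\ge1$ this is the classical Plancherel--Polya inequality for band-limited functions applied on the unit lattice; for $q<1$ one needs the $q<1$ version, which holds for functions of exponential type by the standard reproducing-kernel/maximal-function argument. The step weight is harmless because $\widetilde u$ is constant on each lattice cell, so no Muckenhoupt-type condition on $u$ is required — the monotonicity hypothesis is used only implicitly, to ensure the weights appearing here are exactly the ones Theorem \ref{theorem:mainh} treats. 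I would either cite a reference for the weighted Plancherel--Polya inequality on the integers or include the short direct argument (split $\wh f=\wh f(n)+$ integral remainder on each cell and use $\|x f\mathbbm{1}_{[-1/2,1/2]}\|_1\le\tfrac12\|f\mathbbm{1}_{[-1/2,1/2]}\|_1$, iterating a Hardy-type bound), whichever is shorter.
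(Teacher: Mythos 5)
Your plan diverges significantly from the paper's proof and, as written, has a genuine gap in its central step.

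The paper handles each of the two equivalences by proving one direction through an elementary identity and the other through the $\prec$ machinery. For $C_{10}\lesssim C_{12}$ it embeds a sequence $f$ as the step function $\sum f(n)\mathbbm{1}_{[n-\frac12,n+\frac12]}$ and uses the identity $\wh{f}(\xi)=\tfrac{\sin\pi\xi}{\pi\xi}\sum f(n)e^{-2\pi i\xi n}$, together with the fact that $\tfrac{\sin\pi\xi}{\pi\xi}$ is bounded above and below on $[-\frac12,\frac12]$. For the reverse direction it does \emph{not} compare the two Fourier objects pointwise; instead it uses Theorem \ref{theorem:mainabstract} and Corollary \ref{coro:method} to reduce the discrete inequality \eqref{eq:disc2} to a statement about pairs $G\prec B$, and then establishes the majorization $\wh{f}\,\mathbbm{1}_{[-\frac12,\frac12]}\prec F$ (with $F$ the sequence of cell-averages of $|f|$) by passing through $g=f*\mathbbm{1}_{[-\frac12,\frac12]}$ and Lemma \ref{lemma:kfunc}. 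Your plan replaces all of this with a weighted local Plancherel--P\'olya comparison
\[
\sum_n u_n^q |g(n)|^q \approx_{q}\int_{\mathbb{R}}\widetilde{u}^{\,q}|g|^q
\]
for band-limited $g$ sampled on $\mathbb{Z}$. You have correctly identified this as the ``expected main obstacle,'' but it is in fact a gap rather than an obstacle to be deferred: the function $\wh f$ here has spectral support exactly $[-\frac12,\frac12]$, so sampling on $\mathbb{Z}$ is exactly critical (Nyquist), and the \emph{local} inequality $|g(n)|^q\lesssim\int_{n-1/2}^{n+1/2}|g|^q$ does not follow from the usual Plancherel--P\'olya/Bernstein toolbox at the critical rate -- the reproducing-kernel and maximal-function arguments you gesture at produce bounds on a strictly larger window, or require a strictly super-Nyquist sampling rate. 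Without the local two-sided comparison, a monotone (possibly rapidly growing) weight $u$ cannot simply be ``pulled out'' cell by cell, so the weighted equivalence is not reducible to the unweighted global sampling theorem. Moreover, your treatment of $\eqref{eq:disc2}\iff\eqref{eq:cont2}$ as ``dual'' is imprecise: Lemma \ref{lemma:duality} applies only in the Banach range $1\le p,q\le\infty$, whereas the proposition covers $0<q\le\infty$, and in this pair the roles of sequence and function on the \emph{input} side are different, so the reverse direction is not a formal dualization of the other case. The paper's route via Corollary \ref{coro:method} and the $\prec$ relation sidesteps all of these issues and is the intended mechanism; I recommend rebuilding the proof around that reduction rather than around sampling inequalities.
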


\begin{proof} We only show the equivalence $\eqref{eq:disc2} \iff \eqref{eq:cont2}$, the proof of $\eqref{eq:disc1} \iff \eqref{eq:cont1}$ is similar.

    The inequality  $C_{10}\lesssim_{p,q} C_{12}$ follows easily by defining $$f=\sum_{n\in \mathbb{Z}} f(n) \mathbbm{1}_{[n-\frac12,n+\frac12]}$$ and observing that $$\widehat{f}(\xi)= \frac{\sin{\pi \xi}}{\pi \xi}\sum_{n\in \mathbb{Z}} f(n)e^{-2\pi i \xi n}.$$ 
    
    We now obtain the reverse estimate. Assume  that $\eqref{eq:disc2}$ holds, then by Theorem \ref{theorem:mainabstract} and Corollary \ref{coro:method} we deduce that for any $G\prec B$ the inequality
    \begin{equation*}
         \left(\int_{-\frac{1}{2}}^{\frac{1}{2}}  \left| u G\right|^q \right)^\frac{1}{q} \lesssim_{p,q} C_{10}  \left(\sum_{n=-\infty}^\infty  |v_n B(n)|^p\right)^\frac{1}{p} 
    \end{equation*} holds. Thus, to prove inequality \eqref{eq:cont2} it suffices to show that
    \begin{equation}
    \label{eq:GB}
        \widehat{f}\, \mathbbm{1}_{[-\frac12,\frac 12]} \prec F,
    \end{equation}
    where $F=\sum_{n \in \mathbb{Z}} \mathbbm{1}_{[-\frac 12, \frac 12]}\left( \int_{n-\frac 12}^{n+\frac 12} |f| \right).$

    To see this, observe that by Lemma \ref{lemma:kfunc} applied to the function $g=f*\mathbbm{1}_{[-\frac 12,\frac 12]}$ we have that
for any $x>0$,
\begin{equation*}
    \int_0^ x(\widehat{g})^{*,2} \lesssim \int_\frac 1x^\infty  g^{**,2}.
\end{equation*}
    Inequality \eqref{eq:GB} will be proved if we show that $$ |\widehat{f}(\xi)| \mathbbm{1}_{[-\frac12,\frac 12]}(\xi)\lesssim |\widehat{g}(\xi)| \qquad \mbox{and} \qquad g^{**}\lesssim F^{**}.$$ The former estimate is a simple consequence of the formula $\widehat{g}(\xi)=\widehat{f}(\xi) \frac{\sin{\pi \xi}}{\pi \xi}$. To prove the latter inequality, we observe that $g(x)\leq F(x-\frac 12)+ F(x) +F(x+\frac 12)$, which implies the result.
    \end{proof}

\subsection{Proofs of Theorem \ref{theorem:mainh}
and Corollary \ref{coro:optiHL}
}

\begin{proof}[Proof of Corollary \ref{coro:optiHL}]
Taking into account Corollary \ref{coro:mainbis}  with $\norm{|F|^\beta}_X^\frac1\beta=\norm{F^*u^*}_q$, 
the optimal $Y$ is given by 
$\widetilde{X}$, where, by  Corollary \ref{coro:method}, 
 \begin{equation*}\norm{G}_{\widetilde{X}}= \sup_{F \prec G} \norm{F^*u^*}_q. \end{equation*} 
We now compute this supremum for different values of $q$.

 {\bf Item (iii) for $q\geq 2$.} %\newline
In this case  the technique to obtain the result is essentially known (see the survey \cite{BHJFAA}). Indeed, the trivial observation $\int_0 ^{t^{-1}}G^* \prec G^*$ yields
    $$\norm{G}_{\widetilde{X}} \gtrsim   \left(\int_0 ^\infty u^{*,q}(t) \left( \int_0 ^{t^{-1}} G^*   \right)^{q} dt\right)^{\frac{1}{q}}.$$ For the reverse inequality, note that a convexity argument \cite[Theorem 4.7]{jodeit} shows that if $F\prec G$, then for $q\geq 2$
    $$\int_0 ^x F^{*,q}  \lesssim_q \int_0 ^x \left(\int_0 ^{t^{-1}} G^* \right)^q dt.$$ Thus, for any $F\prec G$, integration by parts yields 
    $$\norm{F^* u^*}_q \lesssim _q  \left(\int_0 ^\infty u^{*,q}(t) \left( \int_0 ^{t^{-1}} G^*   \right)^{q} dt\right)^{\frac{1}{q}},$$ whence the result follows.

{\bf Items (i) and (ii).}
In order to estimate $\norm{G}_{\widetilde{X}}$ in the case $0<q<2$, we observe that
since $$F \prec \Big(\sup _{x>0} \frac{\int_0 ^x F^{*,2}}{\int_0 ^x \left(\int_0 ^{t^{-1}} G^*\right)^2 dt}\Big)^\frac12 G ,$$ 
$\norm{G}_{\widetilde{X}}$ is the smallest constant $K$ for which the inequality
 \begin{equation}
 \label{ineq:dev}
     \left(\int_0 ^\infty u^{*,q} F^{*,\frac{q}{2}} \right)^{\frac{2}{q}} \leq K %\norm{G}_{\widetilde{X}}^2 
     \sup _{x>0} \frac{\int_0 ^x F^{*}}{\int_0 ^x \left(\int_0 ^{t^{-1}} G^*\right)^2 dt}
 \end{equation}
 holds for any $F^*$.

 Applying Theorem \ref{theorem:krepela} with $$\mathfrak{q}=\frac{q}{2},\quad w=u^{*,q},\quad \nu(x) = \frac{x}{\int_0 ^x \left(\int_0 ^{t^{-1}} G^*\right)^2 dt}$$
 and noting that $\nu(x)$ is non-decreasing and $\nu(x)/x$ is non-increasing, we deduce that
 in order for inequality \eqref{ineq:dev} to hold with a finite constant it is necessary that $
 \xi(t)<\infty$ for any positive $t.$ 
 Furthermore,  in this case,
 \begin{equation}
 \label{eq:kg}
 \norm{G}_{\widetilde{X}} \approx_q   \left(\int_0 ^\infty U^{\frac{q^{\sharp}}{2}}(t) u^{*,q}(t) \xi^{-\frac{q^{\sharp}}{2}}(t) t^{-\frac{q}{2}} \left(\int_0 ^t \left ( \int_0 ^{r^{-1}} G^* \right)^2 dr \right)^{\frac{q}{2}}dt\right)^{\frac{1}{q}}
 %\lesssim \norm{G^*v_*}_p,
 \end{equation}
 with  $U(t)=\int_0 ^t u^{*,q}$
 and
\begin{equation*}\xi(t)=t^{\frac{q}{2}} \left(\int_t^\infty \Big(\frac{U(s)} {s}\Big)^{\frac{q^{\sharp}}{q}} ds\right)^{\frac{q}{q^{\sharp}}} \approx_q U(t) + t^{\frac{q}{2}} \Big(\int_t ^\infty u^{*,q^{\sharp}}\Big)^{\frac{q}{q^{\sharp}}},\end{equation*} where the last estimate follows from the Hardy's inequality given by  Lemma \ref{theorem:cont hardy2} and monotonicity. 
The proof of items (i) and (ii) is now complete.

{\bf Item (iii) for $1\leq q< 2$.}
We now show that if $\xi\approx U$, equation \eqref{eq:kg} can be simplified to 
  $$\norm{G}_{\widetilde{X}} \approx_q \left(\int_0 ^\infty u^{*,q}(t) \left( \int_0 ^{t^{-1}} G^*   \right)^{q} dt\right)^{\frac{1}{q}}.$$ 
As shown in \cite[p. 1233]{sin}, it suffices to prove    \begin{equation}
    \label{eq:Bp}
    y^{\frac{q}{2}} \int_y ^\infty u^{*,q} (x) x^{-\frac{q}{2}} dx \lesssim_q \int_0 ^\infty u^{*,q} \min(1, y/x)^q dx,\quad y>0,
\end{equation} see \cite[Theorem 2.1, condition (c)]{sin}.

 Observe that by a scaling argument it is enough to establish \eqref{eq:Bp} for $y=1$, 
 which, 
  by the hypothesis $\xi\approx U$,  is implied by  the inequality
  \begin{equation}
\label{ineq:xiU}
\int_1^\infty u^{*,q}(t) t^{-\frac{q}{2}} dt \lesssim_{q} \int_0^1 u^{*,q} + \left(\int_1^\infty u^{*,q^\sharp}\right)^\frac{q}{q^\sharp}.
\end{equation}
First, we claim that for any integer $k\geq 0$ 
\begin{equation}
\label{ineq:xiU2}
\left(\int_1^\infty u^{*,q^\sharp} (t)\log^{k}(t+1) dt\right)^\frac{q}{q^\sharp} \lesssim_{q,k} \int_0^1 u^{*,q}+ \left(\int_1^\infty u^{*,q^\sharp}  \right)^\frac{q}{q^\sharp}.
\end{equation} Indeed,  observe that since $\xi\approx U$ and by monotonicity
\begin{align*}
\int_1  ^\infty u^{*,q^\sharp} (t)\log (t+1)^{k+1}dt  &\approx_{k} \int_1  ^\infty t^{-1} \log(t+1)^{k} \left(\int_t ^\infty u^{*,q^\sharp}\right)  dt\\
    &\lesssim_q \int_1 ^\infty \frac{\log(t+1)^k}{t^{\frac{q^\sharp}{q}} } \left(\int_0 ^t u^{*,q} \right)^{\frac{q^\sharp}{q}}dt\\
 &\lesssim_{q,k} \left(\int_0 ^1 u^{*,q}\right)^{\frac{q^\sharp}{q}} + \int_1 ^\infty u^{*,q^\sharp} (t)\log(t+1)^kdt,
 \end{align*} where the last estimate follows from Hardy's inequality (Lemma \ref{theorem:cont hardy2}). Inequality \eqref{ineq:xiU2} follows by iterating the previous argument. Finally, by Hölder's inequality and \eqref{ineq:xiU2}, for a large enough $k$ we conclude that
 \begin{equation*}
     \int_1^\infty u^{*,q}(t) t^{-\frac{q}{2}} dt \lesssim_{q,k} \left(\int_1^\infty u^{*,q^\sharp} (t)\log^{k}(t+1) dt\right)^\frac{q}{q^\sharp} \lesssim_{q,k} \int_0^1 u^{*,q}+ \left(\int_1^\infty u^{*,q^\sharp}  \right)^\frac{q}{q^\sharp},
 \end{equation*} that is, \eqref{ineq:xiU} holds.
  
\end{proof}

\begin{proof}[Proof of Theorem \ref{theorem:mainh}]
Observe that, by Corollary \ref{coro:mainbis}, inequality \eqref{ineq:pittgen} holds if and only if for any $G$ \begin{equation}
 \label{constant K}\norm{G}_{\widetilde{X}} \lesssim \norm{G^*v_*}_p,\end{equation} where $\widetilde{X}$ is given in Corollary \ref{coro:optiHL}.
We study inequality \eqref{constant K} separately for each regime of $p$ and $q$. 
 
 {\bf Items (i) and (ii).} Item (i) and the estimate $C\lesssim_{p,q}C_4$ in item (ii) are well known, see \cite{BHJFAA}. The estimate $C\gtrsim_{p,q}C_4$  follows from Theorem \ref{lemma:necesnew}.

{\bf Item (iv).}
In light of Corollary \ref{coro:optiHL} (i), we see that inequality \eqref{constant K} holds for any $G$ if and only if $\xi(t)<\infty$ and 
$\|v^{-1}_*\|_{\widetilde{X}}\lesssim 1$, which are equivalent to both $\int_1 ^\infty u^{*,q^{\sharp}} < \infty$
  and
  $C_7\lesssim 1$.

{\bf Preliminaries for items (iii) and (v).} %Step 3.  The case $p<\infty$.}
First, by Hardy's inequality and monotonicity, we note that   
 \begin{equation*}\left(\int_0 ^t \Big ( \int_0 ^{r^{-1}} G^* \Big)^2 dr \right)^{\frac{q}{2}} \approx_q t^{\frac{q}{2}} \left(\int_0 ^{t^{-1}} G^*\right)^q + \left(G^{*,2}(t^{-1})t^{-1}+\int_{t^{-1}}^\infty G^{*,2}\right)^{\frac{q}{2}}.\end{equation*}
 Thus, in view of  Corollary \ref{coro:optiHL} (i), inequality
 \eqref{constant K} 
  is equivalent to the following three conditions:  \begin{equation}\label{xi}\xi(t)<\infty,\end{equation}
 \begin{equation}
 \label{eq:Kg1}
     \left(\int_0 ^\infty U^{\frac{q^{\sharp}}{2}}(t) u^{*,q}(t) \xi^{-\frac{q^{\sharp}}{2}}(t)  \Big(\int_0 ^{t^{-1}} G^*\Big)^q\right)^{\frac{1}{q}} \lesssim_q \norm{G^*v_*}_p
 \end{equation} 
 and, writing $G^*$ instead of $G^{*,2}$,\begin{equation}
 \label{eq:Kg2}
     \left(\int_0 ^\infty U^{\frac{q^{\sharp}}{2}}(t) u^{*,q}(t) \xi^{-\frac{q^{\sharp}}{2}}(t) t^{-\frac{q}{2}} \Big(G^{*}(t^{-1})t^{-1}+\int_{t^{-1}}^\infty G^{*}\Big)^{\frac{q}{2}}\right)^{\frac{2}{q} \cdot \frac12} \lesssim_q \norm{G^*v_*^2}^\frac12_{\frac p2}.
 \end{equation}

Second, we deal with inequality \eqref{eq:Kg1}. Since $\xi\gtrsim U,$  we see that \eqref{eq:Kg1} is implied by the simpler estimate\begin{equation}
 \label{eq:Kg11}
    \left(\int_0 ^\infty  u^{*,q}(t)  \Big(\int_0 ^{t^{-1}} G^*\Big)^q\right)^{\frac{1}{q}} \lesssim \norm{G^*v_*}_p,
 \end{equation} which, by Lemma \ref{theorem:cont hardy2}, is characterized by condition $C_4\lesssim 1$.
 
 Moreover, noting that trivially $\int_0 ^{t^{-1}}G^* \prec G^*$ 
 (or Corollary \ref{coro:FH}), we see that \eqref{eq:Kg11} is also necessary for inequality \eqref{constant K}. In other words,
  \eqref{constant K} is equivalent to the conditions
 \eqref{xi},
 \eqref{eq:Kg2}, and
 \eqref{eq:Kg11}.
 
Since \eqref{xi}  and \eqref{eq:Kg11} are equivalent to the conditions
  $\int_1 ^\infty u^{*,q^{\sharp}} < \infty$
  and
  $C_4\lesssim 1$, all that remains is to characterize inequality \eqref{eq:Kg2} for $p<\infty$.
 
% Hence, nothing is lost in this replacement. 
{\bf Item (iii).} %Step 3.1.  The case $0<q<2 < p<\infty$.}
%Second, we characterize inequality \eqref{eq:Kg2}. First we consider the case $p\geq 2$.
By Theorem \ref{theorem:gogatoriginal} with $b=v_*^p$, $\mathfrak{p}=\frac{p}{2}$, and noting that, by monotonicity, $\int_0 ^\infty v_*^p = \infty$, we see that, since

$$t^{-1} \int_{t^{-1}}^\infty  g + \int_{t^{-1}}^\infty \left(\int_s^\infty g \right) ds =\int_{t^{-1}}^\infty xg(x) dx,$$
inequality \eqref{eq:Kg2} holds for any non-increasing $G^*$ if and only if for any non-negative $g$ 
 \begin{equation*}
 \label{eq:Kg3}
     \left(\int_0 ^\infty U^{\frac{q^{\sharp}}{2}}(t) u^{*,q}(t) \xi^{-\frac{q^{\sharp}}{2}}(t) t^{-\frac{q}{2}} \Big(\int_{t^{-1}} ^\infty x g (x) dx\Big)^{\frac{q}{2}}dt\right)^{\frac{1}{q}} \lesssim \norm{g  v_*^{2-p} \int_0 ^x v_*^p}_{\frac p2}^\frac12,
 \end{equation*} or, equivalently,
  \begin{equation*}
      \left(\int_0 ^\infty U^{\frac{q^{\sharp}}{2}} (t^{-1})u^{*,q}(t^{-1}) \xi^{-\frac{q^{\sharp}}{2}}(t^{-1}) t^{\frac{q}{2}-2} \Big(\int_{t} ^\infty  g \Big)^{\frac{q}{2}}dt\right)^{\frac{1}{q}} \lesssim \norm{g   v_*^{2-p} \Big(\frac1x\int_0 ^x v_*^p\Big)}_{\frac p2}^\frac12.
 \end{equation*}
 By Lemma \ref{theorem:cont hardy} with $\mathfrak{q}=\frac q2$ and $\mathfrak{p}=\frac p2>1$, the sharp constant in the previous inequality is equivalent to  $C_6$. This completes the proof of this case.

{\bf Item (v).}
%Step 3.2. The case $0<q<1\leq p\leq2$.}
% Finally, we consider the case $0<q<1 \leq p<2$.
By Theorem \ref{th:oinarov}, inequality \eqref{eq:Kg2} is equivalent to both 
 \begin{multline}
 \label{eq:Kgaug}
     \left(\int_0 ^\infty U^{\frac{q^{\sharp}}{2}} (t^{-1})u^{*,q}(t^{-1}) \xi^{-\frac{q^{\sharp}}{2}}(t^{-1}) t^{\frac{q}{2}-2} \left(\int_{t}^{\infty} g(y) (y-t)^{\frac{p}{2}}dy\right)^{\frac{q}{p}}dt\right)^{\frac{p}{q}\cdot\frac{1}{p}}\\ \lesssim \Big(\int_0 ^\infty g(x) \Big(\int_0 ^x v_* ^p\Big) dx\Big)^\frac1p
 \end{multline} for any non-negative $g$ and 
  \begin{equation}
 \label{eq:Kgaug2}
     \left(\int_0 ^\infty U^{\frac{q^{\sharp}}{2}}(t) u^{*,q}(t) \xi^{-\frac{q^{\sharp}}{2}}(t) t^{-\frac{q}{2}} \left(G^{*}(t^{-1})t^{-1}\right)^{\frac{q}{2}}dt\right)^{\frac{2}{q}\cdot\frac{1}{2}} \lesssim \norm{G^*v_*^2}_{\frac p2}^
     \frac{1}{2}
 \end{equation} for any non-increasing $G^*$. By a change of variables and letting $G^{*}(t)=\Big(\int_t^\infty g\Big)^\frac2p$, inequality \eqref{eq:Kgaug2} can be equivalently written as 
   \begin{multline}
 \label{eq:Kgaug23}
     \left(\int_0 ^\infty U^{\frac{q^{\sharp}}{2}} (t^{-1})u^{*,q}(t^{-1}) \xi^{-\frac{q^{\sharp}}{2}}(t^{-1}) t^{\frac{q}{2}-2} \left(t^{\frac{p}{2}}\int_t ^\infty g(y) dy\right)^{\frac{q}{p}}dt\right)^{\frac{p}{q}\cdot\frac{1}{p}}\\ \lesssim \Big(\int_0 ^\infty g(x) \left(\int_0 ^x v_*^p\right) dx
     \Big)^\frac{1}{p}
\end{multline}
    for any non-negative $g$. By putting together inequalities \eqref{eq:Kgaug} and \eqref{eq:Kgaug23}, we conclude that inequality \eqref{eq:Kg2} is equivalent to
  \begin{multline*}
     \left(\int_0 ^\infty U^{\frac{q^{\sharp}}{2}} (t^{-1})u^{*,q}(t^{-1}) \xi^{-\frac{q^{\sharp}}{2}}(t^{-1}) t^{\frac{q}{2}-2} \left(\int_{t}^{\infty} g(y) y^{\frac{p}{2}}\right)^{\frac{q}{p}}dt\right)^{\frac{p}{q}\cdot\frac{1}{p}}\\ \lesssim \Big(\int_0 ^\infty g(x) \left(\int_0 ^x v_*^p\right) dx
     \Big)^\frac{1}{p}
  \end{multline*}for any non-negative $g$. 
 
 Finally, by Lemma \ref{theorem:cont hardy},  the constant in the previous estimate is equivalent to $C_{9}$.
 The proof of Theorem \ref{theorem:mainh} is now complete.
 \end{proof}

\subsection{Extreme cases}

% \subsection{Degenerate cases}
 Here we 
 compute  the exact value of the constant $C$
 in \eqref{ineq:pittnomono}
 in some extreme cases of the parameters $p,q$.
\begin{remark}
\label{theorem:degheinig} 
(i) 
Let $1 \leq p \leq \infty $ and $0<q \leq \infty$.
Inequality \eqref{ineq:pittnomono} holds with $C=\norm{u}_q\norm{v^{-1}}_{p'}$. Moreover, if either $q=\infty$ or $p=1$, this constant is sharp.
See, e.g. \cite{sin}.
   
   (ii) We note that in the case $p=1$ both descriptions of the constant $C$, the one given by    Theorem \ref{theorem:mainh} (i,v) and
   $C=\norm{u}_q\norm{v^{-1}}_{p'}$,
    are equivalent. In more detail, $C_3\approx_{q} \norm{u}_q\norm{v^{-1}}_{\infty}$
for $1\leq q$ and $C_8\approx_{q} \norm{u}_q\norm{v^{-1}}_{\infty}$ for $0<q<1$.
 Similarly, 
$C_3\approx_{p} \norm{u}_\infty\norm{v^{-1}}_{p'}$ for $q=\infty.$

\end{remark}
\begin{proof} (i)
To see that \eqref{ineq:pittnomono} holds with the aforementioned constant, observe that
\begin{equation*} \norm{\widehat{f}u}_q \leq \norm{\widehat{f}}_\infty \norm{u}_q \leq \norm{f}_1 \norm{u}_q \leq \norm{fv}_{p} \norm{u}_q\norm{v^{-1}}_{p'}.\end{equation*} 
   For the sharpness of the constant, if $q= \infty$ and \eqref{ineq:pittnomono} holds, then for any $\xi\in \mathbb{R}^d$ 
   \begin{equation*}
       u(\xi) \left| \int_{\mathbb{R}^d} f(x) e^{-2 \pi i \langle \xi ,x \rangle} dx\right| \leq C \norm{fv}_p
   \end{equation*} 
 and the result follows by taking $f(x)=v^{-p'}(x)e^{2 \pi i \langle \xi ,x \rangle}$. For $p=1$, if \eqref{ineq:pittnomono} holds,
 we let $f(x)=\frac{\mathbbm{1}_{|x|\leq 1}}{\mu (\{|x|\leq 1\})}$ and take $f_{a,b}(x)=b^{-d}f\left(\frac{x-a}{b}\right)$.
Then  $|\widehat{f_{a,b}}(x)|=|
\widehat{f_{0,1}}(bx)|$, $
|\widehat{f_{a,b}}(0)|=1,
$ and the results  follows by letting $b\to 0.$ Note that in the case $0<p<1$ the same argument shows that
inequality \eqref{ineq:pittnomono} does not hold unless
$u=0$ or $v=\infty$. 

(ii) The proof is straightforward.
\end{proof}

\section{Applications: Optimal Fourier inequalities}
\label{section:optimalineq}
In this section, we present two particularly meaningful applications of Corollary \ref{coro:optiHL}. 
First, we study Fourier estimates for Morrey-type spaces and, in particular, 
establish the optimal Fourier inequality \eqref{eq:expL} for the exponential Orlicz space. Second, we answer the question raised by Herz and Bochkarev regarding  the optimal Fourier inequality for $L^{2,p}(\mathbb{T})$ for $p>2$; see \eqref{eq:sharpboch}.

%In Subsection \ref{subsec:morrey} we establish, among other results, the optimal Fourier inequality \eqref{eq:expL} for the exponential Orlicz space. The main result of Subsection \ref{subsection:FS} is the obtention of the optimal inequality \eqref{eq:sharpboch} for the Fourier coefficients of functions in $L^{2,p}(\mathbb{T})$ for $p>2$. 

\subsection{Morrey-type spaces}
\label{subsec:morrey}
Let $0<q<\infty$ and $\phi$ be a non-negative function on $\mathbb{R}_+$. Define the Morrey-type space $M_{q,\phi}$ by $$\norm{F}_{M_{q,\phi}}:=\sup_{B_R}  \phi(R) \left(\int_{B_R} |F|^q\right)^\frac{1}{q},$$ where the supremum is taken over all balls in $\mathbb{R}^d$.
\begin{proposition}\label{morrey proposition}
    The largest right-admissible space $Y$ for which the inequality
    
    \begin{equation}
\norm{(\widehat{f}\,)^\circ }_{M_{q,\phi}} \leq \norm{f^\circ}_Y
    \end{equation}
    or, equivalently, 
    \begin{equation}
        \norm{\widehat{f}\,}_{M_{q,\phi}} \leq \norm{f}_Y,
    \end{equation} hold for any  $f\in L^1$, satisfies
    
     \begin{enumerate}[label=(\roman*)]
          
     \item if $0<q<2,$  \begin{equation}
\norm{f}_Y \approx_q \sup _{R>0}  \frac{\phi(R)}{R^{\frac{d}{2}}}  \left(\int_0 ^{R^d} \left(\int_0 ^t \left ( \int_0 ^{r^{-1}} f^* \right)^2 dr \right)^{\frac{q}{2}}dt\right)^{\frac{1}{q}};
 %\lesssim \norm{G^*v_*}_p,
 \end{equation}
 
 \item if $2\leq q<\infty,$
 \begin{equation}
\norm{f}_Y \approx_q  \sup _{R>0}  \phi(R) \left(\int_0 ^{R^d}  \left( \int_0 ^{t^{-1}} f^*   \right)^{q} dt\right)^{\frac{1}{q}}.
 \end{equation}
 \end{enumerate}
\end{proposition}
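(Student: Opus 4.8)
The plan is to derive Proposition~\ref{morrey proposition} directly from Corollary~\ref{coro:optiHL} by choosing the left-admissible functional $\|\cdot\|_X$ appropriately and then recognising the resulting $\widetilde{X}$. First I would observe that $M_{q,\phi}$ is itself of the form covered by the theory: since the supremum defining $\|F\|_{M_{q,\phi}}$ is attained, up to a dimensional constant, by centred balls (and by balls of the same radius as any given ball after a harmless translation), we have for radial non-increasing $F^\circ$ that $\|F^\circ\|_{M_{q,\phi}}\approx_d \sup_{R>0}\phi(R)\big(\int_0^{R^d}F^{*,q}\big)^{1/q}$. Thus setting $\|F\|_X:=\|F\|_{M_{q,\phi}}$ we get a left-admissible functional (property (i), $\|F\|_X=\|F\|_{X''}$, holds because $M_{q,\phi}$ is a genuine function norm on radial non-increasing functions, and property (ii), $\|h^\circ\|_{X'}\le\|h\|_{X'}$, follows from the Hardy--Littlewood rearrangement inequality applied to the associate functional). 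With $\beta=1$, Corollary~\ref{coro:optiHL} then tells us that the largest right-admissible $Y$ for which $\|\widehat f\|_{M_{q,\phi}}\le\|f\|_Y$ holds is $\widetilde X$, and the equivalence with the rearranged inequality $\|(\widehat f)^\circ\|_{M_{q,\phi}}\le\|f^\circ\|_Y$ is part of that corollary (via Corollary~\ref{coro:method}).

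Next I would compute $\widetilde X$ explicitly. By Corollary~\ref{coro:method}, $\|G\|_{\widetilde X}=\sup_{F\prec G}\|F^\circ\|_{M_{q,\phi}}$. Writing this out using the reduction of the previous paragraph and the definition of $\prec$, and using the ``worst case'' $F$ exactly as in the proof of Corollary~\ref{coro:optiHL}, namely $F\prec\big(\sup_{x>0}\int_0^xF^{*,2}\big/\int_0^x(\int_0^{t^{-1}}G^*)^2dt\big)^{1/2}G$, we reduce to: for each fixed $R>0$, find the smallest constant $K_R$ with
\begin{equation*}
\Big(\int_0^{R^d}F^{*,q}\Big)^{1/q}\le K_R\,\sup_{x>0}\frac{\int_0^xF^{*,2}}{\int_0^x\big(\int_0^{t^{-1}}G^*\big)^2\,dt},
\end{equation*}
and then $\|G\|_{\widetilde X}\approx\sup_R\phi(R)K_R$. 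Here I would split into the two cases of the theorem. For $2\le q<\infty$ the argument is the easy ``convexity'' one already used in item (iii) of Corollary~\ref{coro:optiHL}: the bound $\int_0^x F^{*,q}\lesssim_q\int_0^x(\int_0^{t^{-1}}G^*)^2dt\cdot$ (sup term)$^{q/2}$ is not quite what is needed, but rather one uses $\int_0^{t^{-1}}G^*\prec G^*$ to see $\|\int_0^{\cdot^{-1}}G^*\|_{M_{q,\phi}}\lesssim\|G\|_{\widetilde X}$ for the lower bound, and for the upper bound the convexity inequality $\int_0^xF^{*,q}\lesssim_q\int_0^x(\int_0^{t^{-1}}G^*)^qdt$ valid for $F\prec G$, $q\ge2$ (which is \cite[Theorem 4.7]{jodeit}), giving $\|F^\circ\|_{M_{q,\phi}}\lesssim_q\sup_R\phi(R)(\int_0^{R^d}(\int_0^{t^{-1}}G^*)^q\,dt)^{1/q}$. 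This yields item (ii).

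For $0<q<2$ the core is a local (truncated to $[0,R^d]$) version of Theorem~\ref{theorem:krepela}, the reverse Hardy inequality for monotone functions. Concretely, with $\mathfrak q=q/2$, $w=\mathbbm1_{[0,R^d]}$, and $\nu(x)=x/\int_0^x(\int_0^{t^{-1}}G^*)^2dt$ (which is non-decreasing with $\nu(x)/x$ non-increasing, exactly the hypothesis of the theorem), Theorem~\ref{theorem:krepela} gives $K_R\approx_q\big(\int_0^{R^d}\nu^{-q}(t)\xi_R^{-q/(2-q)}(t)W_R^{q/(2-q)}(t)\,dt\big)^{1/q}$ with $W_R(t)=\min(t,R^d)$ and the corresponding $\xi_R$; but since $w$ is a characteristic function, $W_R(t)=t$ on the region that matters and $\xi_R(t)\approx t^{q/2}$ there (the tail integral $\int_t^\infty s^{-1/(2-q)}W_R^{1/(2-q)}$ with $W_R$ bounded), so the weight collapses and one gets simply $K_R\approx_q(\int_0^{R^d}t^{-q/2}(\int_0^t(\int_0^{r^{-1}}G^*)^2dr)^{q/2}dt)^{1/q}\cdot$? — here I would be careful: actually $\nu^{-q}(t)=t^{-q}(\int_0^t(\int_0^{r^{-1}}G^*)^2dr)^q$, and combined with $\xi_R^{-q/(2-q)}W_R^{q/(2-q)}\approx t^{-q/2}\cdot t^{q/2}\cdot$(bounded) one obtains $K_R\approx_q R^{-dq/2}(\int_0^{R^d}(\int_0^t(\int_0^{r^{-1}}G^*)^2dr)^{q/2}dt)^{1/q}$ after using that $\nu(t)\approx\nu(R^d)\approx R^d/\int_0^{R^d}(\cdots)$ for the relevant scales — and then $\|G\|_{\widetilde X}\approx\sup_R\phi(R)R^{-d/2}(\int_0^{R^d}(\int_0^t(\int_0^{r^{-1}}G^*)^2dr)^{q/2}dt)^{1/q}$, which is item (i) with $G=f$.

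The main obstacle I anticipate is precisely this last computation: Theorem~\ref{theorem:krepela} as stated is a global ($\int_0^\infty$) statement, and to apply it with the truncating weight $w=\mathbbm1_{[0,R^d]}$ one must track how $W$ and $\xi$ degenerate and confirm the claimed simplification of the weight is uniform in $R$ (so that the final supremum over $R$ is clean). An alternative, possibly cleaner, route is to avoid invoking Theorem~\ref{theorem:krepela} with a truncated weight and instead deduce Proposition~\ref{morrey proposition} as a corollary of Corollary~\ref{coro:optiHL}(i) itself: note that $M_{q,\phi}=\sup_R$ of the one-parameter family of left-admissible functionals $\|F\|_{X_R}:=\phi(R)(\int_0^{R^d}F^{*,q})^{1/q}$, each of which is of the weighted-Lebesgue type $\|F^*u_R^*\|_q$ with $u_R^*=\phi(R)\mathbbm1_{[0,R^d]}$; apply Corollary~\ref{coro:optiHL}(i) to each $u_R$, compute $U_R(t)=\phi(R)^q\min(t,R^d)$ and $\xi_R(t)=U_R(t)+t^{q/2}(\int_t^\infty u_R^{*,q^\sharp})^{q/q^\sharp}\approx\phi(R)^q t^{q/2}R^{d(1-q/2)}$ for $t\le R^d$ and $\approx\phi(R)^qR^d$ for $t\ge R^d$, plug into \eqref{eq:kgoptimal}, simplify, and then take $\sup_R$. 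I would present the second route as the main line of proof since it reuses Corollary~\ref{coro:optiHL} verbatim and makes the $\sup_R$ structure transparent; the degeneracy bookkeeping for $\xi_R$ is then the only nontrivial step.
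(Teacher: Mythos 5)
Your second route --- applying Corollary~\ref{coro:optiHL} with $u_R=\phi(R)\mathbbm{1}_{B_R}$ (or, as the paper does, $u_R=\mathbbm{1}_{B_R}$ with $\phi(R)$ kept outside) and then taking the supremum over $R$ --- is precisely the paper's proof, and your $U_R,\xi_R$ bookkeeping is correct: for $t\leq R^d$ one has $\xi_R(t)\approx_q t^{q/2}R^{d(1-q/2)}$, which makes the kernel in \eqref{eq:kgoptimal} collapse to the constant $R^{-dq/2}$ on $[0,R^d]$, giving item (i), and item (ii) follows directly from \eqref{eq:kgoptimal2}. The one piece the paper makes explicit and you only gesture at in a parenthetical is the translation-invariance reduction: the Morrey norm involves a supremum over balls of arbitrary centre, and the paper's first display uses $|\widehat{f}(\,\cdot+\xi_0)|=|\widehat{f e^{2\pi i\langle\cdot,\xi_0\rangle}}|$ together with $\|f e^{2\pi i\langle\cdot,\xi_0\rangle}\|_Y\le\|f\|_Y$ (right-admissibility, property (i)) to swap suprema and reduce to centred balls; this is exactly what lets one conclude that the candidate $Y=\sup_R Y_R$ is not merely necessary but also sufficient for the unrearranged inequality $\|\widehat f\|_{M_{q,\phi}}\le\|f\|_Y$. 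Your first route, by contrast, contains a genuine gap: it asserts that $M_{q,\phi}$ is itself left-admissible, with property (ii) ($\|h^\circ\|_{X'}\le\|h\|_{X'}$) ``following from Hardy--Littlewood.'' This is not justified --- Morrey-type spaces are not rearrangement-invariant, and for a left-admissible $X$ the condition bears on the associate functional rather than on $X$, so Hardy--Littlewood by itself does not deliver it. The paper sidesteps this entirely by never treating $M_{q,\phi}$ as a single $X$: it works with the one-parameter family $L^q(\mathbbm{1}_{B_R})$ of honest left-admissible weighted Lebesgue norms and lets the $\sup_R$ ride along outside. So present the second route, but make the translation-invariance step an explicit line rather than an aside, and drop the left-admissibility claim for $M_{q,\phi}$.
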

\begin{proof}
   By the translation invariance of $|\widehat{f}\,|$  we deduce that 
    \begin{align*}
        \sup_{\norm{f}_Y=1} \norm{\widehat{f\,}}_{M_{q,\phi}}&= \sup_{\norm{f}_Y=1} \sup_{B_R}  \phi(R) \left(\int_{B_R} |\widehat{f}|^q\right)^\frac{1}{q} =  \sup_{{B_R}} \sup_{\norm{f}_Y=1}  \phi(R) \left(\int_{B_R} |\widehat{f}|^q\right)^\frac{1}{q}\\
        &=\sup_{R>0} \sup_{\norm{f}_Y=1}  \phi(R)\left(\int_{|\xi|\leq R} |\widehat{f}|^q  \right)^\frac{1}{q}.
    \end{align*}
 For any right-admissible $Y$, using   Theorem \ref{theorem:mainabstract} and Corollary  \ref{coro:method} for any fixed $R$, we deduce that 
    \begin{align*}
        \sup_{\norm{f}_Y=1}  \left(\int_{|\xi|\leq R} |\widehat{f}|^q  \right)^\frac{1}{q}
        \approx_q \sup_{\norm{f^\circ}_Y=1} \left(\int_{|\xi|\leq R} |\widehat{f}|^{\circ,q}  \right)^\frac{1}{q}.%\approx_q  \sup_{\norm{G}_Y=1} 
    %\sup_{R>0}\sup_{ F\prec G} \phi(R)\left(\int_0^{R^d} F^{*,q}  \right)^\frac{1}{q}
    % \approx_q
% \sup_{\norm{f^\circ}_Y=1}     \|f\|_{\widetilde{X}_R},
    \end{align*}
Finally, in view of Corollary \ref{coro:optiHL} with $u(x)=\mathbbm{1}_{[0,R]}(|x|)$, 
    \begin{enumerate}[label=(\roman*)]
    \item if $0<q<2,$  \begin{equation}
 \sup_{\norm{f}_Y=1}  \left(\int_{|\xi|\leq R} |\widehat{f}|^q  \right)^\frac{1}{q}\approx_q \sup_{\norm{f}_Y=1} R^{-\frac{d}{2}}  \left(\int_0 ^{R^d} \left(\int_0 ^t \left ( \int_0 ^{r^{-1}} f^* \right)^2 dr \right)^{\frac{q}{2}}dt\right)^{\frac{1}{q}};
 %\lesssim \norm{G^*v_*}_p,
 \end{equation}
 
 \item if $2\leq q<\infty,$
 \begin{equation}
  \sup_{\norm{f}_Y=1}  \left(\int_{|\xi|\leq R} |\widehat{f}|^q  \right)^\frac{1}{q}\approx_q \sup_{\norm{f}_Y=1}   \left(\int_0 ^{R^d}  \left( \int_0 ^{t^{-1}} f^*   \right)^{q} dt\right)^{\frac{1}{q}}.
 \end{equation}
\end{enumerate}
\end{proof}

\begin{remark}
\begin{enumerate}[label=(\roman*)]
  \item If  $\phi(R)= R^{-\lambda}$ with $0\leq \lambda \leq d/q$, then $M_{q,\phi}$ is the classical Morrey space, that is,
$M_{q,\phi}= M_q^\lambda
$, $0< q< \infty$.
By Proposition \ref{morrey proposition},
the following estimates are optimal among  all right-admissible spaces $Y$ for which 
 the inequality
$\norm{\widehat{f}\,}_{M_{q,\phi}} \lesssim \norm{f}_Y$ holds:
for
$0< \lambda < d/q$ and 
$\frac 1s=
\frac 1q- \frac \lambda d$,%we recover the known inequalities  
\begin{equation*}
\norm{\widehat{f}\,}_{M_{q,\phi}} \leq \norm{(\widehat{f}\,)^\circ }_{M_{q,\phi}} \approx_{q,s}  \norm{\widehat{f}\,}_
{s,\infty}
 \lesssim_s \norm{{f}}_
{s',\infty},\qquad s>2;
\end{equation*}
 $$\norm{\widehat{f}\,}_{M_{q,\phi}} 
 \lesssim\norm{\widehat{f}\,}_
{s,\infty}
 \lesssim\norm{{f}}_
{2},\qquad s=2.$$
If we allow for non-admissible spaces $Y$,
then it is possible to derive sharper inequalities 
for the Morrey spaces  by using non-rearrangement techniques as in \cite{Holland}. See also \cite{DEBERNARDIPINOS2024110522} for similar results for a wider class of Morrey type spaces.

\item 
Setting $\phi(R)= {R^{-d}(1+\log_+(1/R)})^{-1}$, we have
$$\norm{F^\circ}_{M_{1,\phi}}=\sup_{B_R} \frac{1}{R^d
(1+\log_+ \frac{1}{R})}
  \int_{B_R} F^\circ=:\norm{F}_{\exp L},$$ which is the exponential Orlicz space. 
Observing that
$$\qquad \quad\sup _{R>0}  \frac{1}{R^{\frac{3d}{2}} (1+\log_+ \frac{1}{R})}  \int_0 ^{R^d} \left(\int_0 ^t \Big ( \int_0 ^{r^{-1}} F^* \Big)^2 dr \right)^{\frac{1}{2}}dt\approx
\sup_{R>0} \frac{1}{
(1+\log_+R)
}   \int_0^R F^*,
$$
  we arrive at the optimal Fourier inequality for the exponential Orlicz space:
\begin{equation}
\label{eq:expL}
\norm{\widehat{F}\,}_{\exp L} \lesssim 
%\norm{{F^*}}_{M_{1,\psi}}==
\sup_{R>0} \frac{1}{
(1+\log_+R)
}   \int_0^R F^*.
%, \quad \psi(R)= {\log^{-1}_+R}.
 \end{equation}
%$\psi(R)= {\log^{-1}_+(1/R)}$, 
%in the one-dimensional case for simplicity, we have
Moreover,
since $$\norm{F}_{(\exp L)'}\approx \norm{F}_{L \log L}:=\int_0^\infty F^*(t)\ (1+\log_+(1/t)) dt,$$ we can deduce a  Hardy-type estimate (cf. \cite[Corollary 7.23, p. 342]{garcia} and 
\cite[Chapter XII, p. 158]{z}): 
\begin{equation}
\norm{\widehat{F}\,}_{\infty}+
\int_1^\infty(\widehat{F})^{*}(t)\frac{dt}{t}
\lesssim 
\norm{{F}}_{L \log L},
 \end{equation}
% where the norm of $Y$ is given by
% $$\norm{G}_{Y} = G^*(0) +\int_1^\infty G^*(t) \frac{dt}{t}, $$
 and the left-hand side norm  is optimal.

\begin{comment}
the smallest left-admissible space $X$ which contains the Fourier transform of the elements of $L\log L$ is the associate space of $Y$, where 
$$\norm{F}_Y \approx \sup _{R>0}  \frac{1}{R^{\frac{3}{2}}\log_+(1/R) }  \int_0 ^{R} \Big(\int_0 ^t \Big ( \int_0 ^{r^{-1}} F^* \Big)^2 dr \Big)^{\frac{1}{2}}dt.$$
\end{comment}

\begin{comment}
To the best of our knowledge,  an explicit expression of  $\norm{G}_{Y'}$ is not known.
We note that for each $G$ its norm in the associate space  $\norm{G}_{Y'}$ is the best constant $C_G:=\norm{G}_{Y'}$ in the inequality 
$$ 
\int_0 ^ \infty F^* G^* \leq C_G \sup _{R>0}  \frac{1}{R^{\frac{3}{2}}\log_+(1/R) } \int_0 ^{R} \Big(\int_0 ^t \Big ( \int_0 ^{r^{-1}} F^* \Big)^2 dr \Big)^{\frac{1}{2}}dt.
$$
\end{comment}

\item As the previous example shows, in light of Corollary \ref{coro:optiHL}, 
it is of interest to find the associates of the norms
$$\left(\int_0 ^\infty u^{*,q}(t) \left( \int_0 ^{t^{-1}} F^*   \right)^{q} dt\right)^{\frac{1}{q}}$$ and
$$  \left(\int_0 ^\infty U^{\frac{q^{\sharp}}{2}}(t) u^{*,q}(t) \xi^{-\frac{q^{\sharp}}{2}}(t) t^{-\frac{q}{2}} \left(\int_0 ^t \left ( \int_0 ^{r^{-1}} F^* \right)^2 dr \right)^{\frac{q}{2}}\right)^{\frac{1}{q}},$$ as these yield
the sharpest rearrangement-invariant inequalities of the type
$\norm{\widehat{f\,}}_X \lesssim \norm{fu^{-1}}_{q'}$ for a given $u$.

By \cite[Remark 6.3]{gogpick}, the associate of the former is given by 
$\norm{wf^{**}}_{q'}
<\infty$
with $w(t)=U^{-1/q}(1/t)t^{-1/q'}$. We have not been able to find an expression for the latter in the literature.
\end{enumerate}
\end{remark}

\begin{comment}

Unfortunately, we have not been able to find an explicit expression for this space, so we ask \newline
\textit{Question 1:} Find an expression for $\norm{G}_{Y'}$. Equivalently, for a given $G$ estimate the best constant $C_G$ in the inequality 
$$ \int_0 ^ \infty F^* G^* \leq C_G \sup _{R>0}  \frac{1}{\log_+(1/R) R^{\frac{3}{2}}}  \int_0 ^{R} \left(\int_0 ^t \left ( \int_0 ^{r^{-1}} F^* \right)^2 dr \right)^{\frac{1}{2}}
% =\norm{f}_{\Gamma_q(v)}:
=\norm{vF^{**}}_q.
$$

Similarly, in light of Corollary \ref{coro:optiHL}, it is interesting to study the associates of the quantities appearing in the corollary. The associate space of 
$$\left(\int_0 ^\infty u^{*,q}(t) \left( \int_0 ^{t^{-1}} F^*   \right)^{q} dt\right)^{\frac{1}{q}}
=\norm{vF^{**}}_q<\infty$$
with 
$v(t)=u^{*,q}(1/t)t^{q-2}$
is given by 
$\norm{wf^{**}}_{q'}
<\infty$
with $w(t)=U^{-1/q}(1/t)t^{-1/q'};$
see \cite[Remark 6.3]{Gogatishvili2006}.

For the quantity in item (i) we have not found any result in the literature. Thus, we pose a second question.

\textit{Question 2:} For a given $G$ and $q\geq 1$ estimate the best constant $C_G$ in the inequality 
$$ \int_0 ^ \infty F^* G^* \leq C_G \left(\int_0 ^\infty U^{\frac{q^{\sharp}}{2}}(t) u^{*,q}(t) \xi^{-\frac{q^{\sharp}}{2}}(t) t^{-\frac{q}{2}} \left(\int_0 ^t \left ( \int_0 ^{r^{-1}} F^* \right)^2 dr \right)^{\frac{q}{2}}\right)^{\frac{1}{q}}.$$

\end{comment}
\subsection{Lorentz spaces}

\label{subsection:FS}

Recall that, for $1\leq p,q\leq \infty$ with $p>1$, the Lorentz function and sequence spaces $L^{p,q}:= L^{p,q}(\mathbb{T})$ and $\ell^{p,q}:= L^{p,q}(\mathbb{N})$, respectively, are given by the norms $$\norm{f}_{p,q}:=\left(\int_0^1 (t^{\frac{1}{p}}f^{**}(t))^q \frac{dt}{t}\right)^\frac{1}{q}\approx_{p,q} \left(\int_0^1 (t^{\frac{1}{p}}f^{*}(t))^q \frac{dt}{t}\right)^\frac{1}{q},$$ 

$$\norm{a}_{p,q}:=\left(\sum_{n=1}^\infty  (n^{\frac{1}{p}}a_n^{**})^q \frac{1}{n} \right)^\frac{1}{q}\approx_{p,q} \left(\sum_{n=1}^\infty  (n^{\frac{1}{p}}a_n^{*})^q \frac{1}{n} \right)^\frac{1}{q}.$$
As we mentioned in the introduction, 
% It is a well-known consequence of the Hardy-Littlewood inequality that, for $1\leq q \leq \infty$ and $1<p<2$,
 if $f\in L^{p,q}$,
 $1<p<2$, $1\leq q \leq \infty$,
 then $(\widehat{f}(n))_{n\in \mathbb{Z}}\in \ell^{p',q}$, and, moreover, $\ell^{p',q}$ is the smallest rearrangement invariant space for which this is true. This follows from the fact that for any even decreasing sequence 
$a:=(a_n)_{n\in \mathbb{Z}},
$ the  function  $f(x)=\sum_{n\in \mathbb{Z}} a_n e^{2 \pi inx}$ satisfies  $\|f\|_{p,q}\approx _{p,q} \|a_n\|_{p',q}$
  provided that $1<p<\infty$,  see \cite{hunt}. 

When $L^{p,q}\subset L^2$,  the smallest r.i. sequence space which contains the Fourier coefficients of every function in $L^{p,q}$ is $\ell^2$ (in fact, more is true, see \cite{KKdL}). 

The corresponding question for $L^{2,q}$ with $q>2$ was raised by Bochkarev \cite{Bochkarev} in 1997,
%It is thus natural to consider the same question for $L^{2,q}$ with $q>2$. 
%The study of this problem was initiated by Bochkarev \cite{Bochkarev},
who proved 
\begin{theorem}
    Let $2<p\leq \infty$, then for any $f\in L^{2,p}(\mathbb{T})$
    \begin{equation}
         \sup_n \frac{1}{ \log^{\frac{1}{p^{\sharp}}}(n+1)}\left(\sum_{j=1}^n \widehat{f}(n)^{*,2 } \right)^{\frac{1}{2}}\lesssim_p \norm{f}_{2,p}.
    \end{equation}
\end{theorem}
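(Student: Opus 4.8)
The plan is to deduce this Bochkarev-type estimate directly from Corollary \ref{coro:optiHL}, applied in the form appropriate for Fourier coefficients (Remark after Corollary \ref{coro:optiHL}, part (ii)). First I would rewrite the left-hand side as a weighted norm of $\widehat{f}$: since $\big(\sum_{j=1}^n \widehat{f}(n)^{*,2}\big)^{1/2}= \big(\int_0^n (\widehat f\,)^{*,2}\big)^{1/2}$ up to the usual sequence/function identification, the quantity $\sup_n \log^{-1/p^{\sharp}}(n+1)\big(\sum_{j=1}^n \widehat f(n)^{*,2}\big)^{1/2}$ is precisely a Morrey-type norm $\big\|\,\widehat f\,\big\|_{M_{2,\phi}}$ on $\mathbb{Z}$ with $\phi(R)\approx R^{-1/2}\log^{-1/p^{\sharp}}(R+1)$, interpreting the ``balls'' as symmetric intervals $[-R,R]$. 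By the translation-invariance argument used in the proof of Proposition \ref{morrey proposition}, this supremum over intervals reduces to the supremum over the symmetric intervals $[-N,N]$, i.e.\ to controlling $\sup_N \log^{-1/p^{\sharp}}(N+1)\big(\sum_{j\le N}(\widehat f\,)^{*,2}(j)\big)^{1/2}$, which is the borderline weighted $L^2$-Fourier estimate $\norm{u\widehat f}_2$ with $u(\xi)=\mathbbm 1_{[-N,N]}(\xi)$ combined with a supremum over $N$ weighted by $\log^{-1/p^{\sharp}}(N+1)$.

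Next I would invoke Theorem \ref{theorem:mainabstract} and Corollary \ref{coro:method} (in the Fourier-coefficient form) to replace, for each fixed $N$, the inequality $\norm{u\widehat f}_2\le C\norm{f}_Y$ by the equivalent rearrangement inequality $\norm{u(\widehat f\,)^\circ}_2\lesssim\norm{f^\circ}_Y$; equivalently, apply Corollary \ref{coro:optiHL} with $q=2$ (the borderline of the case $0<q<2$, or more conveniently $q=2$ falls under item (iii) since $q\ge 2$, giving directly the clean form $\norm{f}_Y\approx \big(\int_0^{N}\big(\int_0^{t^{-1}}f^*\big)^2\,dt\big)^{1/2}$ with $u=\mathbbm 1_{[0,N]}$). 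Thus the sharp $Y$-norm controlling $\sup_N \log^{-1/p^{\sharp}}(N+1)\big(\int_0^N(\widehat f\,)^{*,2}\big)^{1/2}$ is, up to constants,
\[
\norm{f}_Y\approx_p \sup_{N\ge 1}\frac{1}{\log^{1/p^{\sharp}}(N+1)}\left(\int_0^N\Big(\int_0^{t^{-1}}f^*\Big)^2 dt\right)^{1/2}.
\]
It then remains to check the elementary embedding $L^{2,p}(\mathbb T)\subset Y$, i.e.\ that the right-hand side above is $\lesssim_p\norm{f}_{2,p}$. Writing $h(t):=\int_0^{t^{-1}}f^*(s)\,ds$, I would estimate $\int_0^N h^2(t)\,dt$ using the pointwise bound $h(t)\le \int_0^{t^{-1}}f^*$, Hölder applied with the weight $t^{-1/2}$ against the $L^{2,p}$-definition, and the standard fact that for $f\in L^{2,p}$ one has $\int_0^{t^{-1}}f^*\lesssim_p t^{-1/2}\log^{1/p'}(1/t\,)\,\|f\|_{2,p}$ near $0$ (and a bounded contribution away from $0$), which after integrating in $t$ over $[0,N]$ produces exactly the factor $\log^{2/p^{\sharp}}(N+1)=\log^{2/p'}(N+1)$ since $p^{\sharp}=p'$ for $p>2$. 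Combining these gives the claimed inequality.

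The main obstacle I anticipate is the careful bookkeeping of the logarithmic exponents: one must confirm that the weight $\phi(R)=R^{-1/2}\log^{-1/p^{\sharp}}(R+1)$ coming from Bochkarev's statement matches, after passing through Corollary \ref{coro:optiHL}, precisely the exponent produced by the $L^{2,p}$ embedding, and that $p^{\sharp}=\big(\tfrac12-\tfrac1p\big)^{-1}=p'$ in this range so that the two occurrences of $\log$ have reciprocal exponents $1/p^{\sharp}$ and $1/p'$ that cancel. A secondary technical point is justifying the reduction from arbitrary symmetric intervals to the intervals $[-N,N]$ and the sequence-to-function passage $\widehat f(n)\leftrightarrow \widehat f(n)\,\mathbbm{1}_{[n-\frac12,n+\frac12]}$ so that Corollary \ref{coro:optiHL} (ii) applies verbatim; this is routine given the framework but needs to be stated. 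Everything else — the translation-invariance trick, the application of Theorem \ref{theorem:mainabstract}, and the Hardy-type estimate for $h$ — is standard once the reduction is in place.
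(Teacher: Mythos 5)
Two observations, first about the paper and then about your argument.

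This particular statement is presented in the paper as Bochkarev's theorem (with a citation); the paper does not reprove it directly. Instead the paper proves the strictly stronger Theorem \ref{th:52}, giving the $\Theta_{2,p}$ estimate \eqref{eq:sharpboch}, from which Bochkarev's bound follows by the elementary fact that, for a non-decreasing sequence $B_n:=\big(\sum_{j\le n}\widehat f(j)^{*,2}\big)^{1/2}$ and $p>2$,
\[
\left(\sum_{n}\frac{B_n^p}{n\log^{p/2}(n+1)}\right)^{1/p}\gtrsim_p \sup_n\frac{B_n}{\log^{1/p^{\sharp}}(n+1)},
\]
since $\sum_{n\ge N}\tfrac{1}{n\log^{p/2}(n+1)}\approx\log^{1-p/2}(N+1)$. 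Your plan is a genuinely different route: rather than going through $\Theta_{2,p}$, you reinterpret the left-hand side as a Morrey-type norm $\|\widehat f\|_{M_{2,\phi}}$ (sup over balls, with $\phi(R)\approx\log^{-1/p^{\sharp}}(R+1)$ — the extra factor $R^{-1/2}$ in your first formula is a slip, but you drop it when you actually use $\phi$), invoke Corollary \ref{coro:optiHL}(iii) with $q=2$ and $u=\mathbbm 1_{[0,N]}$ to get for each $N$ the optimal $Y$-norm $\big(\int_0^N(\int_0^{t^{-1}}f^*)^2\,dt\big)^{1/2}$, and then check the embedding $L^{2,p}\hookrightarrow Y$. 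This is a cleaner path to the weaker (sup-form) statement, because it bypasses the Hardy-inequality bookkeeping the paper needs for the $\ell^p$-form of $\Theta_{2,p}$.

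However, your last step contains a real gap. First, $p^{\sharp}\neq p'$ for $p>2$: indeed $1/p^{\sharp}=\tfrac12-\tfrac1p$ while $1/p'=1-\tfrac1p$, so $p^{\sharp}=\tfrac{2p}{p-2}$ and $p'=\tfrac{p}{p-1}$, which never coincide. Second, the strategy of proving a pointwise bound $\int_0^{1/t}f^*\lesssim t^{-1/2}\log^{1/p'}(t)\,\|f\|_{2,p}$ and then integrating in $t$ cannot produce the correct log exponent. The sharp pointwise bound, by Hölder against $t^{1/2}f^*(t)\in L^p(dt/t)$, is $\int_0^{1/t}f^*\lesssim t^{-1/2}\|f\|_{2,p}$ with no logarithm at all; squaring and integrating this over $t\in[1,N]$ only gives $\log(N+1)$, i.e.\ a $\log^{1/2}$ loss, whereas you need $\log^{1/p^{\sharp}}$ with $1/p^{\sharp}=\tfrac12-\tfrac1p<\tfrac12$. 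Inserting a spurious extra logarithm in the pointwise estimate only makes things worse. The pointwise bound simply wastes information.

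The correct way to finish is to not go pointwise. Substitute $u=1/t$ in the crucial integral to rewrite (for $N\ge 1$, recalling $f^*$ is supported on $[0,1]$)
\[
\int_1^N\Big(\int_0^{1/t}f^*\Big)^2\,dt=\int_{1/N}^1 f^{**,2}(u)\,du
=\int_{1/N}^1\big(u^{1/2}f^{**}(u)\big)^2\,\frac{du}{u},
\]
and then apply Hölder with exponents $\tfrac p2$ and $\big(\tfrac p2\big)'=\tfrac{p}{p-2}$ against the measure $du/u$:
\[
\int_{1/N}^1\big(u^{1/2}f^{**}(u)\big)^2\,\frac{du}{u}
\le\Big(\int_{0}^1\big(u^{1/2}f^{**}(u)\big)^p\,\frac{du}{u}\Big)^{2/p}\Big(\int_{1/N}^1\frac{du}{u}\Big)^{1-2/p}
\lesssim_p\|f\|_{2,p}^2\,(\log N)^{1-2/p}.
\]
Since $1-\tfrac2p=\tfrac2{p^{\sharp}}$ for $p>2$, this is exactly $\|f\|_{2,p}^2\log^{2/p^{\sharp}}(N+1)$, and together with the trivial bound for $t\in[0,1]$ it closes the argument. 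So the structure of your proposal is sound and genuinely different from the paper's; the gap is in the final embedding estimate, and is fixed by doing Hölder on the integral directly (after the substitution $u=1/t$) rather than integrating a pointwise bound.
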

It is worth mentioning that 
 already in 1968 Herz \cite[Proposition 3.5]{herz} proved that $\|\widehat{f}(n)\|_{2,q}\lesssim \|{f}\|_{2,p}$ holds if and only if $p\le 2\le q.$
For further results in this direction see \cite{oscar}.

We answer this question and we show that 
 %The following result shows that 
 the sharp estimate for the Fourier coefficients of $L^{2,p}$ functions is given by
\begin{equation}
\label{eq:sharpboch}
\left(\sum_{n=1}^\infty \left(\sum_{j=1}^n \widehat{f}(n)^{*,2 } \right)^{\frac{p}{2}} \frac{1}{n \log^{\frac{p}{2}}(n+1)}\right)^{\frac{1}{p}}\lesssim_p \norm{f}_{{2,p}}, \quad 2<p\le\infty.
\end{equation}
More precisely, we obtain
\begin{theorem}
\label{th:52}
    Assume that $X$ is a rearrangement-invariant function space and let $2<p\leq\infty$. Then,   the following are equivalent:
    \begin{enumerate}[label=(\roman*)]
        \item for any  $f\in L^1$, the inequality \begin{equation}
    \label{ineq:2p1}
        \norm{\widehat{f}(n)}_X \lesssim \norm{|x|^{\frac{1}{p^{\sharp}}}f(x)}_{p}
    \end{equation}
        holds;
        \item for any $f\in L^1$, the inequality \begin{equation}
    \label{ineq:2p2}
         \norm{\widehat{f}(n)}_X \lesssim \norm{f}_{{2,p}}
    \end{equation} holds;
    \item
    for any sequence $b$, the inequality
    \begin{equation}    \label{ineq:2p3}
        \norm{b}_X \lesssim_p \norm{b}_{ {\Theta}_{2,p
        }}:= \left(\sum_{n=1}^\infty \left(\sum_{j=1}^n b_j^{*,2 } \right)^{\frac{p}{2}} \frac{1}{n \log^{\frac{p}{2}}(n+1)}\right)^{\frac{1}{p}}
    \end{equation}  holds. 
    \end{enumerate}
    In consequence, $\Theta_{2,p}$ is the smallest r.i. space which contains the Fourier coefficients of every function in $L^{2,p}$ for $p>2$.
\end{theorem}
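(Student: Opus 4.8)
\textbf{Proof proposal for Theorem \ref{th:52}.}

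The plan is to deduce everything from Corollary \ref{coro:optiHL} (in the version for Fourier coefficients described in Subsection \ref{section5.2}), applied with the weight that encodes the space $L^{2,p}$. First I would observe that inequality \eqref{ineq:2p1} is nothing but inequality \eqref{eq:optihl} with the \emph{continuous} weight $u(x)=|x|^{1/p^{\sharp}}$ and the right-hand side $\norm{\cdot}_Y$ replaced by a weighted $L^p$ norm; however, for the coefficient operator we must read the left-hand side as $\norm{\widetilde{u}\,\widehat{f}}_X$ with $\widetilde{u}(x)=\sum_n |n|^{1/p^{\sharp}}\mathbbm{1}_{[n-\frac12,n+\frac12]}(x)$, which is equivalent to $|x|^{1/p^{\sharp}}$ away from the origin. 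So the first step is a routine equivalence between $\norm{\widehat{f}(n)}_X$ as a sequence norm and $\norm{\widetilde{u}\,\widehat{f}}_X$ as a function norm, valid because $X$ is rearrangement-invariant. The equivalence $(i)\Leftrightarrow(ii)$ is then just the statement that $\norm{|x|^{1/p^{\sharp}}f(x)}_p\approx_p\norm{f}_{2,p}$ for \emph{radially decreasing} $f$ — indeed $\norm{f}_{2,p}\approx(\int_0^1 (t^{1/2}f^*(t))^p\,dt/t)^{1/p}=(\int_0^1 t^{p/p^{\sharp}-1}f^{*,p}(t)\,dt)^{1/p}$ since $\frac1p+\frac1{p^{\sharp}}=1-(\frac12-\frac1p)=\frac12+\frac1p-\frac1p$... more precisely $\frac{1}{p^{\sharp}}=\frac12-\frac1p$ so $\frac{p}{p^{\sharp}}-1=\frac p2-2$, matching the jacobian $t^{d-1}$ with $d=1$ after the change $x=t$; and since the functional $f\mapsto\norm{|x|^{1/p^{\sharp}}f}_p$ is \emph{not} rearrangement-invariant but its restriction to decreasing $f$ agrees with the rearrangement-invariant functional $\norm{\cdot}_{2,p}$, while $\norm{f}_{p,q}$-type norms on $\mathbb{T}$ only see $f^*$ on $(0,1)$, one checks the tails match because $\widetilde u$ is supported off a neighbourhood of $0$. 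This bookkeeping is the only slightly delicate point of this part.

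The heart of the argument is $(i)\Leftrightarrow(iii)$. Here I would apply Corollary \ref{coro:optiHL} with $q=p>2$, so that we are in case (iii): $2\le q<\infty$, and the optimal right-admissible $Y$ is
\begin{equation*}
\norm{f}_Y\approx_p\left(\int_0^\infty u^{*,p}(t)\Big(\int_0^{t^{-1}}f^*\Big)^p dt\right)^{1/p},
\end{equation*}
where now $u^*(t)=\widetilde u^*(t)\approx t^{1/p^{\sharp}}\mathbbm{1}_{[1,\infty)}(t)$ in the coefficient setting (the truncation at $t\ge1$ coming from $u=u\mathbbm{1}_{[-1/2,1/2]}$ in the frequency variable becoming a lower truncation after taking the decreasing rearrangement of the sequence weight, because the weight $|n|^{1/p^{\sharp}}$ grows). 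Then Corollary \ref{coro:mainbis} tells us that $\norm{\cdot}_Y\approx_p\norm{\cdot}_{\widetilde X}$ is precisely the smallest right-admissible — hence smallest r.i. — norm making \eqref{ineq:2p1} hold, and also that \eqref{ineq:2p1} for a general r.i. $X$ is equivalent to $\norm{b}_X\lesssim_p\norm{b}_{\widetilde X}$. So it only remains to compute $\norm{f}_{\widetilde X}$ and identify it with $\norm{\cdot}_{\Theta_{2,p}}$. Substituting $u^{*,p}(t)\approx t^{p/p^{\sharp}}\mathbbm{1}_{[1,\infty)}$ and switching to the sequence description $\int_0^{t^{-1}}f^*\approx\frac1n\sum_{j\le n}b_j^*$ for $t\approx 1/n$ (the dictionary from the Remark in Subsection \ref{section5.2}), one gets, after the change of variables $t=1/s$ and discretization,
\begin{equation*}
\norm{f}_{\widetilde X}\approx_p\left(\int_1^\infty t^{p/p^{\sharp}}\Big(\int_0^{t^{-1}}f^*\Big)^p dt\right)^{1/p}\approx_p\left(\int_0^1 s^{-p/p^{\sharp}-2}\Big(\int_0^s f^*\Big)^p ds\right)^{1/p}.
\end{equation*}
This is a weighted Hardy average of $f^*$, and the point is that this is \emph{not} yet $\Theta_{2,p}$: the latter involves $\sum_n n^{-1}\log^{-p/2}(n+1)(\sum_{j\le n}b_j^{*,2})^{p/2}$, i.e. an $\ell^{p/2}$-quantity of the \emph{squares}.

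The main obstacle, therefore, is the identity
\begin{equation}
\label{eq:theobstacle}
\left(\int_0^1 s^{-p/p^{\sharp}-2}\Big(\int_0^s f^*\Big)^p ds\right)^{1/p}\approx_p\left(\sum_{n=1}^\infty\Big(\sum_{j=1}^n b_j^{*,2}\Big)^{p/2}\frac{1}{n\log^{p/2}(n+1)}\right)^{1/p},
\end{equation}
for $f(x)=\sum b_n e^{2\pi inx}$ with $b$ decreasing, which rewrites the average $\int_0^s f^*$ in terms of the square sums $\sum_{j\le n}b_j^{*,2}$ and produces the logarithmic weight. To prove this I would split $\int_0^s f^*=\int_0^{1/N}f^*+\int_{1/N}^s f^*$ for $s\approx1/N$ and use: (a) for the first piece, $\int_0^{1/N}f^*\approx\frac1N\sum_{j\le N}b_j^*\le\frac1N(N\sum_{j\le N}b_j^{*,2})^{1/2}=N^{-1/2}(\sum_{j\le N}b_j^{*,2})^{1/2}$ by Cauchy–Schwarz, which is sharp for "flat" tails and is exactly what forces the $N^{-1/2}=t^{1/2}$, i.e. $t^{1/p^{\sharp}}$ after raising to the appropriate powers; (b) for the tail $\int_{1/N}^s f^*(u)\,du$ with $u\approx 1/m$, $m\le N$, one has $f^*(u)\approx b_m^*$-type bounds via the dictionary, and summing $\sum_{m\le N}m^{-1}b_m^{*}$ against the reverse-Hardy / Hölder in $\ell^{p/2}$ vs $\ell^{(p/2)'}$ produces the $\log$ weight: $\int_0^s u^{-p/p^{\sharp}-2}(\int_{1/N}^s f^*)^p$ summed over dyadic blocks telescopes, and the conjugate exponent $(p/2)'$ applied to the constant sequence on each block of length $\approx$ geometric gives $\log^{p/2}$. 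The converse bound $\gtrsim$ is obtained by testing \eqref{eq:theobstacle} (equivalently, testing \eqref{ineq:2p1}) on the explicit lacunary-type functions $f_N(x)=\sum_{j\le N}e^{2\pi i 2^j x}$ or on $b_j^*=\mathbbm{1}_{j\le N}$, for which $\sum_{j\le n}b_j^{*,2}=\min(n,N)$ and the right side of \eqref{eq:theobstacle} is $\approx(\sum_{n\ge N}N^{p/2}n^{-1}\log^{-p/2}(n+1))^{1/p}$; comparing with the known sharp constant in Bochkarev's theorem (the $\log^{1/p^{\sharp}}$ there is the $\ell^\infty$ endpoint of our $\ell^{p}$ statement) closes the loop. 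The final sentence — that $\Theta_{2,p}$ is the \emph{smallest} r.i.\ space containing the Fourier coefficients of $L^{2,p}$ — is then immediate from $(ii)\Leftrightarrow(iii)$ together with the fact, to be checked, that $\norm{\cdot}_{\Theta_{2,p}}$ is itself an r.i.\ (quasi-)norm, which it is since it is built from $b^*$ and monotone averages, and it is a genuine norm because $p\ge 2$ makes $b\mapsto(\sum_j b_j^{*,2})^{1/2}$-type expressions subadditive up to constants by the triangle inequality in $\ell^2$ followed by $\ell^{p/2}$.
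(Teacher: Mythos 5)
Your proof of $(i)\Leftrightarrow(ii)$ is essentially the paper's argument (both weighted norms are right-admissible and agree on decreasing functions, so Theorem \ref{theorem:mainabstract} makes them interchangeable). The step $(i)\Leftrightarrow(iii)$, however, contains a genuine gap: you try to apply Corollary \ref{coro:optiHL} directly, but that corollary finds the \emph{largest domain} $Y$ for a \emph{fixed target weight} $u$ (with $u$ radial non-increasing), whereas Theorem \ref{th:52} asks for the \emph{smallest target} $X$ for a \emph{fixed domain} $\|f\|_{2,p}$. These are dual problems; the corollary does not compute the optimal target, it computes the optimal domain. Concretely, to force \eqref{ineq:2p1} into the form $\|u\widehat f\,\|_q\lesssim\|f\|_Y$ you effectively move the weight $|x|^{1/p^\sharp}$ from the function side to the coefficient side, but that weight $|n|^{1/p^\sharp}$ is radial non-\emph{de}creasing, so the hypothesis of Corollary \ref{coro:optiHL} fails (its decreasing rearrangement is not $t^{1/p^\sharp}\mathbbm 1_{[1,\infty)}$ — an increasing unbounded sequence has no finite decreasing rearrangement). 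The paper's route is precisely to dualize first: Lemma \ref{lemma:duality} turns \eqref{ineq:2p2} into $\|\sum a_n e^{2\pi inx}\|_{2,p'}\lesssim\|a\|_{X'}$ with $p'<2$ and the weight now on the Fourier-series side and decreasing; Theorem \ref{corollary:HL1} (which is Corollary \ref{coro:optiHL}, item (i), for $q=p'<2$, i.e.\ the Kř{e}pela-type formula) identifies the largest such $X'$ as $\Gamma_{2,p'}$; and a separate discretization argument shows $(\Gamma_{2,p'})'=\Theta_{2,p}$. Your proposal skips the duality step entirely.

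As a result, the ``main obstacle'' identity \eqref{eq:theobstacle} is not correct: its left-hand side is a \emph{linear} Hardy average of $f^*$ (which, after your substitutions, is essentially the $L^{2,p}$ norm again), while the right-hand side is the \emph{quadratic} $\Theta_{2,p}$ functional. Testing $f^*(t)=t^{-1/2}\log^{-\alpha}(e/t)$ or $b_j^*=\mathbbm 1_{j\le N}$ shows the two sides have different asymptotics, and indeed they must differ since $\Theta_{2,p}\supsetneq\ell^{2,p}$ for $p>2$ (this is the whole content of Herz/Bochkarev). The $\log$-weight in $\Theta_{2,p}$ does not come from a Cauchy--Schwarz/Hölder regrouping of a linear Hardy inequality; it comes from the $\xi(t)$ correction term in Corollary \ref{coro:optiHL}(i) applied to the dual problem at $q=p'<2$. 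Finally, note the paper also proves the ``easy'' direction $\|\widehat f(n)\|_{\Theta_{2,p}}\lesssim\|f\|_{2,p}$ by hand (via Corollary \ref{coro:method} and a Hardy inequality), which you would also need before concluding $(ii)\Leftrightarrow(iii)$; a duality bound alone only gives one containment.
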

For the proof, it is convenient to first obtain the dual result, that is, determining the largest r.i.
sequence space whose Fourier series belong to
$L^{2,q}$ for $q<2$,
a result that is interesting in its own right.

\begin{theorem}
\label{corollary:HL1}
    Let $1\leq q<2$ and assume that $Y$ is a rearrangement-invariant function space. Then,  the following are equivalent:
    \begin{enumerate}[label=(\roman*)]
        \item for any sequence $a\in \ell^1$, the inequality \begin{equation}
    \label{ineq:2q1}
        \norm{ |x|^{-\frac{1}{q^{\sharp}}}\sum_{n\in \mathbb{Z}} a_n e^{2 \pi i n x}}_{q}\lesssim \norm{a}_{Y}
    \end{equation} holds;
        
        \item for any sequence $a\in \ell^1$, the inequality 
        \begin{equation}
    \label{ineq:2q2}
        \norm{\sum_{n\in \mathbb{Z}} a_n e^{2 \pi i n x}}_{{2,q}} \lesssim \norm{a}_{Y}
    \end{equation} holds;
    \item for any sequence $a$, the inequality
    \begin{equation*}
        \left(\sum_{n=1}^\infty \left(\sum_{j=n}^\infty a_j^{**, 2}\right)^{\frac{q}{2}} \frac{1}{n \log^{\frac{q}{2}}(n+1)}\right)^{\frac{1}{q}} =:\norm{a}_{\Gamma_{2,q}}\lesssim_q \norm{a}_Y
    \end{equation*} holds. 
    \end{enumerate}

\end{theorem}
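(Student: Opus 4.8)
The plan is to prove Theorem \ref{corollary:HL1} by setting up the equivalences around the "intermediate" Hardy-type expression $\norm{a}_{\Gamma_{2,q}}$, and then deriving Theorem \ref{th:52} by duality. First I would note that $\eqref{ineq:2q1}\iff\eqref{ineq:2q2}$ is essentially the statement that the weight $|x|^{-1/q^\sharp}$ generates the Lorentz norm $L^{2,q}$: indeed $\norm{|x|^{-1/q^\sharp}g}_q\approx_q\norm{g}_{2,q}$ for $g$ a function on $\mathbb T$, since $\frac1{q^\sharp}=\frac12-\frac1q$ gives $\norm{|x|^{-1/q^\sharp}g}_q^q\approx\int_0^1 (t^{-1/q^\sharp}g^*(t))^q\,dt=\int_0^1(t^{1/2}g^*(t))^q\,t^{-1}dt\approx\norm{g}_{2,q}^q$. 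So the content is the equivalence of these with $\eqref{ineq:2q3}$, i.e. with $\norm{\cdot}_{\Gamma_{2,q}}$.

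For this, I would invoke Theorem \ref{theorem:mainabstract} together with Corollary \ref{coro:method}, in the Fourier series setting described in Subsection \ref{section5.2} (Remark after Corollary \ref{coro:optiHL}, item (i)): the inequality $\norm{|x|^{-1/q^\sharp}\widehat a}_q\lesssim\norm a_Y$ for $a\in\ell^1$ is equivalent, for right-admissible $Y$ (and any r.i. $Y$ is right-admissible), to $\norm{F^{\circ}u}_q\lesssim\norm{G^\circ}_Y$ for all $F\prec G$, with $u=|x|^{-1/q^\sharp}\mathbbm 1_{[-1/2,1/2]}$; equivalently this is the computation of $\widetilde X$ for $X$ the weighted $L^q$-space with that weight. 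Now I apply Corollary \ref{coro:optiHL} with $q<2$ and $u=u^*$ equal (up to the cutoff) to $t\mapsto t^{-1/q^\sharp}$ on $(0,1)$. One computes $U(t)=\int_0^t u^{*,q}=\int_0^t s^{-q/q^\sharp}ds$; since $q/q^\sharp=q(\frac12-\frac1q)=\frac q2-1<1$ this is finite, $U(t)\approx_q t^{1-q/q^\sharp}=t^{q/2}$ for $t\le 1$ and $U(t)=U(1)$ for $t\ge1$. Similarly $\int_t^\infty u^{*,q^\sharp}=\int_t^1 s^{-1}ds=\log(1/t)$ for $t\le1$, so $\xi(t)=U(t)+t^{q/2}(\log(1/t))^{q/q^\sharp}\approx_q t^{q/2}(1+\log_+(1/t))^{q/q^\sharp}$ for $t\le1$. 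Plugging these into \eqref{eq:kgoptimal}, after the substitution $t\mapsto$ integer index $n$ (recall for sequences $\int_0^{r^{-1}}f^*$ becomes $\int_0^{r^{-1}}\sum_n a_n^*\mathbbm 1_{[n,n+1]}$, so $\int_0^t(\int_0^{r^{-1}}a^*)^2dr\approx\sum_{n\le t}(\frac1n\sum_{j\ge n}a_j^*)^2$ — but I need the \emph{tail} form, which requires taking some care: the expression $\int_0^{r^{-1}}f^*$ with $f$ the even rearrangement on $\mathbb T$ truncated... ), the exponents collapse: $U^{q^\sharp/2}\xi^{-q^\sharp/2}t^{-q/2}u^{*,q}\,dt\approx (1+\log_+(1/t))^{-q^\sharp/2\cdot q/q^\sharp}\cdot t^{-q/2}\cdot t^{-q/q^\sharp}\,dt/t\cdot t$... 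I would carefully track that this reduces, after the change of variables $t=1/n$, to the weight $\frac1{n\log^{q/2}(n+1)}$ appearing in $\norm{\cdot}_{\Gamma_{2,q}}$, and that $\int_0^t(\int_0^{r^{-1}}a^*)^2dr$ becomes $\sum_{j\ge n}a_j^{**,2}$. This bookkeeping — matching the continuous weighted-$L^q$ formula of Corollary \ref{coro:optiHL} with the discrete $\Gamma_{2,q}$ weight, and getting the inner Hardy operator into the tail form $\sum_{j\ge n}a_j^{**,2}$ rather than $\int_0^t(\cdots)^2$ — is where I expect the real work to lie; it is the main obstacle, though purely computational once the dictionary between the function and sequence settings is fixed.

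For Theorem \ref{th:52} I would argue by duality. The equivalence $\eqref{ineq:2p1}\iff\eqref{ineq:2p2}$ is again just $\norm{|x|^{1/p^\sharp}f}_p\approx_p\norm f_{2,p}$, which holds since $\frac1{p^\sharp}=\frac12-\frac1p$ for $p>2$. Now by Lemma \ref{lemma:duality}, the inequality $\norm{\widehat f(n)}_X\lesssim\norm{|x|^{1/p^\sharp}f}_p$ holding for all $f\in L^1$ is equivalent to its associate-space version $\norm{|x|^{-1/p^\sharp}\widehat a}_{p'}\lesssim\norm a_{X'}$ for all $a\in\ell^1$ — here I use that $X$ is an r.i. function space so $X=X''$ and that the associate of the weighted $L^p$ norm $\norm{|x|^{1/p^\sharp}\cdot}_p$ is $\norm{|x|^{-1/p^\sharp}\cdot}_{p'}$. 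With $q=p'<2$ and $\frac1{q^\sharp}=\frac12-\frac1q=\frac1p-\frac12=-\frac1{p^\sharp}$... hmm, signs: for $p>2$, $p'<2$, so $\frac1{(p')^\sharp}=|\frac12-\frac1{p'}|=\frac1{p'}-\frac12=\frac12-\frac1p=\frac1{p^\sharp}$. Thus $\norm{|x|^{-1/p^\sharp}\widehat a}_{p'}=\norm{|x|^{-1/(p')^\sharp}\widehat a}_{p'}$, which is exactly the left side of \eqref{ineq:2q1} with $q=p'$. By the already-proved Theorem \ref{corollary:HL1}, this is equivalent to $\norm a_{\Gamma_{2,p'}}\lesssim_p\norm a_{X'}$ for all $a$, i.e. to the continuous embedding $X'\hookrightarrow\Gamma_{2,p'}$. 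It then remains to identify the associate (Köthe dual) of $\Gamma_{2,p'}$ as $\Theta_{2,p}$, so that $X'\hookrightarrow\Gamma_{2,p'}$ dualizes back to $\Theta_{2,p}\hookrightarrow X''=X$, which is \eqref{ineq:2p3}. For this last identification I would use the known duality formulas for Lorentz–Sobolev / $\Gamma$-type spaces (e.g.\ \cite{gogpick}, as cited in the Morrey remark, or a direct Hardy-inequality computation): the space $\norm a_{\Gamma_{2,p'}}=\bigl(\sum_n(\sum_{j\ge n}a_j^{**,2})^{p'/2}\frac1{n\log^{p'/2}(n+1)}\bigr)^{1/p'}$ has associate given by replacing the decreasing-tail Hardy operator with the increasing-head one and conjugating exponents, yielding precisely $\bigl(\sum_n(\sum_{j\le n}b_j^{*,2})^{p/2}\frac1{n\log^{p/2}(n+1)}\bigr)^{1/p}=\norm b_{\Theta_{2,p}}$. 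Finally the "in consequence" sentence follows: any r.i. $X$ containing all Fourier coefficients of $L^{2,p}$ satisfies \eqref{ineq:2p2} by the closed graph theorem, hence \eqref{ineq:2p3}, i.e. $\Theta_{2,p}\hookrightarrow X$; and $\Theta_{2,p}$ itself works by \eqref{ineq:2p3}$\Rightarrow$\eqref{ineq:2p2}, so it is the smallest.

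One technical point I would be careful about throughout: the passage between the function-on-$\mathbb T$ picture and the sequence picture in Corollary \ref{coro:optiHL} and in Lemma \ref{lemma:kfunc}, specifically how $\int_0^{t^{-1}}f^*$ is interpreted (via the stairstep extension $\sum_n f^*(n)\mathbbm 1_{[n,n+1]}$) and how the outer integral $\int_0^\infty(\cdots)\,dt$ against the weight $U^{q^\sharp/2}u^{*,q}\xi^{-q^\sharp/2}t^{-q/2}$ restricted by $u=\mathbbm 1_{[0,1]}$-type cutoffs becomes a sum over $n\ge1$; getting the $\log$ powers and the $1/n$ weight exactly right is the crux, and I would double-check it against Bochkarev's endpoint estimate (the $\sup_n$ version) by noting that \eqref{eq:sharpboch} trivially implies Bochkarev's bound, as it must.
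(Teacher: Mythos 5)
The main computation — invoking Theorem \ref{theorem:mainabstract} and the discrete version of Corollary \ref{coro:optiHL} with $u(t)=t^{-1/q^\sharp}\mathbbm 1_{[0,1]}$, computing $U(t)\approx t^{q/2}$ and $\xi(t)\approx t^{q/2}(1+|\log t|)^{q/q^\sharp}$, watching the $t$-powers cancel in $U^{q^\sharp/2}\xi^{-q^\sharp/2}u^{*,q}t^{-q/2}$ to leave $(1+\log(1/t))^{-q/2}t^{-1}$, and then converting $\int_0^t(\int_0^{r^{-1}}a^*)^2\,dr$ at $t=1/n$ into the tail sum $\sum_{j\ge n}a^{**,2}_j$ — is exactly the paper's proof of $(i)\iff(iii)$, and your bookkeeping here is right.

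However, your argument for $(i)\iff(ii)$ contains a genuine error. You claim the pointwise norm equivalence $\norm{|x|^{-1/q^\sharp}g}_q \approx_q \norm{g}_{2,q}$ for general $g$ on $\mathbb{T}$. Only the direction $\norm{|x|^{-1/q^\sharp}g}_q \lesssim \norm{g}_{2,q}$ holds, via the Hardy--Littlewood rearrangement inequality, because the weight $|x|^{-1/q^\sharp}=|x|^{1/2-1/q}$ is decreasing for $q<2$. The reverse fails: take $g=\mathbbm 1_{[1/2-\varepsilon,1/2]}$; then $\norm{g}_{2,q}^q\approx\varepsilon^{q/2}$ while $\norm{|x|^{-1/q^\sharp}g}_q^q\approx\varepsilon$, and since $q/2<1$ the ratio blows up as $\varepsilon\to0$. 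In particular $\widehat{a}$ is not non-increasing, so you cannot pass freely between the two left-hand sides in \eqref{ineq:2q1} and \eqref{ineq:2q2}. The paper instead derives $(i)\iff(ii)$ from Theorem \ref{theorem:mainabstract} and Corollary \ref{coro:method}: both conditions are reduced (for right-admissible $Y$) to a statement about $F\prec G$ involving only the rearrangement $F^\circ$, and for non-increasing $F^\circ$ the two norms $\norm{F^\circ u}_q$ and $\norm{F^\circ}_{2,q}$ do agree. This is the same mechanism you already invoke for $(i)\iff(iii)$, so the fix costs nothing; but as written your $(i)\iff(ii)$ step would not survive scrutiny.
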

\begin{remark}\label{starvstwostars}
    Note that, for $1\leq q<2$,
   \begin{equation}
    \label{eq:starvstwostars}
    \norm{a}_{\Gamma_{2,q}}\approx_q \left(\sum_{n=1}^\infty \left(\sum_{j=n}^\infty a_j^{*, 2}\right)^{\frac{q}{2}} \frac{1}{n \log^{\frac{q}{2}}(n+1)}\right)^{\frac{1}{q}}.   \end{equation}
    Likewise, the norm $\norm{b}_{ {\Theta}_{2,p
        }}$ can be defined replacing 
        $b_j^{*,2 }$ by $b_j^{**,2}$ in \eqref{ineq:2p3}.
\end{remark}
\begin{proof}[Proof of Theorem \ref{corollary:HL1}]
% By    Theorem
% \ref{theorem:mainabstract} with $\beta=1$
% and $X=L^q(|x|^{-1/q^{\sharp}})$,
% \eqref{ineq:2q1} is equivalent to 
% \eqref{ineq:2q2} and also to
% $$\norm{F}_X\lesssim \norm{A}_Y$$ for any $F\prec A$.
Note that the equivalence $(i) \iff (ii)$ follows from Theorem \ref{theorem:mainabstract} and Corollary \ref{coro:method}.

 The proof of $(i) \iff (iii)$ follows from the discrete version of Corollary \ref{coro:optiHL}, which, after observing that $u(t)=t^{-\frac{1}{q^{\sharp}}} \mathbbm{1}_{[0,1]}$, $U(t)\approx t^{\frac{q}{2}}$ and $\xi(t)\approx t^{\frac{q}{2}} (1+ |\log(t)|^{\frac{q}{q^{\sharp}}})$, yields
    \begin{equation*}
 \left(\sum_{n=1}^\infty \left(\sum_{j=n}^\infty a_j^{**, 2}\right)^{\frac{q}{2}} \frac{1}{n \log^{\frac{q}{2}}(n+1)}\right)^{\frac{1}{q}} \lesssim_q \norm{a}_Y.\end{equation*}
\end{proof}
\begin{proof}[Proof of Theorem \ref{th:52}]
Once again, the equivalence $(i) \iff (ii)$ follows from Theorem \ref{theorem:mainabstract} and Corollary \ref{coro:method}.

\begin{comment}
To begin with, the fact that inequality \eqref{ineq:2p2} holds for $X=\Theta_{2,p}$ is well known (?). Besides, \eqref{ineq:2p2} implies 
\eqref{ineq:2p1} by rearrangement inequalities.
   Now assume that inequality \eqref{ineq:2p1} holds, then
    for any sequence $a$
    \begin{eqnarray*}
        \norm{ |x|^{-\frac{1}{p^{\sharp}}}\sum_{n\in \mathbb{Z}} a_n e^{2 \pi i n x}}_{p'}&=&\sup_{\norm{|x|^{\frac{1}{p^{\sharp}}}f(x)}_p=1} \int_0 ^1 \overline{f(x)} \sum_{n \in \mathbb{Z}} a_n e^{2 \pi i nx} dx \\
                &=&\sum_{n \in \mathbb{Z}} \widehat{f}(n) a_n \leq \norm{\widehat{f}(n)}_X \norm{a}_{X'} \lesssim \norm{a}_{X'},
    \end{eqnarray*} where $X'$ is the associate space of $X$, which has norm $\norm{a}_{X'}:= \sup_{\norm{b}_X=1}\sum_{n \in \mathbb{Z}} b_n a_n.$

    Since $X$ is rearrangement-invariant, so is $X'$. Hence, by Corollary \ref{corollary:HL1},
    $\norm{a}_{\Gamma_{2,p'}} \lesssim \norm{a}_{X'}$ and, equivalently, $\norm{b}_{\Gamma'_{2,p'}} \gtrsim \norm{b}_{X}$. Therefore, it suffices to show that
    $$\sum_{n=1}^\infty a_n^* b_n^* \lesssim \norm{a}_{\Gamma_{2,p'}} \norm{b}_{\Theta_{2,p}}.$$
\end{comment}

We now show that $(ii)\iff (iii)$.
First, we prove the inequality
\begin{equation}
\label{eq:step20}
    \norm{\widehat{f}(n)}_{\Theta_{2,p}} \lesssim_p  \norm{f}_{{2,p}}.\end{equation}   By Corollary \ref{coro:method}, it is enough to show that for $f$ supported on $[0,1]$ one has
\begin{equation*}
\left(\int_1^\infty \left(\int_0 ^x \left(\int_0 ^{t^{-1}} f^*\right)^{2}\right)^{\frac{p}{2}}\frac{dx}{x \log^{\frac{p}{2}}(x+1)}\right)^{\frac{1}{p}} \lesssim_p  \left(\int_0 ^1 (f^{**}(t) t^{\frac{1}{2}})^p \frac{dt}{t}\right)^{\frac{1}{p}},
\end{equation*}  or, equivalently, that both estimates 
\begin{equation*}
f^{**}(1)\left(\int_{1}^\infty \frac{dt}{t \log^{\frac{p}{2}}(t+1)}\right)^{\frac{1}{p}} \lesssim_p \left(\int_0 ^1 (f^{**}(t) t^{\frac{1}{2}})^p \frac{dt}{t}\right)^{\frac{1}{p}}
\end{equation*} and \begin{equation*}
\left(\int_1^\infty \left(\int_{t^{-1}} ^1  f^{**,2}\right)^{\frac{p}{2}} \frac{dt}{t \log^{\frac{p}{2}}(t+1)}\right)^{\frac{1}{p}} \lesssim_p  \left(\int_0 ^1 (f^{**}(t) t^{\frac{1}{2}})^p \frac{dt}{t}\right)^{\frac{1}{p}}
\end{equation*} hold. The former holds because $f^{**}(1)\approx f^{**}(1/2)$. To see that the latter holds, observe that by Hardy's inequality (Lemma \ref{theorem:cont hardy}), for any positive $g$,
\begin{equation*}
\left(\int_1^\infty \left(\int_{t^{-1}} ^1  g\right)^{\frac{p}{2}} \frac{dt}{t \log^{\frac{p}{2}}(t+1)}\right)^{\frac{2}{p}} \lesssim_p  \left(\int_0 ^1 (g(t) t)^{\frac{p}{2}} \frac{dt}{t}\right)^{\frac{2}{p}}.
\end{equation*}
In conclusion, inequality \eqref{eq:step20} holds.

Second, by Lemma \ref{lemma:duality}, in light of the relation $(L^{2,p})'= L^{2,p'}$, any $X$ for which the inequality
$$\norm{\widehat{f}(n)}_X\lesssim \norm{f}_{{2,p}}$$ holds must satisfy
$$ \norm{\sum_{n \in \mathbb{Z}} a_n e^{2 \pi i n x}}_{{2,p'}}\lesssim \norm{a}_{X'},$$ which, by Theorem \ref{corollary:HL1}, implies that $\norm{\cdot}_{\Gamma_{2,p'}}\lesssim_p\norm{\cdot}_{X'}$. 
Hence, the result will follow if we prove that $\norm{\cdot}_{\Gamma_{2,p'}}\approx_p\norm{\cdot}_{(\Theta_{2,p})'}.$ Observe that  inequality \eqref{eq:step20}
 yields $\norm{\cdot}_{\Gamma_{2,p'}}\lesssim_p\norm{\cdot}_{(\Theta_{2,p})'}$, so 
it suffices to prove the reverse inequality.

   Third, to see that $\norm{\cdot}_{\Gamma_{2,p'}}\gtrsim_p\norm{\cdot}_{(\Theta_{2,p})'}$ we use the following equivalent descriptions of the norms of $\Gamma_{2,p'}$ and $\Theta_{2,p}$, which follow from Hardy's inequality: let $(y_k)_{k \geq 0}$ be an increasing sequence such that $y_0=1$ and $\log(y_k) = 2^k$ for $k\geq 1$. Then it is easy to see that

    $$\norm{a}_{\Gamma_{2,p'}} \approx_p  \left(\sum_{k=0} ^\infty 2^{k(1-\frac{p'}{2})} \left(\sum_{j=y_k}^{y_{k+1}} a_j^{**,2}\right)^{\frac{p'}{2}}\right)^{\frac{1}{p'}}$$ and
    $$\norm{b}_{\Theta_{2,p}} \approx_p  \left(\sum_{k=0} ^\infty 2^{k(1-\frac{p}{2})} \left(\sum_{j=y_k}^{y_{k+1}} b_j^{*, 2}\right)^{\frac{p}{2}}\right)^{\frac{1}{p}}.$$
    Hence,
    $$\sum_{n=1}^\infty a_n^* b_n^* \ \leq  \sum_{k=0}^\infty \left(\sum_{j=y_k}^{y_{k+1}} a_j^{*, 2}\right)^{\frac{1}{2}} \left(\sum_{j=y_k}^{y_{k+1}} b_j^{*, 2}\right)^{\frac{1}{2}} \lesssim_p  \norm{a}_{\Gamma_{2,p'}} \norm{b}_{\Theta_{2,p}}.  $$

    This shows that $\Theta_{2,p}= \left(\Gamma_{2,p'}\right)'$. The proof is now complete.
\end{proof}

\begin{proof}[Proof of Remark \ref{starvstwostars}]
Equivalence \eqref{eq:starvstwostars}
follows from the proof of Theorem \ref{th:52}.
The second part of the remark follows by Hardy's inequality, using Lemma
\ref{theorem:cont hardy2} with $\mathfrak{p}=\mathfrak{q}=2$.
\end{proof}

\subsection{Weighted Lorentz spaces}
Let $0<p\leq \infty $ and $u,v$ be  non-negative weight  functions. Define the weighted Lorentz spaces by
\begin{equation*}
    \norm{f}_{\Lambda_p(u)}:=\norm{uf^*}_p
\end{equation*}
and 
\begin{equation*}
    \norm{f}_{\Gamma_p(v)}:=\norm{vf^{**}}_p.
\end{equation*} In \cite[Corollary 5.3]{sinnamon2003} it is proved that, for $p\leq 2$, the Fourier transform is bounded from $\Gamma_p(v)$ to $\Lambda_2(u)$ if and only if any operator of joint strong type $(1,\infty;2,2)$ is also bounded from $\Gamma_p(v)$ to $\Lambda_2(u)$. 

Theorem \ref{theorem:mainabstract} allows us 
to characterize  the   boundedness of the Fourier transform in the reverse direction for any $p$ and $q.$ 
Indeed, since $\Gamma_q(u)$ and $\Lambda_p(v)$ are left- and right-admissible, respectively,  we deduce

%We note, however, that Theorem \ref{theorem:mainabstract} does not apply here, because  $\norm{f}_{\Lambda_2(u)}$ need not be left-admissible for a $u$ which is not non-increasing. On the other hand, since $\Gamma_q(u)$ and $\Lambda_p(v)$ are left- and right-admissible, respectively, from Theorem \ref{theorem:mainabstract} we deduce 
\begin{corollary}
    Let $0< p,q\leq \infty$. Then, the Fourier transform is bounded from $\Lambda_p(v)$ to $\Gamma_q(u)$ if and only if any operator of joint strong type $(1,\infty;2,2)$ is also bounded.
\end{corollary}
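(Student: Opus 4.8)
The plan is to deduce the corollary from the implication $(i)\Rightarrow(ii)$ of Theorem \ref{theorem:mainabstract}. One direction requires nothing: by Plancherel's theorem $\|\widehat f\,\|_{2}=\|f\|_{2}$ and trivially $\|\widehat f\,\|_{\infty}\le\|f\|_{1}$, so the Fourier transform is itself a sublinear operator of joint strong type $(1,\infty;2,2)$; hence if every such operator is bounded from $\Lambda_p(v)$ to $\Gamma_q(u)$, then in particular the Fourier transform is. For the converse the idea is to recognise the estimate $\|\widehat f\,\|_{\Gamma_q(u)}\le C\,\|f\|_{\Lambda_p(v)}$ as an instance of \eqref{ineq:pittXY}. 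To this end it suffices to exhibit $0<\beta\le1$, a left-admissible functional $\|\cdot\|_X$, and a right-admissible functional $\|\cdot\|_Y$ for which $\||g|^{\beta}\|_X^{1/\beta}=\|g\|_{\Gamma_q(u)}$ and $\|g\|_Y=\|g\|_{\Lambda_p(v)}$ hold for all $g$; then $(i)\Rightarrow(ii)$ of Theorem \ref{theorem:mainabstract} immediately gives $\|Tf\|_{\Gamma_q(u)}\lesssim_{p,q}\|f\|_{\Lambda_p(v)}$ for every sublinear $T$ of joint strong type $(1,\infty;2,2)$, which is the claim.

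The right-admissibility of $\|\cdot\|_{\Lambda_p(v)}$, with $Y=\Lambda_p(v)$, holds for all $0<p\le\infty$ and is elementary. If $m\in L^{\infty}$ then $|mG|\le\|m\|_{\infty}|G|$ pointwise, hence $(mG)^{*}\le\|m\|_{\infty}G^{*}$ and so $\|mG\|_{\Lambda_p(v)}=\|v\,(mG)^{*}\|_p\le\|m\|_{\infty}\,\|v\,G^{*}\|_p=\|m\|_{\infty}\|G\|_{\Lambda_p(v)}$, which is property (i); property (ii) is in fact an equality, since $G^{\circ}$ is by definition equimeasurable with $|G|$, whence $(G^{\circ})^{*}=G^{*}$ and $\|G^{\circ}\|_{\Lambda_p(v)}=\|G\|_{\Lambda_p(v)}$.

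For $\|\cdot\|_{\Gamma_q(u)}$ I would distinguish two cases. If $q\ge1$, take $\beta=1$ and $X=\Gamma_q(u)$: since $F\mapsto F^{**}$ is subadditive and monotone, the functional $F\mapsto\|u\,F^{**}\|_q$ is a rearrangement-invariant Banach function norm, which moreover has the Fatou property by monotone convergence; hence $X=X''$ by the Lorentz--Luxemburg theorem, and property (ii) of left-admissibility follows since the associate of a rearrangement-invariant space is rearrangement-invariant and $h^{\circ}$ is equimeasurable with $h$. Thus $\Gamma_q(u)$ is left-admissible, exactly as recorded in the Remark following Theorem \ref{theorem:mainabstract}. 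If $0<q<1$, I would invoke the $\beta$-mechanism of that Remark with $\beta=q$: using $(g^{*})^{q}=(|g|^{q})^{*}$ one checks that $\|g\|_{\Gamma_q(u)}=\||g|^{q}\|_X^{1/q}$, where $\|h\|_X:=\int_0^{\infty}u(t)^{q}\Big(\tfrac1t\int_0^{t}h^{*}(s)^{1/q}\,ds\Big)^{q}dt$, and it remains to verify that this $\|\cdot\|_X$ is left-admissible.

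The main obstacle is precisely this last verification when $0<q<1$: here $\Gamma_q(u)$ is merely quasi-normed, so $X=X''$ is not automatic, and one has to compute the associate and biassociate of $X$ by hand. I expect this to go through by first applying the reduction theorems for the Hardy average acting on non-increasing functions (in the spirit of Subsection \ref{subsec:hardy}) to rewrite $\|\cdot\|_X$ as an equivalent weighted Lorentz endpoint functional $\|\cdot\|_{\Lambda_q(\widetilde u)}$, and then using the level-function description of such spaces to see that it coincides with its biassociate; degenerate weights $u$, for which $\Gamma_q(u)$ is trivial and the statement vacuous, are discarded from the outset. Once left- and right-admissibility are established, the corollary follows by applying Theorem \ref{theorem:mainabstract} exactly as in the proof of Corollary \ref{coro:optiHL}; I would also note that the same argument, run in the opposite direction, recovers the result of \cite{sinnamon2003}.
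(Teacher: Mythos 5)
Your overall reduction is exactly the paper's: the easy direction is that the Fourier transform is itself sublinear of joint strong type $(1,\infty;2,2)$, and the hard direction is $(i)\Rightarrow(ii)$ of Theorem~\ref{theorem:mainabstract} applied with $X$ built from $\Gamma_q(u)$ and $Y=\Lambda_p(v)$. Your verification that $\Lambda_p(v)$ is right-admissible and that $\Gamma_q(u)$ is left-admissible when $q\ge1$ is correct.

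The gap is the step you flag yourself. For $0<q<1$ you correctly identify $\beta=q$ and the candidate $\|h\|_X=\int_0^\infty u(t)^q\bigl(\tfrac1t\int_0^t h^{*,1/q}\bigr)^q\,dt$ with $\|g\|_{\Gamma_q(u)}=\||g|^q\|_X^{1/q}$, but the sketch for $X=X''$ — rewrite $\|\cdot\|_X$ as an equivalent $\Lambda_q(\widetilde u)$-functional via the Hardy reduction theorems and invoke level functions — does not go through. A $\Gamma_q$-type functional is equivalent to a $\Lambda_q$-type functional only under a $B_q$-type condition on the weight: with $u^q=\mathbbm{1}_{[0,1]}$, for instance, $\|\mathbbm{1}_{[0,\varepsilon]}\|_{\Gamma_q(u)}^q\approx\varepsilon^q$ as $\varepsilon\to0$, while any $\Lambda_q(\widetilde u)$ with $\widetilde u^q$ locally integrable gives $O(\varepsilon)$, so no $\widetilde u$ works; and this is not a degenerate weight. (Moreover, even when a $B_q$-type condition does hold, $\Lambda_q(\widetilde u)$ with $q<1$ is itself only quasi-normed, so the equivalence alone would not settle $X=X''$.)

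A direct argument closes the gap and is simpler: show that $\|\cdot\|_X$ is a Banach function norm with the Fatou property, and then $X=X''$ by Lorentz--Luxemburg; rearrangement invariance gives property (ii) for free. Monotonicity, homogeneity, and Fatou are clear. For subadditivity, first note that $\|\cdot\|_X$ is monotone under Hardy--Littlewood--Pólya majorization: if $\int_0^t h_1^*\le\int_0^t h_2^*$ for all $t$, then since $s\mapsto s^{1/q}$ is convex, increasing, and vanishes at $0$, also $\int_0^t h_1^{*,1/q}\le\int_0^t h_2^{*,1/q}$ for all $t$, whence $\|h_1\|_X\le\|h_2\|_X$. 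Since $\int_0^t(h_1+h_2)^*\le\int_0^t(h_1^*+h_2^*)$, this gives $\|h_1+h_2\|_X\le\|h_1^*+h_2^*\|_X$. Then the triangle inequality in $L^{1/q}(0,t)$ (valid because $1/q\ge1$) yields
\begin{equation*}
\Bigl(\int_0^t (h_1^*+h_2^*)^{1/q}\Bigr)^q\le\Bigl(\int_0^t h_1^{*,1/q}\Bigr)^q+\Bigl(\int_0^t h_2^{*,1/q}\Bigr)^q,
\end{equation*}
so $\|h_1^*+h_2^*\|_X\le\|h_1\|_X+\|h_2\|_X$.

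Finally, the closing remark that the same argument, run in the opposite direction, recovers \cite{sinnamon2003} is not correct: that result has $\Lambda_2(u)$ as the \emph{target}, and $\Lambda_q(u)$ is not left-admissible in general (it is a Banach function norm only under a $B_q$-type condition). The paper explicitly notes that Sinnamon's result and the present corollary are about opposite directions of the $\Lambda$--$\Gamma$ scale, and Theorem~\ref{theorem:mainabstract} only handles the one stated here.
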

Here we note,
by Theorem \ref{theorem:mainabstract}
and Corollary \ref{coro:method}, 
that the 
characterization of the optimal right-admissible space $Y$ such that 
$$\norm{\widehat{f}\,}_{\Gamma_q(u)}\lesssim \norm {f}_{Y}$$ 
is given by
$$\norm{f}_Y:=K(f),$$ where $K(f)$ is the smallest constant for which the inequality
$$
\left(\int_0^\infty u^{*,q}(x) \left(\frac{1}{x}\int_0^x F^{*} \right)^q dx \right)^\frac{1}{q}
%=\norm{F^{*}}_{\Gamma_q(u)}
\leq K(f) \sup _{x>0} \frac{\left(\int_0 ^x F^{*,2}\right)^\frac{1}{2}}{\left(\int_0 ^x \left(\int_0 ^{t^{-1}} f^*\right)^2 dt\right)^\frac{1}{2}}$$ holds for any $F$.

\begin{comment}

Here we recall  the characterization of 
the optimal right-admissible space $Y$ such that 
$$\norm{\widehat{f}\,}_{\Gamma_q(u)}\lesssim \norm {f}_{Y}$$ 
%is valid follows from  Theorem \ref{theorem:mainabstract}.
given by 
Corollary     \ref{coro:mainbis}:
$\norm{f}_{Y}= \sup \norm{T(f)}_{\Gamma_q(u)}$,
where the supremum is taken over all linear operators $T$ of joint strong type $(1,\infty;2,2)$.
Equivalently, 
$$\norm{f}_Y:=K(f),$$ where $K(f)$ is the smallest constant for which the inequality
$$
\left(\int_0^\infty u^{*,q}(x) \left(\frac{1}{x}\int_0^x F^{*} \right)^q dx \right)^\frac{1}{q}
%=\norm{F^{*}}_{\Gamma_q(u)}
\leq K(f) \sup _{x>0} \frac{\left(\int_0 ^x F^{*,2}\right)^\frac{1}{2}}{\left(\int_0 ^x \left(\int_0 ^{t^{-1}} f^*\right)^2 dt\right)^\frac{1}{2}}$$ holds for any $F$.

\end{comment}

Finally, the optimal space $Y$ for which the Fourier inequality 
$$\norm{\widehat{f}\,}_{\Lambda_q(u)}\lesssim \norm {f}_{Y}$$ holds for $0<q< \infty$ and non-increasing $u$
is described in 
Corollary \ref{coro:optiHL}, cf.
\cite[Lemma 2.3]{GRAFAKOS2022109295}.

\bibliographystyle{amsplain}
\bibliography{main}

\end{document}